\newlength{\templength}
\newlength{\templengtha}
\newlength{\templengthb}
\newlength{\templengthc}
\newlength{\templengthd}
\newlength{\templengths}
\newlength{\templengthw}
\definecolor{mathcolor}{RGB}{0,75,105} 
\definecolor{argcolor}{RGB}{0,105,50} 
\definecolor{speccolor}{RGB}{0,40,125} 
\newcommand{\dismath} [1] {{\color{mathcolor}#1}}
\newcommand{\distext} [1] {{\color{speccolor}#1}}
\newcommand{\argument}[1] {{\color{argcolor!50!black}{#1}}}
\setlist{midsep}
\newcommand{\defnotion}       [1]       {\distext{\textit{#1\/}}}
\newcommand{\remember}        [1]       {}
  \newcommand{\Rem}           [1]       {\remember{#1}}
\newcommand{\otuple}          [1]       {\dismath{\langle\argument{#1}\rangle}}
\newcommand{\clmodels}        [0]   
                                    {\mathrel{\dismath{|\hspace{-1.25pt}{\models}}}}
\newcommand{\nameKFrame}      [1]       {\dismath{\mathfrak{#1}}}       
\newcommand{\nameKModel}      [1]       {\dismath{\mathfrak{#1}}}       
\newcommand{\kframe}          [1]       {\nameKFrame{#1}}       
\newcommand{\kmodel}          [1]       {\nameKModel{#1}}       
\newcommand{\kFrame}          [1]       {\dismath{\bfrak{#1}}}       
\newcommand{\kModel}          [1]       {\dismath{\bfrak{#1}}}       
\newcommand{\logic}           [1]       {\dismath{\mathbf{#1}}}         
\newcommand{\lang}            [1]       {\dismath{\mathcal{#1}}}        
\newcommand{\sclass}          [1]       {\dismath{\mathscr{#1}}}        
  \newcommand{\scls}[1]{\sclass{#1}}
  \newcommand{\Scls}[1]{\Sclass{#1}}
\newcommand{\numbers}         [1]       {\dismath{\mathds{#1}}}
\newcommand{\numN}                      {\numbers{N}}
\newcommand{\numNp}                     {\numbers{N}^+}
\newcommand{\numZ}                      {\numbers{Z}}
\newcommand{\numQ}                      {\numbers{Q}}
\newcommand{\numR}                      {\numbers{R}}
\newcommand{\Pow}             [1]       {\dismath{\mathscr{P}(\argument{#1})}}  
\newcommand{\boldfrak}        [1]       {\boldsymbol{\frak{#1}}}
\newcommand{\bfrak}           [1]       {\boldfrak{#1}}
\newcommand{\implication}               {\to}
  \newcommand{\imp} {\implication}
\newcommand{\conjunction}             {\wedge}   
  \newcommand{\con} {\conjunction}
\newcommand{\disjunction}               {\vee}
  \newcommand{\dis} {\disjunction}
\newcommand{\equivalence}               {\leftrightarrow}
  \newcommand{\lra} {\equivalence}
\newcommand{\leftsquare}[1]{%
\begin{tikzpicture}[scale=#1]
\draw (0,0)--(0,1)--(1,1)--(1,0)--cycle;
\draw [fill, color=black!25] (0,0)--(0.5,0.5)--(0,1);
\draw (0,0)--(0.5,0.5)--(0,1);
\draw (0,1)--(0,0);
\end{tikzpicture}
}
\newcommand{\rightsquare}[1]{%
\begin{tikzpicture}[scale=#1]
\draw (0,0)--(0,1)--(1,1)--(1,0)--cycle;
\draw [fill, color=black!25] (1,0)--(0.5,0.5)--(1,1);
\draw (1,0)--(0.5,0.5)--(1,1);
\draw (1,1)--(1,0);
\end{tikzpicture}
}
\newcommand{\upsquare}[1]{%
\begin{tikzpicture}[scale=#1]
\draw (0,0)--(0,1)--(1,1)--(1,0)--cycle;
\draw [fill, color=black!25] (0,1)--(0.5,0.5)--(1,1);
\draw (0,1)--(0.5,0.5)--(1,1);
\draw (1,1)--(0,1);
\end{tikzpicture}
}
\newcommand{\downsquare}[1]{%
\begin{tikzpicture}[scale=#1]
\draw (0,0)--(0,1)--(1,1)--(1,0)--cycle;
\draw [fill, color=black!25] (0,0)--(0.5,0.5)--(1,0);
\draw (0,0)--(0.5,0.5)--(1,0);
\draw (1,0)--(0,0);
\end{tikzpicture}
}
\newcommand{\leftsq} {\mathop{\leftsquare {0.180}}}
\newcommand{\rightsq}{\mathop{\rightsquare{0.180}}}
\newcommand{\upsq}   {\mathop{\upsquare   {0.180}}}
\newcommand{\downsq} {\mathop{\downsquare {0.180}}}
\renewcommand{\iff}                     {\mathrel{\dismath{\Longleftrightarrow}}} 
\newcommand{\imply}                     {\mathrel{\dismath{\Longrightarrow}}}     
\newcommand{\bydef}                     {\mathrel{\dismath{\leftrightharpoons}}}  
\newcommand{\mref}{\@ifnextchar({\mref@i}{\mref@i({\Box},{p})}}
\def\mref@i(#1,#2){#1#2 \implication #2}
    \newcommand{\mrefp}{\@ifnextchar({\mrefp@i}{\mrefp@i({p})}}
    \def\mrefp@i(#1){\mref(\Box,#1)}
\newcommand{\FOref}{\@ifnextchar({\FOref@i}{\FOref@i({x},{P})}}
\def\FOref@i(#1,#2){\forall #1\,#2(#1,#1)}
    \newcommand{\FOrefp}{\@ifnextchar({\FOrefp@i}{\FOrefp@i({P})}}
    \def\FOrefp@i(#1){\FOref(x,#1)}
\newcommand{\FOrefi}{\@ifnextchar({\FOrefi@i}{\FOrefi@i({x},{P})}}
\def\FOrefi@i(#1,#2){\forall #1\,#1#2#1}
    \newcommand{\FOrefip}{\@ifnextchar({\FOrefip@i}{\FOrefip@i({P})}}
    \def\FOrefip@i(#1){\FOrefi(x,#1)}
\newcommand{\mtra}{\@ifnextchar({\mtra@i}{\mtra@i({\Box},{p})}}
\def\mtra@i(#1,#2){#1#2 \implication #1#1#2}
    \newcommand{\mtrap}{\@ifnextchar({\mtrap@i}{\mtrap@i({p})}}
    \def\mtrap@i(#1){\mtra(\Box,#1)}
\newcommand{\FOtra}{\@ifnextchar({\FOtra@i}{\FOtra@i({x},{y},{z},{P})}}
\def\FOtra@i(#1,#2,#3,#4){\forall #1\forall #2\forall #3\,(#4(#1,#2)\conjunction #4(#2,#3) \implication #4(#1,#3))}
    \newcommand{\FOtrap}{\@ifnextchar({\FOtrap@i}{\FOtrap@i({P})}}
    \def\FOtrap@i(#1){\FOtra(x,y,z,#1)}
\newcommand{\FOtrai}{\@ifnextchar({\FOtrai@i}{\FOtrai@i({x},{y},{z},{P})}}
\def\FOtrai@i(#1,#2,#3,#4){\forall #1\forall #2\forall #3\,(#1#4#2\conjunction #2#4#3 \implication #1#4#3)}
    \newcommand{\FOtraip}{\@ifnextchar({\FOtraip@i}{\FOtraip@i({P})}}
    \def\FOtraip@i(#1){\FOtrai(x,y,z,#1)}
\newcommand{\msym}{\@ifnextchar({\msym@i}{\msym@i({\Box},{\Diamond},{p})}}
\def\msym@i(#1,#2,#3){#3 \implication #1#2#3}
    \newcommand{\msymp}{\@ifnextchar({\msymp@i}{\msymp@i({p})}}
    \def\msymp@i(#1){\msym(\Box,\Diamond,#1)}
\newcommand{\FOsym}{\@ifnextchar({\FOsym@i}{\FOsym@i({x},{y},{P})}}
\def\FOsym@i(#1,#2,#3){\forall #1\forall #2\,(#3(#1,#2)\implication #3(#2,#1))}
    \newcommand{\FOsymp}{\@ifnextchar({\FOsymp@i}{\FOsymp@i({P})}}
    \def\FOsymp@i(#1){\FOsym(x,y,#1)}
\newcommand{\FOsymi}{\@ifnextchar({\FOsymi@i}{\FOsymi@i({x},{y},{P})}}
\def\FOsymi@i(#1,#2,#3){\forall #1\forall #2\,(#1#3#2 \implication #2#3#1)}
    \newcommand{\FOsymip}{\@ifnextchar({\FOsymip@i}{\FOsymip@i({P})}}
    \def\FOsymip@i(#1){\FOsymi(x,y,#1)}
\newcommand{\meuc}{\@ifnextchar({\meuc@i}{\meuc@i({\Box},{\Diamond},{p})}}
\def\meuc@i(#1,#2,#3){#2#3 \implication #1#2#3}
    \newcommand{\meucp}{\@ifnextchar({\meucp@i}{\meucp@i({p})}}
    \def\meucp@i(#1){\meuc(\Box,\Diamond,#1)}
\newcommand{\FOeuc}{\@ifnextchar({\FOeuc@i}{\FOeuc@i({x},{y},{z},{P})}}
\def\FOeuc@i(#1,#2,#3,#4){\forall #1\forall #2\forall #3\,(#4(#1,#2)\con #4(#1,#3)\implication #4(#2,#3))}
    \newcommand{\FOeucp}{\@ifnextchar({\FOeucp@i}{\FOeucp@i({P})}}
    \def\FOeucp@i(#1){\FOeuc(x,y,z,#1)}
\newcommand{\FOeuci}{\@ifnextchar({\FOeuci@i}{\FOeuci@i({x},{y},{z},{P})}}
\def\FOeuci@i(#1,#2,#3,#4){\forall #1\forall #2\forall #3\,(#1#4#2 \con #1#4#3 \implication #2#4#3)}
    \newcommand{\FOeucip}{\@ifnextchar({\FOeucip@i}{\FOeucip@i({P})}}
    \def\FOeucip@i(#1){\FOeuci(x,y,z,#1)}
\newcommand{\mser}{\@ifnextchar({\mser@i}{\mser@i({\Box},{\Diamond},{p})}}
\def\mser@i(#1,#2,#3){#1#3 \implication #2#3}
    \newcommand{\mserp}{\@ifnextchar({\mserp@i}{\mserp@i({p})}}
    \def\mserp@i(#1){\mser(\Box,\Diamond,#1)}
\newcommand{\FOser}{\@ifnextchar({\FOser@i}{\FOser@i({x},{y},{P})}}
\def\FOser@i(#1,#2,#3){\forall #1\exists #2\,#3(#1,#2)}
    \newcommand{\FOserp}{\@ifnextchar({\FOserp@i}{\FOserp@i({P})}}
    \def\FOserp@i(#1){\FOser(x,y,#1)}
\newcommand{\FOseri}{\@ifnextchar({\FOseri@i}{\FOseri@i({x},{y},{P})}}
\def\FOseri@i(#1,#2,#3){\forall #1\exists #2\,#1#3#2}
    \newcommand{\FOserip}{\@ifnextchar({\FOserip@i}{\FOserip@i({P})}}
    \def\FOserip@i(#1){\FOseri(x,y,#1)}
\newcommand{\mla}{\@ifnextchar({\mla@i}{\mla@i({\Box},{p})}}
\def\mla@i(#1,#2){#1(#1#2 \implication #2) \implication #1#2}
    \newcommand{\mlap}{\@ifnextchar({\mlap@i}{\mlap@i({p})}}
    \def\mlap@i(#1){\mla(\Box,#1)}
\newcommand{\mgrz}{\@ifnextchar({\mgrz@i}{\mgrz@i({\Box},{p})}}
\def\mgrz@i(#1,#2){#1(#1(#2 \implication #1#2) \implication #2) \implication #2}
    \newcommand{\mgrzp}{\@ifnextchar({\mgrzp@i}{\mgrzp@i({p})}}
    \def\mgrzp@i(#1){\mgrz(\Box,#1)}
\newcommand{\mwgrz}{\@ifnextchar({\mwgrz@i}{\mwgrz@i({\Box},{p})}}
\def\mwgrz@i(#1,#2){#1^+(#1(#2 \implication #1#2) \implication #2) \implication #2}
    \newcommand{\mwgrzp}{\@ifnextchar({\mwgrzp@i}{\mwgrzp@i({p})}}
    \def\mwgrzp@i(#1){\mwgrz(\Box,#1)}
\newcommand{\ExtDiamond}{\@ifnextchar({\ExtDiamond@i}{\ExtDiamond@i({p})}}
\def\ExtDiamond@i(#1){{\Diamond\!\!\!\!\Diamond}_{#1}}
\renewcommand{\dismath} [1] {#1}
\renewcommand{\distext} [1] {#1}
\renewcommand{\argument}[1] {#1}
\newcounter{\theequation}[section]
\renewcommand{\theequation}{\thesection.\arabic{equation}}
\renewcommand{\kModel} [1]{\kmodel{#1}}
\renewcommand{\Scls}   [1]{\scls{#1}}
\newcommand{\QSIL}     [0]{\mathop{\mbox{\textnormal{\texttt{QSIL}}}}}
\newcommand{\QSILext}  [2]{\mathop{\mbox{\textnormal{\texttt{QSIL}}}^{\mathit{#1}}_{\mathit{#2}}}}
\newcommand{\aug}      [2]{\mathop{\mbox{\textnormal{\texttt{aug}}}^{\mathit{#1}}_{\mathit{#2}}}}
\def\thmstyle{\it} 
\def\@begintheorem#1#2{\it \trivlist \item[\hskip
        \labelsep{\bf #1\ #2.}]\thmstyle}
\def\@opargbegintheorem#1#2#3{\it \trivlist \item[\hskip
        \labelsep{\bf #1\ #2\ (#3).}]\thmstyle}
\newtheorem{theorem}{{\indent}Theorem}[section]
\newtheorem{lemma}[theorem]{{\indent}Lemma}
\newtheorem{proposition}[theorem]{{\indent}Proposition}
\newtheorem{sublemma}[theorem]{{\indent}Sublemma}
\newtheorem{corollary}[theorem]{{\indent}Corollary}
\newcommand{\numeral}[1]{\overline{#1}} 
\definecolor{cg0}{gray}{1.00}          
\definecolor{cg1}{gray}{0.90}          
\definecolor{cg2}{gray}{0.80}          
\definecolor{cg3}{gray}{0.70}          
\definecolor{cg4}{gray}{0.60}          
\definecolor{cg5}{gray}{0.50}          
\definecolor{cg6}{gray}{0.40}          
\definecolor{cg7}{gray}{0.30}          
\definecolor{cg8}{gray}{0.20}          
\definecolor{cg9}{gray}{0.10}          
\newcommand{\nicearrow}[3]
{
}
\newcommand{\drawtileflattm} [9]
{
\draw [white, opacity = 0, name path = diag 1] #1--#3;
\draw [white, opacity = 0, name path = diag 2] #2--#4;
\draw [name intersections = {of = diag 1 and diag 2, by = {tcenter}}];

\foreach \c in {0.96}
{
  \coordinate (ttl)     at ($(tcenter)+\c*#4-\c*(tcenter)$);
  \coordinate (tbr)     at ($(tcenter)+\c*#2-\c*(tcenter)$);
  \coordinate (tbl)     at ($(tcenter)+\c*#1-\c*(tcenter)$);
  \coordinate (ttr)     at ($(tcenter)+\c*#3-\c*(tcenter)$);
  \coordinate (sttl)    at ($(tcenter)+0.4*(ttl)-0.4*(tcenter)$);
  \coordinate (stbr)    at ($(tcenter)+0.4*(tbr)-0.4*(tcenter)$);
  \coordinate (stbl)    at ($(tcenter)+0.4*(tbl)-0.4*(tcenter)$);
  \coordinate (sttr)    at ($(tcenter)+0.4*(ttr)-0.4*(tcenter)$);
}


\coordinate (ledge) at ($0.5*#4+0.5*#1$);
\coordinate (bedge) at ($0.5*#1+0.5*#2$);
\coordinate (redge) at ($0.5*#2+0.5*#3$);
\coordinate (tedge) at ($0.5*#3+0.5*#4$);



\draw (ttl)--(tbl)--(tbr)--(ttr)--cycle;
\draw (sttl)--(stbl)--(stbr)--(sttr)--cycle;
\draw (ttl)--(sttl);
\draw (ttr)--(sttr);
\draw (tbl)--(stbl);
\draw (tbr)--(stbr);

\node [] at (tcenter) {\phantom{$y^l_l$}{#9}\phantom{$y^l_l$}};
\node [fill = white, draw = black!25, rounded corners] at (ledge)   {{$\phantom{iR^i_iy}$}};
\node [fill = white, draw = black!25, rounded corners] at (bedge)   {{$\phantom{iR^i_iy}$}};
\node [fill = white, draw = black!25, rounded corners] at (redge)   {{$\phantom{iR^i_iy}$}};
\node [fill = white, draw = black!25, rounded corners] at (tedge)   {{$\phantom{iR^i_iy}$}};
\node [] at (ledge)   {\phantom{$y^l_l$}{#5}\phantom{$y^l_l$}};
\node [] at (bedge)   {\phantom{$y^l_l$}{#6}\phantom{$y^l_l$}};
\node [] at (redge)   {\phantom{$y^l_l$}{#7}\phantom{$y^l_l$}};
\node [] at (tedge)   {\phantom{$y^l_l$}{#8}\phantom{$y^l_l$}};
}
\newcommand{\drawtileflattms} [4]
{
\draw [white, opacity = 0, name path = diag 1] #1--#3;
\draw [white, opacity = 0, name path = diag 2] #2--#4;
\draw [name intersections = {of = diag 1 and diag 2, by = {tcenter}}];

\foreach \c in {0.96}
{
  \coordinate (ttl)     at ($(tcenter)+\c*#4-\c*(tcenter)$);
  \coordinate (tbr)     at ($(tcenter)+\c*#2-\c*(tcenter)$);
  \coordinate (tbl)     at ($(tcenter)+\c*#1-\c*(tcenter)$);
  \coordinate (ttr)     at ($(tcenter)+\c*#3-\c*(tcenter)$);
  \coordinate (sttl)    at ($(tcenter)+0.4*(ttl)-0.4*(tcenter)$);
  \coordinate (stbr)    at ($(tcenter)+0.4*(tbr)-0.4*(tcenter)$);
  \coordinate (stbl)    at ($(tcenter)+0.4*(tbl)-0.4*(tcenter)$);
  \coordinate (sttr)    at ($(tcenter)+0.4*(ttr)-0.4*(tcenter)$);
}


\coordinate (ledge) at ($0.5*#4+0.5*#1$);
\coordinate (bedge) at ($0.5*#1+0.5*#2$);
\coordinate (redge) at ($0.5*#2+0.5*#3$);
\coordinate (tedge) at ($0.5*#3+0.5*#4$);



\draw (ttl)--(tbl)--(tbr)--(ttr)--cycle;
\draw (sttl)--(stbl)--(stbr)--(sttr)--cycle;
\draw (ttl)--(sttl);
\draw (ttr)--(sttr);
\draw (tbl)--(stbl);
\draw (tbr)--(stbr);

}
\newcommand{\drawtileflattmslanted} [4]
{
\draw [white, opacity = 0, name path = diag 1] #1--#3;
\draw [white, opacity = 0, name path = diag 2] #2--#4;
\draw [name intersections = {of = diag 1 and diag 2, by = {tcenter}}];

\foreach \c in {0.96}
{
  \coordinate (ttl)     at ($(tcenter)+\c*#4-\c*(tcenter)$);
  \coordinate (tbr)     at ($(tcenter)+\c*#2-\c*(tcenter)$);
  \coordinate (tbl)     at ($(tcenter)+\c*#1-\c*(tcenter)$);
  \coordinate (ttr)     at ($(tcenter)+\c*#3-\c*(tcenter)$);
  \coordinate (sttl)    at ($(tcenter)+0.4*(ttl)-0.4*(tcenter)$);
  \coordinate (stbr)    at ($(tcenter)+0.4*(tbr)-0.4*(tcenter)$);
  \coordinate (stbl)    at ($(tcenter)+0.4*(tbl)-0.4*(tcenter)$);
  \coordinate (sttr)    at ($(tcenter)+0.4*(ttr)-0.4*(tcenter)$);
}


\coordinate (ledge) at ($0.5*#4+0.5*#1$);
\coordinate (bedge) at ($0.5*#1+0.5*#2$);
\coordinate (redge) at ($0.5*#2+0.5*#3$);
\coordinate (tedge) at ($0.5*#3+0.5*#4$);



\draw (ttl)--(tbl)--(tbr)--(ttr)--cycle;
\draw (sttl)--(stbl)--(stbr)--(sttr)--cycle;
\draw (ttl)--(sttl);
\draw (ttr)--(sttr);
\draw (tbl)--(stbl);
\draw (tbr)--(stbr);
}
\newcommand{\circleone} [4]
{
\draw [fill = #3] #1 circle [radius = #2];
}
\newcommand{\circletwo} [4]
{
\circleone{#1}{#2}{#3}{#4}
\draw [] #1 circle [radius = #2+#4];
}
\newcommand{\circlethree} [4]
{
\circletwo{#1}{#2}{#3}{#4}
\draw [] #1 circle [radius = #2+2*#4];
}
\newcommand{\circletile} [6]
{
\filldraw[fill=#3, draw=black] #1 -- ($#1+#2*(-0.707,+0.707)$)
                               arc [start angle=135, end angle=225, radius=#2] -- cycle;
\filldraw[fill=#4, draw=black] #1 -- ($#1+#2*(-0.707,-0.707)$)
                               arc [start angle=225, end angle=315, radius=#2] -- cycle;
\filldraw[fill=#5, draw=black] #1 -- ($#1+#2*(+0.707,-0.707)$)
                               arc [start angle=-45, end angle= 45, radius=#2] -- cycle;
\filldraw[fill=#6, draw=black] #1 -- ($#1+#2*(+0.707,+0.707)$)
                               arc [start angle= 45, end angle=135, radius=#2] -- cycle;
}
\newcommand{\drawtileflatsmall} [8]
{
\coordinate (stcenter) at ($0.5*#1+0.5*#3$);

\fill [#5, fill opacity=0.75] #1--(stcenter)--#4--cycle;
\fill [#6, fill opacity=0.75] #2--(stcenter)--#1--cycle;
\fill [#7, fill opacity=0.75] #3--(stcenter)--#2--cycle;
\fill [#8, fill opacity=0.75] #4--(stcenter)--#3--cycle;

\draw #1 -- #3;
\draw #2 -- #4;
\draw #1 -- #2 -- #3 -- #4 -- cycle;
}
\newcommand{\drawhalftileflatsmallh} [6]
{
\coordinate (stcenter) at ($0.5*#1+0.5*#3$);

\fill [#5, fill opacity=0.75] #1--(stcenter)--#4--cycle;
\fill [#6, fill opacity=0.75] #3--(stcenter)--#2--cycle;

\draw #1 -- #3 -- #2 -- #4 -- cycle;
}
\newcommand{\drawhalftileflatsmallv} [6]
{
\coordinate (stcenter) at ($0.5*#1+0.5*#3$);

\fill [#5, fill opacity=0.75] #2--(stcenter)--#1--cycle;
\fill [#6, fill opacity=0.75] #4--(stcenter)--#3--cycle;

\draw #1 -- #3 -- #4 -- #2 -- cycle;
}
\begin{document}

\title{Superintuitionistic predicate logics of linear frames: \\ 
undecidability with two individual variables\thanks{The paper was prepared within the framework of the project ``International academic cooperation'' HSE University.}}
\author{Mikhail Rybakov}
\affil{{Higher School of Modern Mathematics MIPT}, {HSE University}}


\date{}

\maketitle

\begin{abstract}
The paper presents a solution to the long-standing question about the decidability of the two-variable fragment of the superintuitionistic predicate logic~$\logic{QLC}$ defined by the class of linear Kripke frames, which is also the `superintuitionistic' fragment of the modal predicate logic~$\logic{QS4.3}$, under the G\"odel translation. We prove that the fragment is undecidable ($\Sigma^0_1$-complete). The result remains true for the positive fragment, even with a single binary predicate letter and an infinite set of unary predicate letters. 
Also, we prove that the logic defined by ordinal $\omega$ as a Kripke frame is not recursively enumerable (even both $\Sigma^0_1$-hard and $\Pi^0_1$-hard) with the same restrictions on the language. 
The results remain true if we add also the constant domain condition. The proofs are based on two techniques: a modification of the method proposed by M.\,Marx and M.\,Reynolds, which allows us to describe tiling problems using natural numbers rather than pairs of numbers within an enumeration of Cantor's, and an idea of `double labeling' the elements from the domains, which allows us to use only two individual variables in the proof when applying the former method.
\end{abstract}






\newcommand{\gw}{{\color{green!60!black}w}}
\newcommand{\gf}{{\color{green!60!black}f}}
\newcommand{\gr}{{\color{green!60!black}r}}
\newcommand{\ga}{{\color{green!60!black}a}}
\newcommand{\rw}{{\color{red!60!black}$\bar{\mathrm{w}}$}}
\newcommand{\rf}{{\color{red!60!black}$\bar{\mathrm{f}}$}}
\newcommand{\rr}{{\color{red!60!black}$\bar{\mathrm{r}}$}}
\newcommand{\ra}{{\color{red!60!black}$\bar{\mathrm{a}}$}}

\newcommand{\tile} {{\dismath{\mathit{tile}}}}
\newcommand{\abv}  {{\dismath{\mathit{above}}}}
\newcommand{\rght} {{\dismath{\mathit{right}}}}
\newcommand{\wall} {{\dismath{\mathit{wall}}}}
\newcommand{\floor}{{\dismath{\mathit{floor}}}}
\newcommand{\nxt}  {{\dismath{\mathit{next}}}}
\newcommand{\nxtm} {\nxt^m}
\newcommand{\nmbr} {{\dismath{\textnormal{\texttt{num}}}}}
\newcommand{\rzpair} {{\dismath{\textnormal{\texttt{pair}}}}}
\newcommand{\Abv}  {{\dismath{\textnormal{\texttt{above}}}}}
\newcommand{\Rght} {{\dismath{\textnormal{\texttt{right}}}}}
\newcommand{\Wall} {{\dismath{\textnormal{\texttt{wall}}}}}
\newcommand{\Floor}{{\dismath{\textnormal{\texttt{floor}}}}}
\newcommand{\Nxt}  {{\dismath{\textnormal{\texttt{next}}}}}

\section{Introduction}
\setcounter{equation}{0}

This paper is mainly devoted to solving the open problem of the 
decidability of the superintuitionistic predicate logic $\logic{QLC}$ which can be defined as the logic of linear Kripke frames, or, equivalently, as the logic obtained from the intuitionistic predicate logic $\logic{QInt}$ by adding formulas of the form $(\varphi\to\psi)\vee(\psi\to\varphi)$ as axioms, or, equivalently, as the `superintuitionistic' fragment of the modal predicate logic~$\logic{QS4.3}$; see \mbox{\cite[Section~6.7]{GShS}} for details. This logic is very close to the classical predicate logic: it contains a lot of classical principles rejected by $\logic{QInt}$ and even by $\logic{QKC}$, the logic of the weak low of the excluded middle.

It is well known that the classical predicate logic $\logic{QCl}$ is undecidable~\cite{Church36}, even in the language containing a single binary predicate letter and three individual variables~\cite[Section~4.8~(ii)]{TG87} (see also~\cite{MR:2022:DoklMath,MR:2023:LI}). Then, we readily obtain that $\logic{QLC}$ is undecidable in this language, also. At the same time, some quite expressive fragments of $\logic{QCl}$ are decidable: for example, the monadic fragment, even enriched by equality~\cite[Chapter~21]{BBJ07}, the two-variable fragment~\cite{Mortimer75,GKV97}, and guarded fragments~\cite{Gradel99}; see~\cite{BGG97} for the classical decision problem as well. Nevertheless, non-classical logics are often undecidable even in languages with only monadic predicate letters~\cite{Kripke62,MMO65,MR:2001:LI,MR:2002:LI,MR:2017:LI} or with only two individual variables~\cite{KKZ05} or even both with one-two monadic predicate letters and two individual variables~\cite{RSh19SL,RSh20AiML,RShJLC21c,MR:2024:IGPL}. There are results that provide us with decidable fragments, but they are based on quite strong restrictions on languages or semantics~\cite{HWZ00, HWZ01, WZ01, WZ02, MR:2017:LI, CMRT:2022, ARSh:2023:arXiv, RShsubmitted, RShsubmitted2}. 

Let us return to $\logic{QLC}$. 

On the one hand, the mentioned results show us that non-classical logics can be undecidable in languages with only two individual variables. For example, the two-variable fragment of $\logic{QKC}$ is undecidable even with a single unary predicate letter~\cite{RSh19SL}. Moreover, there are results providing us with the undecidability of modal predicate logics of linear frames. So, the monadic two-variable fragment of modal predicate logic $\logic{QS4.3}$, whose class of Kripke frames is the same as of $\logic{QLC}$, is undecidable~\cite{KKZ05}, even with two unary predicate letters~\cite{RSh:2023:arXiv:lin:2021}.

On the other hand, the methods used in the mentioned papers are not applicable to $\logic{QLC}$. The results for $\logic{QS4.3}$~\cite{KKZ05,RSh:2023:arXiv:lin:2021} are based on the fact that in Kripke semantics the heredity condition is not required for the modal predicate language; but this condition is required in the intuitionistic case. The construction for $\logic{QKC}$~\cite{RSh19SL} essencially uses the fact that Kripke frames validating~$\logic{QKC}$ can contain infinite antichains, that is impossible in Kripke frames validating~$\logic{QLC}$. The techniques presented in the other papers are quite far from being applicable to the two-variable fragment of~$\logic{QLC}$ as well. It is also worth adding that the one-variable fragment of $\logic{QLC}$ is decidable, that immediately follows from the decidability of the propositional bimodal logic~$\logic{S5}\times\logic{S4.3}$ \mbox{\cite[Theorem~6.61]{GKWZ}}; very ineteresting close results can be found in~\cite{CMRT:2022}.

But the methods that allow us to prove the decidability of the mentioned fragments are not applicable to the two-variable fragment of~$\logic{QLC}$, too. As a result, the situation with the decidability of the fragment is so unclear that X.\,Caicedo, G.\,Metcalfe, R.\,Rodr\'{i}guez, and O.\,Tuyt recently noted that it `remains an intriguing open problem'~\cite{CMRT:2022}.\footnote{The author was informed about this issue by Dmitry Shkatov in a private conversation.} 
Here, we fix the situation.

Exactly, we prove that $\logic{QLC}$~--- and even its positive fragment~--- is undecidable in the language with two individual variables, a single binary predicate letter, and an infinite set of unary predicate letters. Then, we also show that the result remains true for some extensions of $\logic{QLC}$, in particular, for $\logic{QLC.cd}$, the logic of linear frames with constant domains. 
The result was presented in~\cite{MR:2024:Piter}.


\section{Syntax and semantics}
\label{sec:sem}
\setcounter{equation}{0}

We assume that the intuitionistic predicate language $\lang{L}$ contains countably many individual variables, countably many predicate letters of every arity, the constant $\bot$, the binary connectives $\wedge$, $\vee$, $\to$, the quantifier symbols~$\forall$ and~$\exists$. Formulas in $\lang{L}$, or \defnotion{$\lang{L}$-formulas}, as well as the symbols $\neg$ and $\leftrightarrow$, are defined in the usual way; in particular, $\neg\varphi = \varphi\to \bot$
and $\varphi \leftrightarrow \psi = (\varphi\to \psi)\wedge (\psi\to\varphi)$. 
%

A \defnotion{Kripke frame} is a pair $\kframe{F} = \otuple{W,R}$, where $W$ is a non-empty set of \defnotion{possible worlds} and $R$ is a binary \defnotion{accessibility relation} on~$W$. Let, as usual, $R(w) = \{ w' \in W: w R w'\}$; so, $w' \in R(w)$ mean the same as $w R w'$. If $wRw'$ holds, then we say that $w'$ is \defnotion{accessible} from $w$, or that $w$ \defnotion{sees}~$w'$. 
A~Kripke frame $\kframe{F} = \otuple{W,R}$ is called \defnotion{intuitionistic} if $R$ is a partial order~--- i.e., a reflexive, transitive, and antisymmetric binary relation~--- on~$W$. We say that a Kripke frame is \defnotion{linear} if its accessibility relation is a linear order.

We consider only intuitionistic Kripke frames below.

A \defnotion{Kripke frame with domains}, or, for short, an \defnotion{augmented frame}, is a pair $\kFrame{F} = \langle \kframe{F}, D \rangle$, where $\kframe{F} = \otuple{W,R}$ is a Kripke frame and $D$ is a \defnotion{domain function} $D\colon W\to \Pow{\mathcal{D}}$ associating with every $w\in W$ a non-empty subset of a non-empty set $\mathcal{D}$ of \defnotion{individuals}. The set $D(w)$, also denoted by $D_w$, is called \defnotion{the domain of the world\/~$w$}. Sets of the form $D_w$ are also called \defnotion{local domains} of $\kFrame{F}$ and $\mathcal{D}$ is called \defnotion{the global domain} of $\kFrame{F}$; without a loss of generality, we may assume that
$$
\begin{array}{rcl}
\mathcal{D} & = & \displaystyle\bigcup\limits_{\mathclap{w\in W}}D_w.
\end{array}
$$
The augmented frame $\kFrame{F} = \otuple{\kframe{F}, D}$ is also denoted by~$\kframe{F}_D$. We say that $\kframe{F}_D$ is \defnotion{based on $\kframe{F}$}, or is \defnotion{defined over~$\kframe{F}$}. 

We say that an augmented frame $\kframe{F}_D$ based on a Kripke frame $\kframe{F}=\otuple{W,R}$ satisfies the \defnotion{expanding domain condition} if, for all $u,w \in W$,
\begin{equation}
\begin{array}{lcl}
  uRw & \Longrightarrow & D_u \subseteq D_{w};
\end{array}
\label{eq:ED}
\end{equation}
then we call $\kframe{F}_D$ \defnotion{augmented frame with expanding domains} or, for short, \defnotion{e\nobreakdash-augmented frame}. We say that  $\kframe{F}_D$ satisfies the \defnotion{locally constant domain condition} if, for all $u,w \in W$,
\begin{equation}
\begin{array}{lcl}
  uRw & \Longrightarrow & D_u = D_{w},
\end{array}
\label{eq:LCD}
\end{equation}
and the \defnotion{globally constant domain condition} if, for all $u,w \in W$,
\begin{equation}
\begin{array}{lcl}
D_u = D_{w}.
\end{array}
\label{eq:GCD}
\end{equation}
If $\kframe{F}_D$ satisfies \eqref{eq:LCD}, then we call it \defnotion{augmented frame with constant domains} or, for short, \defnotion{c\nobreakdash-augmented frame}. If $\kframe{F}_D$ satisfies \eqref{eq:GCD} and $\mathcal{D}$ is the global domain of $\kframe{F}_D$, then, following~\cite{GShS}, we also denote it by~$\kframe{F}\odot\mathcal{D}$. For convenience, sometimes we write $\otuple{W,R,D}$ for $\otuple{\kframe{F},D}$ with $\kframe{F}=\otuple{W,R}$. We assume that all augmented frames satisfy~\eqref{eq:ED}, i.e., are e-augmented frames, below; if an e-augmented frame is based on an intuitionistic Kripke frame, then we call it \defnotion{intuitionistic augmented frame}. 


An \defnotion{intuitionistic predicate Kripke model}, or simply a \defnotion{Kripke model}, is a tuple $\kModel{M} = \langle \kframe{F}_D, I\rangle$, where $\kframe{F}_D = \langle W, R, D \rangle$ is an intuitionistic augmented frame and $I$ is a map, called an \defnotion{interpretation of predicate letters}, assigning to a world $w\in W$ and an $n$-ary predicate letter $P$ an $n$-ary relation $I(w,P)$ on $D_w$ and satisfying the following \defnotion{heredity condition}: 
for all $w, w' \in W$ and every predicate letter~$P$,
$$
\begin{array}{lcl}
  wRw' & \Longrightarrow & I(w,P) \subseteq I(w',P).
\end{array}
$$
We also write 
$\langle W, R, D, I\rangle$ for $\langle \kframe{F}_D, I\rangle$ below. 
For a Kripke model $\kModel{M} = \langle \kframe{F}_D, I\rangle$, we say that $\kModel{M}$ is \defnotion{based on\/ $\kframe{F}_D$} and is \defnotion{based on\/ $\kframe{F}$}.

An \defnotion{assignment} in a Kripke model $\kModel{M} = \otuple{W, R, D, I}$ is a map $g$ associating with every variable $x$ an element $g(x)$ of the global domain of the augmented frame $\otuple{W, R, D}$. If $g$ and $h$ are assignments such that $g(y) = h(y)$ whenever $y \ne x$, we write $g \stackrel{x}{=} h$.

The truth of an $\lang{L}$-formula $\varphi$ at a world $w$ of an intuitionistic predicate Kripke model $\kModel{M} = \otuple{W,R,D,I}$ under an assignment $g$ is defined recursively:
\settowidth{\templength}{\mbox{$\kModel{M},w\models^g\varphi'$ and $\kModel{M},w\models^g\varphi''$;}}
\settowidth{\templengtha}{\mbox{$w$}}
\settowidth{\templengthb}{\mbox{$\kModel{M},w\models^{h}\varphi'$, for every assignment $h$ such that}}
\settowidth{\templengthc}{\mbox{$\kModel{M},w\models^g P(x_1,\ldots,x_n)$}}
\settowidth{\templengthd}{\mbox{$\kModel{M},\parbox{\templengtha}{$v$}\models^{h}\varphi'$, for every $v\in R(w)$ and every assignment;;;}}
$$
\begin{array}{lcl}
\kModel{M},w\models^g P(x_1,\ldots,x_n)
  & \leftrightharpoons
  & \parbox{\templengthd}{$\langle g(x_1),\ldots,g(x_n)\rangle \in P^{I, w}$,} \\
\end{array}
$$
\mbox{where $P$ is an $n$-ary predicate letter;}
\settowidth{\templength}{\mbox{$\kModel{M},w\models^g\varphi'$ and $\kModel{M},w\models^g\varphi''$;}}
\settowidth{\templengtha}{\mbox{$w$}}
\settowidth{\templengthb}{\mbox{$\kModel{M},w\models^{g}\varphi'\to\varphi''$}}
\settowidth{\templengthc}{\mbox{$\kModel{M},w\models^g P(x_1,\ldots,x_n)$}}
$$
\begin{array}{lcl}
\parbox{\templengthc}{{}\hfill\parbox{\templengthb}{$\kModel{M},w \not\models^g \bot;$}}
  \\
\parbox{\templengthc}{{}\hfill\parbox{\templengthb}{$\kModel{M},w\models^g\varphi' \wedge \varphi''$}}
  & \leftrightharpoons
  & \parbox[t]{\templength}{$\kModel{M},w\models^g\varphi'$ and $\kModel{M},w\models^g\varphi''$;}
  \\
\parbox{\templengthc}{{}\hfill\parbox{\templengthb}{$\kModel{M},w\models^g\varphi' \vee \varphi''$}}
  & \leftrightharpoons
  & \parbox[t]{\templength}{$\kModel{M},w\models^g\varphi'$\hfill or\hfill $\kModel{M},w\models^g\varphi''$;}
  \\
\parbox{\templengthc}{{}\hfill\parbox{\templengthb}{$\kModel{M},w\models^g\varphi' \to \varphi''$}}
  & \leftrightharpoons
  & \parbox[t]{\templengthd}{$\kModel{M},\parbox{\templengtha}{$v$}\models^g\varphi'$ implies $\kModel{M},v\models^g\varphi''$, for every $v\in R(w)$;}
  \\
\parbox{\templengthc}{{}\hfill\parbox{\templengthb}{$\kModel{M},w\models^g\forall x\,\varphi'$}}
  & \leftrightharpoons
  & \parbox{\templengthd}{$\kModel{M},\parbox{\templengtha}{$v$}\models^{h}\varphi'$, for every $v\in R(w)$ and every assignment}
  \\
  &
  & \mbox{\phantom{$\kModel{M},w\models^{g'}\varphi'$, }$h$ such that $h \stackrel{x}{=} g$ and $h(x)\in D_v$;}
  \\
\parbox{\templengthc}{{}\hfill\parbox{\templengthb}{$\kModel{M},w\models^g\exists x\,\varphi'$}}
  & \leftrightharpoons
  & \parbox{\templengthd}{$\kModel{M},w\models^{h}\varphi'$, for some assignment $h$ such that $h \stackrel{x}{=} g$}
  \\
  &
  & \mbox{\phantom{$\kModel{M},w\models^{g'}\varphi'$, }and $h(x)\in D_w$.}
\end{array}
$$

Let $\kModel{M}$, $\kframe{F}_D$, $\kframe{F}$, and $\Scls{C}$ be an intuitionistic Kripke model, an intuitionistic augmented frame, an intuitionistic Kripke frame, and a class of intuitionistic augmented frames, respectively, $w$ a world of $\kModel{M}$, and $\varphi$ a formula with free variables $x_1,\ldots,x_n$; then define
\settowidth{\templength}{\mbox{$\kModel{M},w\models^g P(x_1,\ldots,x_n)$}}
\settowidth{\templengtha}{\mbox{$\kModel{M},w\models^{h}\varphi'$, for every assignment $h$ such that}}
\settowidth{\templengthb}{\mbox{$w$}}
\settowidth{\templengthc}{\mbox{$\kframe{F}_D$}}
$$
\begin{array}{rcl}
\Rem{
\parbox{\templength}{{}\hfill$\kModel{M},w\models \varphi$}
  & \leftrightharpoons
  & \parbox[t]{\templengthd}{$\kModel{M},w\models^g \varphi$, for every assignment $g$ such that}
  \\
  &
  & \mbox{\phantom{$\kModel{M},w\models^g \varphi$, }$g(x_1),\ldots,g(x_n)\in D_w$;}
  \\
} 
\parbox{\templength}{{}\hfill$\kModel{M}\models \varphi$}
  & \leftrightharpoons
  & \parbox[t]{\templengthd}{$\kModel{M},w\models^g \varphi$, for every world $w$ of $\kModel{M}$ and every~$g$}
  \\
  &
  & \mbox{\phantom{$\kModel{M},w\models^g \varphi$, }such that $g(x_1),\ldots,g(x_n)\in D_w$;}
  \\
\parbox{\templength}{{}\hfill$\kframe{F}_D\models \varphi$}
  & \leftrightharpoons
  & \parbox[t]{\templengthd}{$\parbox{\templengthc}{$\kModel{M}$}\models \varphi$, for every intuitionistic model $\kModel{M}$ based on $\kframe{F}_D$;}
  \\
\parbox{\templength}{{}\hfill$\kframe{F}\models \varphi$}
  & \leftrightharpoons
  & \parbox[t]{\templengthd}{$\parbox{\templengthc}{$\kModel{M}$}\models \varphi$, for every intuitionistic model $\kModel{M}$ based on $\kframe{F}$;}
  \\
\parbox{\templength}{{}\hfill$\Scls{C}\models \varphi$}
  & \leftrightharpoons
  & \parbox[t]{\templengtha}{$\parbox{\templengthc}{$\kframe{F}_D$}\models \varphi$, for every $\kframe{F}_D\in\Scls{C}$.}
  \\
\end{array}
$$
If $\mathfrak{S}\models\varphi$, for a structure $\mathfrak{S}$ (a~model, a~frame, etc.), we say that the formula $\varphi$ is \defnotion{true} or \defnotion{valid} in (on, at)~$\mathfrak{S}$; otherwise, $\varphi$ is \defnotion{refuted} in (on, at)~$\mathfrak{S}$.
These notions, and the corresponding notations, can be extended to sets of formulas in a natural way: for a set of formulas $X$, define $\mathfrak{S}\models X$ as $\mathfrak{S}\models\varphi$, for every $\varphi\in X$.


The intuitionistic predicate logic $\logic{QInt}$ is the set of $\lang{L}$-formulas valid on every intuitionistic Kripke frame; it can also be defined through a Hilbert-style calculus with a finite set of axioms~\cite{GShS,vanDalen}. A \defnotion{superintuitionistic predicate logic} is a set of $\lang{L}$-formulas that includes $\logic{QInt}$ and is closed under Modus Ponens, Substitution, and Generalization. If $L$ is a superintuitionistic predicate logic and $\Gamma$ is a set of $\lang{L}$-formulas, then $L + \Gamma$ denotes the smallest superintuitionistic logic containing $L \cup \Gamma$. For a formula~$\varphi$, we write $L+\varphi$ rather than $L+\{\varphi\}$. If $L$ is a propositional superintuitionistic logic, then define $\logic{Q}L$ by $\logic{Q}L = \logic{QInt} + L$. 

Let $\Scls{C}$ be a class of intuitionistic augmented frames. Define the \defnotion{superintuitionistic predicate logic\/ $\QSIL \Scls{C}$ of the class\/~$\Scls{C}$} by
$$
\begin{array}{lcl}
\QSIL \Scls{C} & = & \{\varphi\in\lang{L} : \Scls{C}\models\varphi\}.
\end{array} 
$$
For a class $\scls{C}$ of intuitionistic Kripke frames, define 
\begin{itemize}
\item
$\aug{e}{}{\scls{C}}$ be the classes of e-augmented frames based on Kripke frames of~$\scls{C}$;
\item
$\aug{c}{}{\scls{C}}$ be the classes of c-augmented frames based on Kripke frames of~$\scls{C}$,
\end{itemize} 
and let
$$
\begin{array}{lcl}
\QSILext{e}{} \scls{C} & = & \QSIL \aug{e}{} \scls{C};
\\
\QSILext{c}{} \scls{C} & = & \QSIL \aug{c}{} \scls{C}.
\end{array} 
$$
Observe that $\QSILext{e}{} \scls{C} \subseteq \QSILext{c}{} \scls{C}$.
For an intuitionistic Kripke frame $\kframe{F}$, we write $\QSILext{e}{}\kframe{F}$ and $\QSILext{c}{}\kframe{F}$ rather than $\QSILext{e}{}\{\kframe{F}\}$ and $\QSILext{c}{}\{\kframe{F}\}$, respectively; similarly for augmented frames.

Let $P$ be a unary predicate letter and $p$ a proposition letter (i.e., nullary predicate letter); the formula $\bm{cd} = \forall x\, (P(x) \dis p) \imp \forall x\, P(x) \dis p$ is valid on an intuitionistic augmented frame $\kframe{F}_D$ if, and only if, $\kframe{F}_D$ satisfies \eqref{eq:LCD}. 
If $L$ is a superintuitionistic predicate logic, then $L\logic{.cd}$ denotes the logic $L + \bm{cd}$. 

Here, we are interested in logic $\logic{QLC}$~--- more exactly, in its two-variable fragment~--- and some its extensions, such as the logics of the frames $\otuple{\numZ,\leqslant}$ and $\otuple{\numR,\leqslant}$. Logic $\logic{QLC}$ is a predicate counterpart of the superintuitionistic propositional logic $\logic{LC}=\logic{Int}+(p\to q)\vee(q\to p)$. It is known that $\logic{QLC}$ is the logic of the class of linear intuitionistic Kripke frames and even is the logic of the frame $\otuple{\numQ,\leqslant}$~\cite[Section~6.7]{GShS}; also, it is the `superintuitionistic' fragment of the modal predicate logic $\logic{QS4.3}$, under the G\"odel translation.

\section{Tiling problem}
\setcounter{equation}{0}

We shall prove that $\logic{QLC}$ and some its extensions are undecidable ($\Sigma^0_1$\nobreakdash-hard)
in the language with two individual variables. To that end, we reduce a $\Pi^0_1$\nobreakdash-complete 
recurrent tiling problem to the satisfiability problem for the logics.
%
Here, we briefly recall some notions and formulate the tiling problem that we shall deal with.

Let us think of a \defnotion{tile} as a colored $1 \times 1$ square with fixed orientation. Each edge of a tile is colored by a \defnotion{color} from a countable palette (so, we can take the colors to be natural numbers or words in a finite alphabet).
A \defnotion{tile type} consists of a specification of a color for each edge; we write $\leftsq t$, $\rightsq t$, $\upsq t$, and $\downsq t$ for the colors of the left, the right, the top, and the bottom edges of the tiles of tile type~$t$.

Let $T$ be a non-empty set of tile types. A \defnotion{$T$-tiling} is a function $f\colon\numN\times\numN\to T$. We may think of a $T$-tiling as an arrangement of tiles, whose types are in $T$, on an $\numN \times \numN$ grid. 

Let $f\colon\numN\times\numN\to T$ be a $T$-tiling; we define some conditions for it, allowing us to formulate the tiling problem.
%
The first condition is that the edge colors of the adjacent tiles match horizontally: for all $i, j \in \numN$,
\begin{equation}
\label{eq:T1}
\begin{array}{lcl}
\rightsq f(i,j) & = &\leftsq f(i+1,j).
\end{array}
\end{equation}
The second one is that they match vertically: for all $i, j \in \numN$,
\begin{equation}
\label{eq:T2}
\begin{array}{lcl}
\upsq f(i,j) & = & \downsq f(i,j+1).
\end{array}
\end{equation}
\Rem{ 
Finally, the third one, is that there are infinitely many tiles of a specified tile type $t_0\in T$ in the left-most row of the $T$-tiling:
\begin{equation}
\label{eq:T3}
\begin{array}{lcl}
\mbox{the set $\{ j \in \numN : f(0, j) = t_0 \}$ is infinite.}
\end{array}
\end{equation}
}
The 
tiling problem we consider is the following: given a non-empty finite set $T$ of tile types, we are to determine whether there exists a $T$-tiling satisfying~\eqref{eq:T1} and~\eqref{eq:T2}. This tiling problem is known to be $\Pi^0_1$-complete~\cite{Berger66}. 

\section{Technique preliminaries}
\setcounter{equation}{0}

To describe the set $\numN\times\numN$, let us take a well-known Cantor's enumeration $\rzpair \colon\numN\to\numN\times\numN$ of the set $\numN\times\numN$; the enumeration has been used by M.\,Marx and M.\,Reynolds~\cite{MarxReynolds:1999} for the same purposes in the context of compass logic~\cite{Venema:1990} (for the method, see also~\cite{RZ01} and \mbox{\cite[Chapter~7]{GKWZ}}), and we save some notations from~\cite{MarxReynolds:1999} below. The enumeration is defined by the following clauses, for all $i\in\numNp$ and $j,k\in\numN$:
\settowidth{\templengtha}{\mbox{$k$}}
\settowidth{\templengthb}{\mbox{$0$}}
\settowidth{\templengthc}{\mbox{$j$}}
\begin{flalign}
&\rzpair (\parbox{\templengtha}{0}) = \otuple{0,0}; \label{eq:pair:0}\\
&\rzpair (k) = \otuple{\parbox{\templengthb}{\centering$i$},j} \quad \imply \quad \rzpair(k+1) = \otuple{\parbox{\templengthc}{\centering$i$}-1,j+1}; \label{eq:pair:2}\\
&\rzpair (k) = \otuple{0,j} \quad \imply \quad \rzpair(k+1) = \otuple{j+1,0}, \label{eq:pair:1}
\end{flalign} 
see Figure~\ref{fig:1}. Clearly, there exists the converse function $\nmbr\colon \numN\times\numN \to \numN$, i.e., defined by
$$
\begin{array}{lcl}
\nmbr(i,j) = k 
  & \iff 
  & \rzpair (k) = \otuple{i,j}. 
\end{array}
$$
For convenience, we write also $\otuple{i_k,j_k}$ instead of $\rzpair(k)$ below; in particular, $\nmbr(i_k,j_k) = k$.
Next, let us define the functions $\Rght\colon\numN\to\numN$ and $\Abv\colon\numN\to\numN$ as follows: for every $k\in\numN$, 
\begin{flalign}
&\Rght(k) ~=~ \nmbr(i_k+1,j_k); \label{eq:pair:right}\\ 
&\Abv(k)  ~=~ \nmbr(i_k,j_k+1). \label{eq:pair:above}
\end{flalign} 
Finally, in order to get a complete correspondence with the system of concepts proposed in~\cite{MarxReynolds:1999}, we define properties $\Wall$ and $\Floor$ of natural numbers by
\begin{flalign}
\Wall(k) ~~&\bydef~~ i_k=0; \label{eq:pair:wall}\\ 
\Floor(k)  ~~&\bydef~~ j_k=0. \label{eq:pair:floor}
\end{flalign} 
Notice that, for every $k\in \numN$,
\begin{equation}
\label{eq:coonection:right-above-a}
\begin{array}{lcl}
\Abv(k) 
 & =
 & \Rght(k)+1
\end{array}
\end{equation} 
and
\begin{equation}
\label{eq:coonection:right-above}
\begin{array}{lcl}
\Rght(k+1) 
 & =
 & \left\{
   \begin{array}{ll}
   \Abv(k)   & \mbox{if $\neg\Wall(k)$;}\\
   \Abv(k)+1 & \mbox{if $\phantom{\neg}\Wall(k)$.}\\
   \end{array}
   \right.
\end{array}
\end{equation} 

\begin{figure}
\centering
\begin{tikzpicture}[scale=0.96]

\foreach \x in {0,...,6}
  {
    \draw [color = black!25] (\x,-0.75)--(\x,5.45);
    \node [below, color=black!45] at (\x,-0.75) {$\x$};
    \node [above, color=black!25] at (\x, 5.45) {$\vdots$};
  }
\foreach \y in {0,...,5}
  {
    \draw [color = black!25] (-0.75,\y)--(6.45,\y);
    \node [left , color=black!45] at (-0.75,\y) {$\y$};
    \node [right, color=black!25] at ( 6.45,\y) {$\cdots$};
  }

\foreach \x in {0,...,6}
  \foreach \y in {0,...,5}
    {
      \coordinate (g\x\y) at (\x,\y);
		  \draw [fill=black!12.5] (g\x\y) circle [radius=2pt];
    }

\begin{scope}[>=latex, ->, shorten >= 2.1pt, shorten <= 2.1pt, color=black]
\draw  [] (g00)--(g10);
\draw  [] (g10)--(g01);
\draw  [] (g01)--(g20);
\draw  [] (g20)--(g11);
\draw  [] (g11)--(g02);
\draw  [] (g02)--(g30);
\draw  [] (g30)--(g21);
\draw  [] (g21)--(g12);
\draw  [] (g12)--(g03);
\draw  [color=black!75] (g03)--(g40);
\draw  [color=black!64] (g40)--(g31);
\draw  [color=black!60] (g31)--(g22);
\draw  [color=black!56] (g22)--(g13);
\draw  [color=black!52] (g13)--(g04);
\draw  [color=black!45] (g04)--(g50);
\draw  [color=black!40] (g50)--(g41);
\draw  [color=black!35] (g41)--(g32);
\draw  [color=black!30] (g32)--(g23);
\draw  [color=black!25] (g23)--(g14);
\draw  [color=black!20] (g14)--(g05);
\draw  [color=black!16] (g05)--(g60);
\draw  [color=black!12] (g60)--(g51);
\draw  [color=black!8] (g51)--(g42);
\draw  [color=black!4] (g42)--(g33);
\draw  [color=black!3] (g33)--(g24);
\end{scope}

\node [below left] at (g00) {$0$};
\node [below left] at (g10) {$1$};
\node [below left] at (g01) {$2$};
\node [below left] at (g20) {$3$};
\node [below left] at (g11) {$4$};
\node [below left] at (g02) {$5$};
\node [below left] at (g30) {$6$};
\node [below left] at (g21) {$7$};
\node [below left] at (g12) {$8$};
\node [below left] at (g03) {$9$};
\node [below left] at (g40) {$10$};
\node [below left] at (g04) {$14$};
\node [below left] at (g50) {$15$};
\node [below left] at (g05) {$20$};
\node [below left] at (g60) {$21$};

\end{tikzpicture}
\caption{Enumeration of $\numN\times\numN$}
\label{fig:1}
\end{figure}

Using functions defined by \eqref{eq:pair:0}--\eqref{eq:pair:above}, we can readily rewrite 
\eqref{eq:T1} and \eqref{eq:T2} 
with use of natural numbers as the arguments. So, \eqref{eq:T1} looks as
$$
\begin{array}{lcl}
\rightsq f(\rzpair(k)) & = &\leftsq f(\rzpair(\Rght(k))\phantom{.}
\end{array}
$$
and \eqref{eq:T2} looks as
$$
\begin{array}{lcl}
\upsq f(\rzpair(k)) & = & \downsq f(\rzpair(\Abv(k)).
\end{array}
$$
\Rem{ 
and condition~\eqref{eq:T3}, with use of \eqref{eq:pair:wall}, looks as
\begin{equation}
\label{eq:T3:ra}
\begin{array}{lcl}
\mbox{the set $\{ k \in \numN : \mbox{$f(\rzpair(k)) = t_0$ and $\Wall(k)$} \}$ is infinite.}
\end{array}
\end{equation}
}

Although the construction presented below is based primarily on the use of different observations, it allows us to integrate the described system of concepts into~it. We shall use unary predicate letters $\rght$, $\abv$, $\wall$, $\floor$, and $\nxt$~--- the latter corresponds to the function $\Nxt\colon \numN\to\numN$ defined by $\Nxt(k)=k+1$, for every $k\in\numN$,~--- but not quite so that, for example, 
the truth of $\rght(x)$ alone makes it possible to say something definite about the value $\Rght(x)$, and the same for $\abv(x)$ and $\Abv(x)$. However, in some significant cases, a certain correspondence will be achieved.

Let us explain a technical idea that we shall use for this purpose.

To simulate a $T$-tiling, we need a tool allowing us to distinguish elements in the domains of worlds of Kripke models over linear Kripke frames. To distinguish such an element by means of the modal predicate language, we can use a unary predicate letter, say,~$U$: it is possible to make $U(a)$ being true at a world but being false at all different worlds, both accessible from it and seeing it. There is a certain difficulty to do the same using the intuitionistic predicate language: if $\kModel{M},w\models U(a)$, for some $a$ from the domain of a world $w$ of a model $\kModel{M}$, then also $\kModel{M},w'\models U(a)$, for every $w'$ accessible from~$w$. To overcome this difficulty, we shall use not one, but two (or more) unary predicate letters for such purposes. Exactly, let $U$ and $U'$ be unary predicate letters; $w'$, $w$, and $w''$ be different worlds of a model $\kModel{M}$ such that $w'$ sees $w$ and $w$ sees $w''$; $a$ be an element of the domains of the worlds. Let us assume $a$ be \defnotion{distinguished} at $w$ by means of $U$ and $U'$ if  
$$
\begin{array}{lcl}
\kModel{M},w''\hfill\models U'(a) 
  & \mbox{and} 
  & \kModel{M},w''\hfill\models U(a);
  \\
\kModel{M},w\hfill\not\models U'(a) 
  & \mbox{and} 
  & \kModel{M},w\hfill\models U(a);
  \\
\kModel{M},w'\hfill\not\models U'(a) 
  & \mbox{and} 
  & \kModel{M},w'\hfill\not\models U(a).
  \\
\end{array}
$$
If the domain of a Kripke model over a Kripke frame validating $\logic{QLC}$ is $\numN$, then, distinguishing an element $k$ with a pair of certain unary predicate letters, we can achieve that, for exapmle, $\rght(k)$ corresponds to $\Rght(k)$ or that $\abv(k)$ corresponds to $\Abv(k)$.

Let us turn to the exact constructions.

\section{Main construction}
\setcounter{equation}{0}


Let us define formulas describing an $\numN\times\numN$ grid. We divide them on two parts: the technical part involving the `double labeling' and the part where the `labels' are used to simulate the functions and properties defined by \eqref{eq:pair:right}--\eqref{eq:pair:floor}. We will use unary predicate letters $Q$, $Q'$, $S$, $S'$, $S''$, $G$, $\nxt$ for the `double labeling' and $\abv$, $\rght$, $\wall$, $\floor$ to simulate the corresponding functions and properties. The letter $\nxt$, used for the `labeling', will also describe the function $\Nxt\colon\numN\to\numN$ mentioned above; but this remark is insignificant for~us. 


\pagebreak[3]

We start with the `double labeling'. Let $\lhd$ be a binary predicate letter. Define $\mathit{DL}$ to be a conjunction of the following formulas:
\settowidth{\templengtha}{\mbox{$\mathit{Serial}_{\lhd}$}}
\settowidth{\templengthb}{\mbox{$\forall x\forall y\,\Big(\to
    \big(\exists y\,(y\lhd x\wedge \wall(y))\wedge \nxt(x)\wedge \exists x\,(x\lhd y \wedge \abv(x)) \wedge G(y) \to Q(x)\vee S(y)\big)\Big);$}}
$$
\begin{array}{rcl}
  
\Rem{ 
\mathit{primes}_1 
  & = 
  & \forall x\,\Big(\big(Q'(x)\to Q(x)\big)\wedge \big(Q(x)\to \nxt(x)\big)\Big); 
  \smallskip\\
\mathit{primes}_2 
  & = 
  & \forall x\,\Big(\big(S''(x)\to S'(x)\big)\wedge \big(S'(x)\to S(x)\big)\wedge \big(S(x)\to G(x)\big) \Big); 
  \smallskip\\
} 
\parbox{\templengtha}{{}\hfill$\mathit{Serial}_{\lhd}$}  
  & = 
  & \parbox{\templengthb}{$\forall x\exists y\,(x\lhd y);$} 
  \smallskip\\
\Rem{ 
\mathit{Inclusion}_{\lhd}^{\preccurlyeq}  
  & = 
  & \forall x\forall y\,(x\lhd y\to x\preccurlyeq y); 
  \smallskip\\
\mathit{Antisym}_{\lhd}  
  & = 
  & \forall x\forall y\,\big(x\lhd y\to \neg(y\lhd x)\big); 
  \smallskip\\
} 
\mathit{Diag}_{N}  
  & = 
  & \forall x\forall y\,\Big(x\lhd y\to \big(Q(x) \lra \nxt(y)\big)\Big); 
  \smallskip\\
\mathit{Diag}_{Q}  
  & = 
  & \forall x\forall y\,\Big(x\lhd y\to \big(Q'(x)\lra Q(y)\big)\Big); 
  \smallskip\\
\mathit{Diag}_{S}  
  & = 
  & \forall x\forall y\,\Big(x\lhd y\to \big(S'(x) \lra S(y)\big) \wedge \big(S''(x)\lra S'(y)\big)\Big); 
  \smallskip\\
\mathit{Diag}_{G}  
  & = 
  & \forall x\forall y\,\Big(x\lhd y\to \big(S(x) \lra G(y)\big)\Big); 
  \smallskip\\
\mathit{Agree}_{S} 
  & = 
  & \displaystyle
    \forall x\forall y\, 
      \Big( 
        \big(Q(x)\wedge S(y)\to Q'(x)\vee S'(y)\big) 
        \vee 
        \big(Q(x)\wedge S'(y)\to Q'(x)\vee S''(y)\big) 
      \Big); 
  \smallskip\\
\mathit{Agree}_{G} 
  & = 
  & \displaystyle
    \forall x\forall y\, 
      \Big( 
        \big(Q(x)\wedge G(y)\to Q'(x)\vee S(y)\big) 
        \vee 
        \big(Q(x)\wedge S'(y)\to Q'(x)\vee S''(y)\big) 
      \Big); 
  \smallskip\\
\mathit{Agree}_\lhd  
  & = 
  & \displaystyle
    \forall x\forall y\, \big(y\lhd x\wedge S(x)\to S(y)\big). 
  \smallskip\\
\end{array}
$$

Next, let as describe connections of the `labeling' with the functions and properties defined by \eqref{eq:pair:right}--\eqref{eq:pair:floor}. Define $\mathit{FRAW}$ to be a conjunction of the following formulas:
$$
\begin{array}{rcl}
%
\parbox{\templengtha}{{}\hfill$\mathit{EM}_W$}  
  & = 
  & \parbox{\templengthb}{$\forall x\,
    \big(\wall(x)\vee \neg \wall(x)\big)$;}
  \smallskip\\
\mathit{Conn}_1
  & =
  & \forall x\,\Big(\big(\floor(x) \to \neg \abv(x)\big)\wedge \big(\wall(x) \to \neg \rght(x)\big)\Big) ;
  \smallskip\\
\mathit{Conn}_2
  & =
  & \forall x\forall y\,\Big(x\lhd y \to \big(\rght(x)\to \abv(y)\big)\wedge \big(\wall(x)\to \floor(y)\big)\Big) ;
  \smallskip\\
\mathit{Conn}_3
  & =
  & \forall x\,\Big(\big(\abv(x) \to S(x)\big)\wedge \big(\rght(x) \to S'(x)\big)\Big) ;
  \smallskip\\
\mathit{Start}_\lhd 
  & =
  & \forall x\forall y\,\big(x\lhd y\wedge \wall(x)\wedge\floor(x) \to \rght(y)\big);
  \smallskip\\

\mathit{Move}_1
  & =
  & \forall x\forall y\,\Big(
    \big(\forall x\,(x\lhd y\to \wall(x))\wedge\neg\wall(y)\wedge \rght(y)\wedge Q(x)
         \to Q'(x) \vee S''(y)\big)
  \phantom{\Big);}
  \\
  &
  & 
    \to
    \big(\exists y\,(y\lhd x\wedge \wall(y))\wedge \nxt(x)\wedge \exists x\,(x\lhd y \wedge \abv(x)) \wedge G(y) 
         \to Q(x)\vee S(y)\big)
    \Big);
  \smallskip\\
\mathit{Move}_2
  & =
  & \forall x\forall y\,\Big(
    \big(\neg\wall(y)\wedge \rght(y)\wedge Q(x)
         \to Q'(x) \vee S''(y)\big)
  \\
  &
  & 
    \to
    \big(\exists y\,(y\lhd x \wedge \neg\wall(y))\wedge \nxt(x)\wedge \abv(y)
         \to Q(x)\vee S'(y)\big)
    \Big).
  \smallskip\\

\Rem{ 
\mathit{Move}_1
  & =
  & \forall x\forall y\,\Big(
    \big(\wall(y)\wedge Q(y)\to Q'(y)\big)
  \\
  &
  & \phantom{\forall x\forall y\,\Big(}  
    \to
    \big(x\preccurlyeq y\wedge \wall(x)\wedge Q(x)\wedge \abv(y)\to Q'(x)\vee S'(y)\big)
    \Big);
  \smallskip\\
\mathit{Move}_2
  & =
  & \forall x\forall y\,\Big(
    \big(\rght(y)\wedge Q(y)\to Q'(y)\vee S'(y)\big)
  \\
  &
  & \phantom{\forall x\forall y\,\Big(}  
    \to
    \big(x\preccurlyeq y\wedge \wall(x)\wedge \floor(y)\wedge \nxt(x)\wedge G(y)\to Q(x)\vee S(y)\big)
    \Big);
  \smallskip\\
\mathit{Move}_3
  & =
  & \forall x\forall y\,\Big(
    \big(\floor(y)\wedge Q(y)\to Q'(y)\big)
  \\
  &
  & \phantom{\forall x\forall y\,\Big(}  
    \to
    \big(x\preccurlyeq y\wedge \floor(x)\wedge Q(x)\wedge \rght(y)\to Q'(x)\vee S''(y)\big)
    \Big);
  \smallskip\\
\mathit{Move}_4
  & =
  & \forall x\forall y\,\Big(
    \big(S(y)\wedge Q(x)\to Q'(x)\vee S'(y)\big)
  \\
  &
  & \phantom{\forall x\forall y\,\Big(}  
    \to
    \big(x\preccurlyeq y\wedge \nxt(x)\wedge G(y)\to Q(x)\vee S(y)\vee \wall(y)\big)
    \Big);
  \smallskip\\
\mathit{Move}_5
  & =
  & \forall x\forall y\,\Big(
    \big(S'(y)\wedge Q(x)\to Q'(x)\vee S''(y)\big)
  \\
  &
  & \phantom{\forall x\forall y\,\Big(}  
    \to
    \big(x\preccurlyeq y\wedge\wall(y)\wedge \nxt(x)\wedge G(y)\to Q(x)\vee S(y)\big)
    \Big);
  \smallskip\\
} 
\end{array}
$$

Then, let us define $\mathit{Grid}$\/ by
$$
\begin{array}{rcl}
\mathit{Grid} & = & \mathit{DL}\wedge\mathit{FRAW}.
\end{array}
$$

The next step is to describe a $T$-tiling, with $T=\{t_0,\ldots,t_n\}$, satisfying \eqref{eq:T1} and~\eqref{eq:T2}. To that end, let us use new unary predicate letters $P_0,\ldots,P_n$. Define $\mathit{Tiling}_T$ to be a conjunction of the following formulas:
$$
\begin{array}{rcl}
\parbox{\templengtha}{{}\hfill$T_0$} 
  & = 
  & \parbox{\templengthb}{$\displaystyle\forall x\,\bigvee\limits_{\mathclap{i=0}}^{n}\Big(P_i(x) \wedge 
               \bigwedge\limits_{\mathclap{j\ne i}}^{n}\neg P_j(x) \Big)$;}
  \smallskip\\
T_1 
  & = 
  & \displaystyle
    \forall x\forall y\,\bigwedge\limits_{\mathclap{i=0}}^{n}\Big(
    \bigvee \{P_j(y) : \rightsq t_i\ne \leftsq t_j\}
    \to
    \big(
      \rght(y) \wedge Q(x)\wedge P_i(x) 
      \to Q'(x)\vee S''(y)
    \big)
    \Big);
  \smallskip\\
T_2 
  & = 
  & \displaystyle
    \forall x\forall y\,\bigwedge\limits_{\mathclap{i=0}}^{n}\Big(
    \bigvee \{P_j(y) : \upsq t_i\ne \downsq t_j\}
    \to
    \big(
      \abv(y) \wedge Q(x)\wedge P_i(x) 
      \to Q'(x)\vee S'(y)
    \big)
    \Big).
  \smallskip\\
\end{array}
$$

And the final touch of the construction is the formula
$$
\begin{array}{lcl}
\mathit{Refute}
  & = 
  & \forall x\forall y\,\big( 
                x\lhd y\wedge \wall(x)\wedge \floor(x)\wedge Q(x) 
                \to 
                Q'(x)\vee S''(y) 
                \big).
  \smallskip\\
\end{array}
$$

Now, we are ready to define a formula $\varphi_T$ whose refutability on the class of linear Kripke frames, as we will see below, means that there exists a $T$-tiling satisfying \eqref{eq:T1} and \eqref{eq:T2}.~Let
$$
\begin{array}{lcl}
\varphi_T & = & \mathit{Grid}\wedge \mathit{Tiling}_T \to \mathit{Refute}.
\end{array}
$$

\Rem{
\begin{figure}
\centering
\begin{tikzpicture}[scale=1.64]

\foreach \y in {0,...,4}
{
  \draw [] (-1.25,\y+0.5) circle [radius=0.175];
  \node [left] at (-1.32,\y+0.36) {${\y}$};
  \draw [>=latex,->] (-1.25,\y+0.7)--(-1.25,\y+1.3);
}
\node [] at (-1.25,5.96) {$\vdots$};
\foreach \x in {0,...,4} \node [] at (\x+0.5,5.96) {$\vdots$};
\foreach \y in {0,...,4} \node [] at (5.96,\y+0.5) {$\cdots$};
  
\foreach \y in {0,...,3}
  \foreach \x in {0,...,\y}
    \filldraw [black!2.5] (\x+0.1,1+\y+0.1)--(\x+0.1,1+\y+0.9)--(\x+0.9,1+\y+0.9)--(\x+0.9,1+\y+0.1)--cycle;


\foreach \x in {0,...,5}
  \draw [black!25] (\x,-0.75)--(\x,5);
\foreach \y in {0,...,5}
  \draw [black!25] (-0.75,\y)--(5,\y);
\foreach \x in {0,...,4}
  \node [] at (\x+0.5,-1) {${\x}$};
  
\foreach \x in {0,...,4}
  \draw [black] (\x+0.1,\x+0.1)--(\x+0.1,\x+0.9)--(\x+0.9,\x+0.9)--(\x+0.9,\x+0.1)--cycle;

\node [] at (0.5,2.5) 
  {\begin{tabular}{l}
   \begin{tabular}{llll}
    $S''$ & \!\!\!\!\!$S'$ & \!\!\!\!\!$S$ & \!\!\!\!\!$G$ \\
          & \!\!\!\!\!$Q'$  & \!\!\!\!\!$Q$  & \!\!\!\!\!n   \\
    w   & \!\!\!\!\!f  & \!\!\!\!\!r & \!\!\!\!\!a \\
   \end{tabular} \\
   ~\,$P_{f(\rzpair(0))}$
   \end{tabular}
  };

\node [] at (0.5,0.5) {wf};
\node [] at (1.5,0.5) {fr};
\node [] at (2.5,0.5) {\phantom{f}wa\phantom{f}};
\node [] at (3.5,0.5) {f};

\Rem{
\node [] at (0.5,0.5) {wf};
\node [] at (1.5,1.5) {f};
\node [] at (2.5,2.5) {w};
\node [] at (3.5,3.5) {f};
}

\node [] at (3.5,1.5) {r};
\node [] at (4.5,1.5) {a};
\node [] at (4.5,2.5) {r};

\Rem{
\node [] at (0.5,-0.5) {\gw};
\node [] at (0.5,-1.5) {\gf};
\node [] at (0.5,-2.5) {\rr};
\node [] at (0.5,-3.5) {\ra};
\node [] at (1.5,-0.5) {\rw};
\node [] at (1.5,-1.5) {\gf};
\node [] at (1.5,-2.5) {\gr};
\node [] at (1.5,-3.5) {\ra};
\node [] at (2.5,-0.5) {\gw};
\node [] at (2.5,-1.5) {\rf};
\node [] at (2.5,-2.5) {\rr};
\node [] at (2.5,-3.5) {\ga};
\node [] at (3.5,-0.5) {\rw};
\node [] at (3.5,-1.5) {\gf};
\node [] at (3.5,-2.5) {};
\node [] at (3.5,-3.5) {\ra};
\node [] at (4.5,-0.5) {\rw};
\node [] at (4.5,-1.5) {\rf};
\node [] at (4.5,-2.5) {};
\node [] at (4.5,-3.5) {};
\node [] at (5.5,-0.5) {\gw};
\node [] at (5.5,-1.5) {\rf};
\node [] at (5.5,-2.5) {\rr};
\node [] at (5.5,-3.5) {};
\node [] at (6.5,-0.5) {\rw};
\node [] at (6.5,-1.5) {\gf};
\node [] at (6.5,-2.5) {};
\node [] at (6.5,-3.5) {\ra};
\node [] at (7.5,-0.5) {\rw};
\node [] at (7.5,-1.5) {\rf};
\node [] at (7.5,-2.5) {};
\node [] at (7.5,-3.5) {};
\node [] at (8.5,-0.5) {\rw};
\node [] at (8.5,-1.5) {\rf};
\node [] at (8.5,-2.5) {};
\node [] at (8.5,-3.5) {};
\node [] at (9.5,-0.5) {\gw};
\node [] at (9.5,-1.5) {\rf};
\node [] at (9.5,-2.5) {\rr};
\node [] at (9.5,-3.5) {};
\node [] at (10.5,-0.5) {\rw};
\node [] at (10.5,-1.5) {\gf};
\node [] at (10.5,-2.5) {};
\node [] at (10.5,-3.5) {\ra};
\node [] at (11.5,-0.5) {\rw};
\node [] at (11.5,-1.5) {\rf};
\node [] at (11.5,-2.5) {};
\node [] at (11.5,-3.5) {};
\node [] at (12.5,-0.5) {\rw};
\node [] at (12.5,-1.5) {\rf};
\node [] at (12.5,-2.5) {};
\node [] at (12.5,-3.5) {};
\node [] at (13.5,-0.5) {\rw};
\node [] at (13.5,-1.5) {\rf};
\node [] at (13.5,-2.5) {};
\node [] at (13.5,-3.5) {};
\node [] at (14.5,-0.5) {\gw};
\node [] at (14.5,-1.5) {\rf};
\node [] at (14.5,-2.5) {\rr};
\node [] at (14.5,-3.5) {};
\node [] at (15.5,-0.5) {\rw};
\node [] at (15.5,-1.5) {\gf};
\node [] at (15.5,-2.5) {};
\node [] at (15.5,-3.5) {\ra};
\node [] at (16.5,-0.5) {\rw};
\node [] at (16.5,-1.5) {\rf};
\node [] at (16.5,-2.5) {};
\node [] at (16.5,-3.5) {};
\node [] at (17.5,-0.5) {\rw};
\node [] at (17.5,-1.5) {\rf};
\node [] at (17.5,-2.5) {};
\node [] at (17.5,-3.5) {};
\node [] at (18.5,-0.5) {\rw};
\node [] at (18.5,-1.5) {\rf};
\node [] at (18.5,-2.5) {};
\node [] at (18.5,-3.5) {};
\node [] at (19.5,-0.5) {\rw};
\node [] at (19.5,-1.5) {\rf};
\node [] at (19.5,-2.5) {};
\node [] at (19.5,-3.5) {};
\node [] at (20.5,-0.5) {\gw};
\node [] at (20.5,-1.5) {\rf};
\node [] at (20.5,-2.5) {\rr};
\node [] at (20.5,-3.5) {};
\node [] at (21.5,-0.5) {\rw};
\node [] at (21.5,-1.5) {\gf};
\node [] at (21.5,-2.5) {};
\node [] at (21.5,-3.5) {\ra};
\node [] at (22.5,-0.5) {\rw};
\node [] at (22.5,-1.5) {\rf};
\node [] at (22.5,-2.5) {};
\node [] at (22.5,-3.5) {};
}
\end{tikzpicture}
\caption{Simulating a tiling}
\label{fig:3}
\end{figure}
}

Let $\kframe{G}=\otuple{\numN,\leqslant}$.

\begin{lemma}
\label{lem:1:main}
If there exists a\/ $T$-tiling satisfying\/~\eqref{eq:T1}\/ and\/ \eqref{eq:T2}, then\/ $\kframe{G}\odot\numN,0\not\models\varphi_T$.
\end{lemma}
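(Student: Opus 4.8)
The plan is to turn a given $T$-tiling $f$ satisfying \eqref{eq:T1} and \eqref{eq:T2} into a single Kripke model $\kModel{M}=\langle\numN,\leqslant,\numN,I\rangle$ based on $\kframe{G}\odot\numN$ and to show $\kModel{M},0\models\mathit{Grid}\wedge\mathit{Tiling}_T$ while $\kModel{M},0\not\models\mathit{Refute}$; since $\varphi_T=\mathit{Grid}\wedge\mathit{Tiling}_T\to\mathit{Refute}$ and truth is persistent in intuitionistic models, this gives $\kModel{M},0\not\models\varphi_T$. Throughout I identify a domain element $k\in\numN$ with the grid cell $\rzpair(k)=\langle i_k,j_k\rangle$, so that $f$ assigns to $k$ the tile $f(i_k,j_k)$.

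First I would fix all world-independent interpretations. I put $k\lhd l$ (at every world) iff $l=k+1$, so that $\lhd$ is the successor relation; then $\mathit{Serial}_\lhd$ is immediate and every antecedent $x\lhd y$ occurring in $\mathit{DL}$ and $\mathit{FRAW}$ simply instantiates $y=x+1$. I make $\wall$ and $\floor$ stable, with $\wall(k)$ true iff $i_k=0$ and $\floor(k)$ true iff $j_k=0$; stability validates $\mathit{EM}_W$, and a short check against \eqref{eq:pair:1} shows that $\mathit{Conn}_1$ and the wall/floor clauses of $\mathit{Conn}_2$ and $\mathit{Start}_\lhd$ hold. Finally I set $P_i(k)$ true (everywhere) iff $f(i_k,j_k)=t_i$; since each cell carries exactly one tile, $T_0$ holds and every $\neg P_j(k)$ is genuinely forced.

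The core of the argument is the \emph{double labeling}: I must choose, for each hereditary letter $\nxt,Q,Q',G,S,S',S''$ and each pointer $\rght,\abv$, the world at which it switches on for the element $k$; write $n_k,q_k,q'_k,g_k,s_k,s'_k,s''_k$ for these worlds for $\nxt,Q,Q',G,S,S',S''$ respectively. Reading $x\lhd y$ as $y=k{+}1$, the formulas $\mathit{Diag}_N,\mathit{Diag}_Q,\mathit{Diag}_G,\mathit{Diag}_S$ become the recurrences $n_{k+1}=q_k$, $q_{k+1}=q'_k$, $g_{k+1}=s_k$, $s_{k+1}=s'_k$, $s'_{k+1}=s''_k$. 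These are solved by a diagonal staircase (for instance $\nxt,Q,Q'$ switching on at $k,k{+}1,k{+}2$ and $G,S,S',S''$ at $k,k{+}1,k{+}2,k{+}3$), which I would refine along the enumeration so that $\rght(k)$ and $\abv(k)$ switch on only where $\mathit{Conn}_3$ (so that $\rght\Rightarrow S'$ and $\abv\Rightarrow S$), the remaining clauses of $\mathit{Conn}_2$ (using $\Abv(k)=\Rght(k)+1$ from \eqref{eq:coonection:right-above-a}) and $\mathit{Start}_\lhd$ demand. With the switch-on worlds fixed, $\mathit{Agree}_S,\mathit{Agree}_G,\mathit{Agree}_\lhd$ and both $\mathit{Move}$-formulas reduce to comparisons of switch-on worlds, and $T_1,T_2$ follow from \eqref{eq:T1} and \eqref{eq:T2}: because $f$ matches, for the genuine right/above neighbour $y$ of $x$ the tile actually on $y$ is \emph{excluded} from the disjunction $\bigvee\{P_j(y):\cdot\}$, so that antecedent is false at $y$ and the formula holds vacuously.

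To refute $\mathit{Refute}$ at $0$ I use the unique wall-and-floor cell $k=0$ with $y=1$: with the staircase, $Q(0)$ is on from world $q_0$ on, whereas $Q'(0)$ switches on only at $q'_0>q_0$ and $S''(1)$ only at $s''_1>q_0$; hence at world $q_0$ the antecedent $0\lhd 1\wedge\wall(0)\wedge\floor(0)\wedge Q(0)$ holds while the conclusion $Q'(0)\vee S''(1)$ fails, so the matrix of $\mathit{Refute}$ is false at $0$ under $x\mapsto0,\,y\mapsto1$. The step I expect to be the main obstacle is the third paragraph: choosing the switch-on worlds so that \emph{all} conjuncts of $\mathit{DL}$ and $\mathit{FRAW}$ hold at once, and in particular verifying the nested, variable-reusing implications of $\mathit{Move}_1,\mathit{Move}_2$ under the two-variable discipline (where the inner $\exists y$ and $\exists x$ rebind the outer variables), so that the localization supplied by the double labeling is tight enough that the propagation driven by $T_1,T_2$ never switches $Q'(0)$ or $S''(1)$ on at $q_0$.
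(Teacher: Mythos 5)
Your plan is essentially the paper's proof. The paper constructs exactly the model you describe: constant domain $\numN$ over $\otuple{\numN,\leqslant}$, $\lhd$ interpreted as successor at every world, stable $\wall$, $\floor$, and $P_i$ read directly off $i_a$, $j_a$, and the tiling, hereditary ``staircase'' interpretations of the labeling letters ($\nxt$, $Q$, $Q'$ switching on for element $a$ at worlds $a-1$, $a$, $a+1$), verification of the long implications in $\mathit{DL}$, $\mathit{FRAW}$, $T_1$, $T_2$ by the same vacuous-truth device you use (refute the conclusion, conclude the premise is refuted because the matching condition \eqref{eq:T1}/\eqref{eq:T2} excludes the actual tile of the genuine neighbour from the disjunction), and refutation of $\mathit{Refute}$ at world $0$ under $x\mapsto 0$, $y\mapsto 1$.

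The one place where you stop short is the interpretation of $G$, $S$, $S'$, $S''$ (and hence of $\abv$, $\rght$), and your illustrative instance is not a valid one: a uniform staircase with $S'(a)$ switching on at world $a+2$ already fails at the root, since $\mathit{Start}_\lhd$ forces $\rght(1)$ to hold at world $0$, whence $\mathit{Conn}_3$ forces $S'(1)$, and $\mathit{Conn}_2$ with $\mathit{Conn}_3$ force $\abv(2)$ and $S(2)$, all at world $0$. The constraints you list do pin down the intended solution, which the paper gives in closed form: the $S$-family is anchored not to the world $w$ itself but to its ``above'' pointer along Cantor's enumeration, namely $\kModel{M},w\models S(a)$ iff $a\leqslant\nmbr(i_w,j_w+1)$, with $S'$, $S''$, $G$ obtained by the offsets $-1$, $-2$, $+1$, and with $\abv(a)$ defined as $S(a)$ together with the stable failure of $\floor(a)$, and $\rght(a)$ as $S'(a)$ together with the stable failure of $\wall(a)$. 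With this choice your Diag recurrences hold, the Agree and Move formulas reduce to the comparisons of switch-on worlds you anticipate (e.g.\ in $\mathit{Agree}_S$ the two inner implications cannot both fail for the same pair, since the first pins $y$ to $\Abv(x)$ and the second to $\Abv(x)-1$), and $\mathit{Refute}$ fails at world $0$ because $\Abv(0)=2$ makes $S''(1)$ false there. The remaining verification is precisely the routine check the paper leaves to the reader, so your proposal is the paper's argument with this single ingredient still to be supplied.
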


\begin{proof}
Let $f\colon \numN\times\numN\to T$ be a $T$-tiling. Let $\kModel{M}=\otuple{\kframe{G}\odot\numN, I}$ be a model over frame $\kframe{G}\odot\numN$ satisfying, for all $w,a,b\in\numN$ and $k\in\{0,\ldots,n\}$, the following conditions:
$$
\begin{array}{lcl}
\kModel{M},w\models a\lhd b & \iff & b=a+1;
\smallskip\\
\kModel{M},w\models Q(a) & \iff & a\leqslant w;
\smallskip\\
\kModel{M},w\models Q'(a) & \iff & a \leqslant w-1;
\smallskip\\
\kModel{M},w\models \nxt(a) & \iff & a \leqslant w+1;
\smallskip\\
\kModel{M},w\models S(a) & \iff & a\leqslant\nmbr(i_w,j_w+1);
\smallskip\\
\kModel{M},w\models S'(a) & \iff & a\leqslant\nmbr(i_w,j_w+1)-1;
\smallskip\\
\kModel{M},w\models S''(a) & \iff & a\leqslant\nmbr(i_w,j_w+1)-2;
\smallskip\\
\kModel{M},w\models G(a) & \iff & a\leqslant\nmbr(i_w,j_w+1)+1;
\smallskip\\
\kModel{M},w\models \wall(a) & \iff & i_a = 0;
\smallskip\\
\kModel{M},w\models \floor(a) & \iff & j_a = 0;
\smallskip\\
\kModel{M},w\models \abv(a) & \iff & \mbox{$\kModel{M},w\models S(a)\phantom{'}$ and $\kModel{M},w\not\models \floor(a)$;}
\smallskip\\
\kModel{M},w\models \rght(a) & \iff & \mbox{$\kModel{M},w\models S'(a)$ and $\kModel{M},w\not\models \wall(a)$;}
\smallskip\\
\kModel{M},w\models P_k(a) & \iff & f(i_a,j_a) = t_k,
\smallskip\\
\end{array}
$$
see Figure~\ref{fig:2}. 

Let us give some comments to the figure.
The worlds are depicted as circles, the accessibility relation is indicated by arrows. Each cell corresponds to an individual in the domain of the world opposite which it is located. The letters `a', `f', `r', and~`w' inside a cell mean that the corresponding element, as well as all the elements above, satisfy the properties $\abv$, $\floor$, $\rght$, and $\wall$, respectively. The elements corresponding to the cells on the diagonal, as well as those above them, satisfy the property~$Q$. The ones above the diagonal satisfy the property~$Q'$. The elements immediately to the right of the diagonal ones, as well as the diagonal ones and those to the left or above of them, satisfy the property~$\nxt$. The elements corresponding to the cells with the letter~`r', as well as those to the left or above of them, satisfy the property~$S'$. The elements to the left of those marked with the letter~`r' satisfy also the property~$S''$, and the same for the elements above. The elements marked with the letter~`a' and those to the left or above of them satisfy the property~$S$. Finally, the elements immediately to the right of those marked with the letter~`a', as well as those marked with the letter~`a', and those to the left of them, satisfy the property~$G$, and the same for the elements above.
\Rem{ 
Observe that, by the definition for $\preccurlyeq$, we readily obtain also that
$$
\begin{array}{lcl}
\kModel{M},w\models a\preccurlyeq b 
  & \iff 
  & \mbox{either $a\leqslant b$ or both $a<w$ and $b<w$.}
\smallskip\\
\end{array}
$$
}

A routine check shows that then $\kModel{M},0\models\mathit{Grid}\wedge \mathit{Tiling}_T$ and $\kModel{M},0\not\models \mathit{Refute}$; we leave the details to the reader (hint: for `long' implications~--- as in $\mathit{Move}_1$, $\mathit{Move}_2$, $\mathit{T}_1$ or $\mathit{T}_2$~--- just suppose that the conclusion of the implication is refuted at a world and then show that the premise is refuted at it as well). Therefore, $\kModel{M},0\not\models\varphi_T$. 
\end{proof}

\begin{figure}
\centering
\begin{tikzpicture}[scale=0.57]

\foreach \y in {0,...,18}
{
  \draw [] (-1.25,\y+0.5) circle [radius=0.175];
  \node [left] at (-1.32,\y+0.36) {${\y}$};
  \draw [>=latex,->] (-1.25,\y+0.7)--(-1.25,\y+1.3);
}
\node [] at (-1.25,19.96) {$\vdots$};
\foreach \x in {0,...,18} \node [] at (\x+0.5,19.96) {$\vdots$};
\foreach \y in {0,...,18} \node [] at (19.96,\y+0.5) {$\cdots$};
  


\foreach \x in {0,...,19}
  \draw [black!25] (\x,-0.75)--(\x,19);
\foreach \y in {0,...,19}
  \draw [black!25] (-0.75,\y)--(19,\y);
\foreach \x in {0,...,18}
  \node [] at (\x+0.5,-1) {${\x}$};
  
\foreach \x in {0,...,18}
  \draw [black] (\x+0.1,\x+0.1)--(\x+0.1,\x+0.9)--(\x+0.9,\x+0.9)--(\x+0.9,\x+0.1)--cycle;

\node [] at (0.5,0.5) {wf};
\node [] at (1.5,0.5) {fr};
\node [] at (2.5,0.5) {\phantom{f}wa\phantom{f}};
\node [] at (3.5,0.5) {f};
\node [] at (5.5,0.5) {\phantom{f}w\phantom{f}};
\node [] at (6.5,0.5) {f};
\node [] at (9.5,0.5) {\phantom{f}w\phantom{f}};
\node [] at (10.5,0.5) {f};
\node [] at (14.5,0.5) {\phantom{f}w\phantom{f}};
\node [] at (15.5,0.5) {f};

\Rem{
\node [] at (0.5,0.5) {wf};
\node [] at (1.5,1.5) {f};
\node [] at (2.5,2.5) {w};
\node [] at (3.5,3.5) {f};
\node [] at (5.5,5.5) {w};
\node [] at (6.5,6.5) {f};
\node [] at (9.5,9.5) {w};
\node [] at (10.5,10.5) {f};
\node [] at (14.5,14.5) {w};
\node [] at (15.5,15.5) {f};
}

\node [] at (3.5,1.5) {r};
\node [] at (4.5,1.5) {a};
\node [] at (4.5,2.5) {r};
\node [] at (5.5,2.5) {a};
\node [] at (6.5,3.5) {r};
\node [] at (7.5,3.5) {a};
\node [] at (7.5,4.5) {r};
\node [] at (8.5,4.5) {a};
\node [] at (8.5,5.5) {r};
\node [] at (9.5,5.5) {a};
\node [] at (10.5,6.5) {r};
\node [] at (11.5,6.5) {a};
\node [] at (11.5,7.5) {r};
\node [] at (12.5,7.5) {a};
\node [] at (12.5,8.5) {r};
\node [] at (13.5,8.5) {a};
\node [] at (13.5,9.5) {r};
\node [] at (14.5,9.5) {a};
\node [] at (15.5,10.5) {r};
\node [] at (16.5,10.5) {a};
\node [] at (16.5,11.5) {r};
\node [] at (17.5,11.5) {a};
\node [] at (17.5,12.5) {r};
\node [] at (18.5,12.5) {a};
\node [] at (18.5,13.5) {r};

\Rem{
\node [] at (0.5,-0.5) {\gw};
\node [] at (0.5,-1.5) {\gf};
\node [] at (0.5,-2.5) {\rr};
\node [] at (0.5,-3.5) {\ra};
\node [] at (1.5,-0.5) {\rw};
\node [] at (1.5,-1.5) {\gf};
\node [] at (1.5,-2.5) {\gr};
\node [] at (1.5,-3.5) {\ra};
\node [] at (2.5,-0.5) {\gw};
\node [] at (2.5,-1.5) {\rf};
\node [] at (2.5,-2.5) {\rr};
\node [] at (2.5,-3.5) {\ga};
\node [] at (3.5,-0.5) {\rw};
\node [] at (3.5,-1.5) {\gf};
\node [] at (3.5,-2.5) {};
\node [] at (3.5,-3.5) {\ra};
\node [] at (4.5,-0.5) {\rw};
\node [] at (4.5,-1.5) {\rf};
\node [] at (4.5,-2.5) {};
\node [] at (4.5,-3.5) {};
\node [] at (5.5,-0.5) {\gw};
\node [] at (5.5,-1.5) {\rf};
\node [] at (5.5,-2.5) {\rr};
\node [] at (5.5,-3.5) {};
\node [] at (6.5,-0.5) {\rw};
\node [] at (6.5,-1.5) {\gf};
\node [] at (6.5,-2.5) {};
\node [] at (6.5,-3.5) {\ra};
\node [] at (7.5,-0.5) {\rw};
\node [] at (7.5,-1.5) {\rf};
\node [] at (7.5,-2.5) {};
\node [] at (7.5,-3.5) {};
\node [] at (8.5,-0.5) {\rw};
\node [] at (8.5,-1.5) {\rf};
\node [] at (8.5,-2.5) {};
\node [] at (8.5,-3.5) {};
\node [] at (9.5,-0.5) {\gw};
\node [] at (9.5,-1.5) {\rf};
\node [] at (9.5,-2.5) {\rr};
\node [] at (9.5,-3.5) {};
\node [] at (10.5,-0.5) {\rw};
\node [] at (10.5,-1.5) {\gf};
\node [] at (10.5,-2.5) {};
\node [] at (10.5,-3.5) {\ra};
\node [] at (11.5,-0.5) {\rw};
\node [] at (11.5,-1.5) {\rf};
\node [] at (11.5,-2.5) {};
\node [] at (11.5,-3.5) {};
\node [] at (12.5,-0.5) {\rw};
\node [] at (12.5,-1.5) {\rf};
\node [] at (12.5,-2.5) {};
\node [] at (12.5,-3.5) {};
\node [] at (13.5,-0.5) {\rw};
\node [] at (13.5,-1.5) {\rf};
\node [] at (13.5,-2.5) {};
\node [] at (13.5,-3.5) {};
\node [] at (14.5,-0.5) {\gw};
\node [] at (14.5,-1.5) {\rf};
\node [] at (14.5,-2.5) {\rr};
\node [] at (14.5,-3.5) {};
\node [] at (15.5,-0.5) {\rw};
\node [] at (15.5,-1.5) {\gf};
\node [] at (15.5,-2.5) {};
\node [] at (15.5,-3.5) {\ra};
\node [] at (16.5,-0.5) {\rw};
\node [] at (16.5,-1.5) {\rf};
\node [] at (16.5,-2.5) {};
\node [] at (16.5,-3.5) {};
\node [] at (17.5,-0.5) {\rw};
\node [] at (17.5,-1.5) {\rf};
\node [] at (17.5,-2.5) {};
\node [] at (17.5,-3.5) {};
\node [] at (18.5,-0.5) {\rw};
\node [] at (18.5,-1.5) {\rf};
\node [] at (18.5,-2.5) {};
\node [] at (18.5,-3.5) {};
\node [] at (19.5,-0.5) {\rw};
\node [] at (19.5,-1.5) {\rf};
\node [] at (19.5,-2.5) {};
\node [] at (19.5,-3.5) {};
\node [] at (20.5,-0.5) {\gw};
\node [] at (20.5,-1.5) {\rf};
\node [] at (20.5,-2.5) {\rr};
\node [] at (20.5,-3.5) {};
\node [] at (21.5,-0.5) {\rw};
\node [] at (21.5,-1.5) {\gf};
\node [] at (21.5,-2.5) {};
\node [] at (21.5,-3.5) {\ra};
\node [] at (22.5,-0.5) {\rw};
\node [] at (22.5,-1.5) {\rf};
\node [] at (22.5,-2.5) {};
\node [] at (22.5,-3.5) {};
}
\end{tikzpicture}
\caption{Simulating a tiling}
\label{fig:2}
\end{figure}

\begin{lemma}
\label{lem:2:main}
Let\/ $\kframe{F}=\otuple{W,R}$ be a Kripke frame validating\/ $\logic{QLC}$ such that $\kframe{F}\not\models\varphi_T$.
Then there exists a $T$-tiling satisfying\/~\eqref{eq:T1}~and\/~\eqref{eq:T2}.
\end{lemma}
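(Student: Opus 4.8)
The plan is to run the reduction backwards: from a model refuting $\varphi_T$ I reconstruct an $\numN\times\numN$ tiling by reading the tile letters off an enumeration of the domain that I build by hand. First I would unpack the hypothesis $\kframe{F}\not\models\varphi_T$. Since $\varphi_T=\mathit{Grid}\wedge\mathit{Tiling}_T\to\mathit{Refute}$, there are a Kripke model $\kModel{M}$ based on $\kframe{F}$, a world $w_*$, and an assignment with $\kModel{M},w_*\models\mathit{Grid}\wedge\mathit{Tiling}_T$ but $\kModel{M},w_*\not\models\mathit{Refute}$. Each conjunct of $\mathit{Grid}\wedge\mathit{Tiling}_T$ has the shape $\forall\bar z\,\psi$, hence stays true at every world above $w_*$. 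Because $\kframe{F}\models\logic{QLC}$, a nullary instance of the $\logic{LC}$-axiom is validated, so $\kframe{F}$ is a partial order whose every cone $R(w)$ is linearly ordered; replacing $\kModel{M}$ by the submodel generated by $w_*$, I may assume the frame is a single chain with least point $w_*$. Unwinding the failure of the universally quantified implication $\mathit{Refute}$ yields (after renaming) a world $r$ and elements $a_0,b_0\in D_r$ with $a_0\lhd b_0$ at which $\wall(a_0)$, $\floor(a_0)$, $Q(a_0)$ hold while both $Q'(a_0)$ and $S''(b_0)$ fail. This is the \emph{seed}: by \eqref{eq:pair:wall} and \eqref{eq:pair:floor} it pins $a_0$ to the grid corner, i.e. to $\nmbr$-index $0$, with $b_0$ its intended successor.

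Next I would construct, by induction on $k\in\numN$, a strictly ascending sequence of worlds $r=w_0<w_1<w_2<\dots$ in the chain, together with elements $a_0,a_1,a_2,\dots$ satisfying the invariants: $a_{k+1}$ is a $\lhd$-successor of $a_k$ (furnished by $\mathit{Serial}_\lhd$); $a_k$ is \emph{distinguished} at $w_k$ in the earlier sense, i.e. $Q(a_k)$ is true but $Q'(a_k)$ is false at $w_k$; and the letters $\wall,\floor,\rght,\abv,S,S',S'',G,\nxt$ take at $w_k$ exactly the values dictated by the Cantor data $\otuple{i_k,j_k}$, $\Rght(k)$, $\Abv(k)$. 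The $\mathit{Diag}$ equivalences transport the labels rigidly along $\lhd$, so that the $Q$-frontier advances by one exactly when $Q'$ catches up, while $\mathit{Conn}_1$--$\mathit{Conn}_3$, $\mathit{Start}_\lhd$ and $\mathit{EM}_W$ force $\rght,\abv,\wall,\floor$ to agree with $\Rght,\Abv,\Wall,\Floor$. The engine of the step $k\mapsto k+1$ is the pair $\mathit{Move}_1,\mathit{Move}_2$, whose $\Wall$/$\neg\Wall$ alternative mirrors the case split of \eqref{eq:coonection:right-above}: being true throughout the model, they turn the unresolved instance carried at index $k$ into one at index $k+1$, necessarily witnessed at a \emph{strictly higher} world $w_{k+1}$ at which $Q'(a_k)$ and $Q(a_{k+1})$ have become true while $Q'(a_{k+1})$ is still false; the one-step shift \eqref{eq:coonection:right-above-a} between $\Rght$ and $\Abv$ is carried by $\mathit{Diag}_S$ and $\mathit{Diag}_G$. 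Since every step strictly ascends, the chain is genuinely infinite and the enumeration exhausts $\numN$.

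With the enumeration in hand I would define the tiling. By $T_0$ exactly one letter $P_m$ holds of $a_k$ at $w_k$; set $f(\rzpair(k))=t_m$, which determines a total $f\colon\numN\times\numN\to T$. To verify \eqref{eq:T1} I use $T_1$ together with the element $a_{\Rght(k)}$, which by construction represents the right neighbour $\rzpair(\Rght(k))$ of $\rzpair(k)$: were $\rightsq t_i\ne\leftsq t_j$ for the tiles $t_i,t_j$ recorded at $k$ and at $\Rght(k)$, the disjunctive trigger of $T_1$ would fire at the relevant world and force the resolution $Q'\vee S''$ that the invariant keeps refuted, a contradiction; hence the horizontal colours match. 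Condition \eqref{eq:T2} is obtained symmetrically from $T_2$ via $a_{\Abv(k)}$ and the $S'$-resolution. This produces a $T$-tiling satisfying \eqref{eq:T1} and \eqref{eq:T2}.

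The main obstacle is the inductive step: showing that the two-variable ``long implications'' $\mathit{Move}_1,\mathit{Move}_2,T_1,T_2$ --- whose inner quantifiers rebind $x$ and $y$ to save variables --- really advance the $Q$-frontier by exactly one and keep every label synchronized with the Cantor enumeration, neither over- nor under-shooting. Two features make this delicate. The heredity condition means a label can only pass from false to true as one ascends, so each ``advance'' must be phrased as a currently-false label being forced true strictly higher up, and I must check that no label turns true prematurely, which is precisely the role of $\mathit{Agree}_S$, $\mathit{Agree}_G$ and $\mathit{Agree}_\lhd$. Moreover the strict ascent $w_k<w_{k+1}$ --- the very thing that extracts an \emph{infinite} tiling from a possibly exotic $\logic{QLC}$-frame --- has to be read out of the unresolvability of the seed rather than assumed of the frame. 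Carrying out these synchronizations conjunct by conjunct is the technical heart of the argument; once they are in place, the read-off of $f$ and the verification of the matching conditions are comparatively routine.
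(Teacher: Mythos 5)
Your proposal follows essentially the same route as the paper's proof: unwind the failure of $\mathit{Refute}$ into the seed $w_0$ with $a_0\lhd a_1$, generate the $\lhd$-chain $(a_k)_{k\in\numN}$ by $\mathit{Serial}_\lhd$, inductively synchronize the labels with the Cantor enumeration using $\mathit{Move}_1/\mathit{Move}_2$ under the $\Wall$/$\neg\Wall$ case split together with the $\mathit{Diag}$, $\mathit{Conn}$, $\mathit{EM}_W$ and $\mathit{Agree}$ formulas, then read off the tiling via $T_0$ and verify \eqref{eq:T1} and \eqref{eq:T2} via $T_1$ and $T_2$. The only difference is organizational: the paper packages your running invariant at ascending witness worlds as Sublemma~\ref{sublem:lem:2:main}, defining $\Rght'$ and $\Abv'$ as the minimal indices \eqref{eq:pair:right'}--\eqref{eq:pair:above'} of instances refuted over the fixed world $w_0$ and proving $\Rght'=\Rght$, $\Abv'=\Abv$ (with $\Wall'$ agreeing up to $\Abv(k)$) by induction --- exactly the ``technical heart'' you correctly identify, including the roles of $\mathit{Agree}_S$, $\mathit{Agree}_G$, $\mathit{Agree}_\lhd$ in ruling out the undershoot cases $m'=m$ and $m'=m-1$, but leave at sketch level.
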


\begin{proof}
Let $\kModel{M}=\otuple{W,R,D,I}$ and $w^\ast$ be, respectively, a model over $\kframe{F}$ and a world of $\kframe{F}$ such that
\begin{equation}
\label{eq:lem:2:main:0}
\begin{array}{lcl}
\kModel{M},w^\ast \models \mathit{Grid}\wedge \mathit{Tiling}_T 
  & \mbox{and}
  & \kModel{M},w^\ast\not\models \mathit{Refute}.
\end{array}
\end{equation}

Since $\kModel{M},w^\ast\not\models \mathit{Refute}$, there exist $w_0\in R(w^\ast)$ and $a_0,a_1\in D_{w_0}$ such that 
\begin{equation}
\label{eq:lem:2:main:1}
\kModel{M},w_0\models a_0\lhd a_1\wedge \wall(a_0)\wedge \floor(a_0)\wedge Q(a_0) 
\end{equation}
and
\begin{equation}
\label{eq:lem:2:main:2}
\kModel{M},w_0\not\models Q'(a_0)\vee S''(a_1).
\end{equation}
By $\mathit{Serial}_{\lhd}$, there exist also $a_2,a_3,a_4,\ldots\in D_{w_0}$ such that, for every $k\in\numN$,
\begin{equation}
\label{eq:lem:2:main:3}
\kModel{M},w_0\models a_k\lhd a_{k+1}.
\end{equation}

By $T_0$, for every $k\in\numN$ there exists a unique $s\in\{0,\ldots,n\}$ such that $\kModel{M},w_0\models P_s(a_k)$. Let us define a $T$-tiling $f\colon\numN\times\numN\to T$ as follows: for all $i,j\in\numN$ and $s\in\{0,\ldots,n\}$,
\begin{equation}
\label{eq:lem:2:main:5}
\begin{array}{lcl}
f(i,j)=t_s 
  & \bydef 
  & \mbox{$\kModel{M},w_0\models P_s(a_{\nmbr(i,j)})$.}
\end{array}
\end{equation}
We are going to show that the $T$-tiling defined by~\eqref{eq:lem:2:main:5}
satisfies conditions~\eqref{eq:T1}~and\/~\eqref{eq:T2}. 

To that end, let us introduce two functions, $\Rght'\colon\numN\to\numN$ and $\Abv'\colon\numN\to\numN$, defined as follows: for every $k\in\numN$,
\settowidth{\templength}{\mbox{$\abv(a_m)$}}
\begin{flalign}
&\Rght'(k) 
   ~~=~~ 
  \min\{m\in\numN : \mbox{$\kModel{M},w_0\not\models Q(a_k)\wedge \parbox{\templength}{{\hfill}$\rght\hfill(a_m)$}\to Q'(a_k)\vee S''(a_m)$}\}; 
  \label{eq:pair:right'}
  \\ 
&\Abv'(k) 
   ~~=~~ 
  \min\{m\in\numN : \mbox{$\kModel{M},w_0\not\models Q(a_k)\wedge \abv(a_m)\to Q'(a_k)\vee S'\phantom{'}(a_m)$}\}. 
  \label{eq:pair:above'}
\end{flalign} 
Also let, for every $k\in\numN$, 
\begin{equation}
\label{eq:pair:wall'}
\begin{array}{lcl}
\Wall'(k) 
  & \bydef 
  & \kModel{M},w_0\models \wall(a_k).
\end{array}
\end{equation}

\Rem{
Of course, we have to show that both the functions are well-defined. We are going to prove a stronger statement: for every $k\in\numN$,
\begin{flalign}
&\Rght'(k) ~~=~~ \Rght(k); 
  \label{eq:pair:right=right'}
  \\ 
&\Abv'(k)  ~~=~~ \Abv(k). 
  \label{eq:pair:above=above'}
\end{flalign} 
For these purposes, let us introduce, for every $r\in\numN$, two auxiliary functions, $\Rght'_r\colon\{0,\ldots,r\}\to\numN$ and $\Abv'_r\colon\{0,\ldots,r\}\to\numN$, whose definitions are similar to \eqref{eq:pair:right'} and \eqref{eq:pair:above'}: for every $k\in\{0,\dots,r\}$ and $m\in\numN$,
\settowidth{\templength}{\mbox{$\abv(a_m)$}}
\begin{flalign}
&\Rght'_r(k) = m 
  \quad \bydef \quad 
  \mbox{$\kModel{M},w_0\not\models Q(a_k)\wedge \parbox{\templength}{{\hfill}$\rght\hfill(a_m)$}\to Q'(a_k)\vee S''(a_m)$;} 
  \label{eq:pair:right'r}
  \\ 
&\Abv'_r(k)  = m 
  \quad \bydef \quad 
  \mbox{$\kModel{M},w_0\not\models Q(a_k)\wedge \abv(a_m)\to Q'(a_k)\vee S'\phantom{'}(a_m)$.} 
  \label{eq:pair:above'r}
\end{flalign} 
} 

\begin{sublemma}
\label{sublem:lem:2:main}
For every\/ $k\in\numN$, both\/ $\Rght'(k) = \Rght(k)$ and\/ $\Abv'(k)  = \Abv(k)$.
\end{sublemma}

\begin{proof}
We prove the statement by induction on $k$. Exactly, we prove, for every $k\in\numN$, the following three statements:
\settowidth{\templength}{\mbox{$k$}}
\begin{flalign}
&\Rght'(k) ~~=~~ \Rght(k); \label{eq:1i:sublem:lem:2:main}\\
&\Abv'(k)  ~~=~~ \Abv(k); \label{eq:2i:sublem:lem:2:main}\\
&\phantom{\textnormal{\texttt{w}}}\Wall'(\parbox{\templength}{\centering$r$}) ~~=~~ \Wall(\parbox{\templength}{\centering$r$}),\phantom{\textnormal{\texttt{w}}} \qquad \mbox{where $0\leqslant r < \Abv(k)$.} \label{eq:3i:sublem:lem:2:main}
\end{flalign} 

Induction base:\, Let us prove \eqref{eq:1i:sublem:lem:2:main}--\eqref{eq:3i:sublem:lem:2:main} for $k=0$.
By~\eqref{eq:lem:2:main:1}, $\kModel{M},w_0\models \wall(a_0)\wedge \floor(a_0)$. Hence, by $\mathit{Start}_\lhd$ and $\mathit{Conn}_2$, 
\begin{equation}
\label{eq:1:sublem:lem:2:main}
\kModel{M},w_0\models \floor(a_1)\wedge\rght(a_1)\wedge\abv(a_2).
\end{equation}
Then, by~\eqref{eq:lem:2:main:1}, \eqref{eq:lem:2:main:2}, and~\eqref{eq:1:sublem:lem:2:main},
\begin{equation}
\label{eq:2:sublem:lem:2:main}
\kModel{M},w_0\not\models Q(a_0)\wedge \rght(a_1)\to Q'(a_0)\vee S''(a_1).
\end{equation}
Notice that, in~\eqref{eq:2:sublem:lem:2:main}, $a_1$ can not be replaced by $a_0$. Indeed,  $\kModel{M},w_0\models \wall(a_0)$ implies, by $\mathit{Conn}_1$, that $\kModel{M},w_0\models \neg\rght(a_0)$; but then $\kModel{M},w_0\models Q(a_0)\wedge \rght(a_0)\to Q'(a_0)\vee S''(a_0)$. Hence,
$$
\begin{array}{lclcl}
\Rght'(0) & = & 1 & = & \Rght(0),
\end{array}
$$
i.e., \eqref{eq:1i:sublem:lem:2:main} is proved.
By~\eqref{eq:lem:2:main:2}, $\kModel{M},w_0\not\models S''(a_1)$; then $\kModel{M},w_0\not\models S'(a_2)$ by $\mathit{Diag}_S$. Applying~\eqref{eq:lem:2:main:1} and~\eqref{eq:1:sublem:lem:2:main}, we obtain
\begin{equation}
\label{eq:3:sublem:lem:2:main}
\kModel{M},w_0\not\models Q(a_0)\wedge \abv(a_2)\to Q'(a_0)\vee S'(a_2).
\end{equation}
Notice that in~\eqref{eq:3:sublem:lem:2:main}, $a_2$ can not be replaced by either $a_0$ or~$a_1$. Indeed, $\kModel{M},w_0\models \floor(a_0)\wedge \floor(a_1)$ implies, again by $\mathit{Conn}_1$, that $\kModel{M},w_0\models \neg\abv(a_0)$ and $\kModel{M},w_0\models \neg\abv(a_1)$; but then both $\kModel{M},w_0\models Q(a_0)\wedge \abv(a_0)\to Q'(a_0)\vee S'(a_0)$ and $\kModel{M},w_0\models Q(a_0)\wedge \abv(a_1)\to Q'(a_0)\vee S'(a_1)$. Hence,
$$
\begin{array}{lclcl}
\Abv'(0) & = & 2 & = & \Abv(0),
\end{array}
$$ 
i.e., \eqref{eq:2i:sublem:lem:2:main} is proved.
To prove \eqref{eq:3i:sublem:lem:2:main}, observe that $\Wall'(0)$ by~\eqref{eq:lem:2:main:1}.
Also, by~\eqref{eq:1:sublem:lem:2:main}, $\kModel{M},w_0\models\rght(a_1)$; then $\kModel{M},w_0\not\models\wall(a_1)$ by $\mathit{Conn}_1$, and we obtain $\neg\Wall'(1)$. Thus, 
$$
\begin{array}{cccc}
\Wall'(0)=\Wall(0) 
  & \mbox{and\/} 
  & \Wall'(1)=\Wall(1), 
\end{array}
$$
i.e., \eqref{eq:3i:sublem:lem:2:main} is proved.

Induction step:\, Suppose that \eqref{eq:1i:sublem:lem:2:main}--\eqref{eq:3i:sublem:lem:2:main} hold for some $k\in\numN$; let us prove the statements for~$k+1$.
Let $\Abv'(k)=\Abv(k)=m$. By the definition for $\Abv'(k)$, i.e., by~\eqref{eq:pair:above'},
$$
\kModel{M},w_0\not\models Q(a_k)\wedge \abv(a_m)\to Q'(a_k)\vee S'(a_m).
$$
Hence, there exists a world $w\in R(w_0)$ such that
\begin{equation}
\label{eq:4:sublem:lem:2:main}
\mbox{$\kModel{M},w\models Q(a_k)\wedge \abv(a_m)$ 
\quad and \quad 
$\kModel{M},w\not\models Q'(a_k)\vee S'(a_m)$.}
\end{equation}
By $\mathit{Diag}_N$, $\mathit{Diag}_Q$, $\mathit{Diag}_S$, $\mathit{Diag}_G$, and $\mathit{Conn}_3$, it follows from~\eqref{eq:4:sublem:lem:2:main} that
\begin{flalign}
&\kModel{M},w\models \nxt(a_{k+1})\wedge S'(a_{m-1}) \wedge S(a_m) \wedge G(a_{m+1}); 
\label{eq:5:sublem:lem:2:main} 
\\
&\kModel{M},w\not\models Q(a_{k+1}) \vee S''(a_{m-1}) \vee S(a_{m+1}).
\label{eq:6:sublem:lem:2:main}
\end{flalign}
Now, let us consider two cases for~$k$: $\Wall(k)$ and $\neg\Wall(k)$.

Case $\Wall(k)$: 
In this case, by the induction hypothesis, $\Wall'(k)$, which, by~\eqref{eq:pair:wall'}, gives us $\kModel{M},w_0\models \wall(a_k)$, and hence, $\kModel{M},w\models \wall(a_k)$. Then, taking into account \eqref{eq:4:sublem:lem:2:main}--\eqref{eq:6:sublem:lem:2:main}, we readily obtain that
\begin{equation}
\label{eq:7:sublem:lem:2:main} 
\begin{array}{l}
\kModel{M},w\models
\exists y\,(y\lhd a_{k+1}\wedge \wall(y))\wedge \nxt(a_{k+1})\wedge \exists x\,(x\lhd a_{m+1} \wedge \abv(x)) \wedge G(a_{m+1}); 
\smallskip\\
\kModel{M},w\not\models Q(a_{k+1})\vee S(a_{m+1}).
\end{array}
\end{equation}
By $\mathit{Move}_1$, \eqref{eq:7:sublem:lem:2:main} implies
$$
\kModel{M},w\not\models
\forall x\,(x\lhd a_{m+1}\to \wall(x))\wedge\neg\wall(a_{m+1})\wedge \rght(a_{m+1})\wedge Q(a_{k+1})
         \to Q'(a_{k+1}) \vee S''(a_{m+1}).
$$
Hence, there exists $w'\in R(w)$ such that
\begin{equation}
\label{eq:8:sublem:lem:2:main} 
\begin{array}{l}
\kModel{M},w'\models
\wall(a_m)\wedge\neg\wall(a_{m+1})\wedge \rght(a_{m+1})\wedge Q(a_{k+1});
\smallskip\\
\kModel{M},w'\not\models Q'(a_{k+1}) \vee S''(a_{m+1}).
\end{array}
\end{equation}
Since $w_0Rw'$, \eqref{eq:8:sublem:lem:2:main} implies  
\begin{equation}
\label{eq:9:sublem:lem:2:main} 
\kModel{M},w_0\not\models Q(a_{k+1})\wedge \rght\hfill(a_{m+1})\to Q'(a_{k+1})\vee S''(a_{m+1}),
\end{equation}
and we conclude, by \eqref{eq:pair:right'}, that $\Rght'(k+1)\leqslant m+1$.
To prove that $\Rght'(k+1) = m+1$, suppose, for the sake of contradiction, that $\Rght'(k+1) = m' < m+1$. Then, for some $w''\in R(w_0)$,
\begin{equation}
\label{eq:10:sublem:lem:2:main} 
\begin{array}{lcl}
\kModel{M},w''\models
\rght(a_{m'})\wedge Q(a_{k+1})
  & \mbox{and}
  & \kModel{M},w''\not\models Q'(a_{k+1}) \vee S''(a_{m'}).
\end{array}
\end{equation}
\Rem{ 
Let us observe that $w''\not\in R(w')$. Indeed, by~\eqref{eq:8:sublem:lem:2:main}, $\kModel{M},w'\models\rght(a_{m+1})$; hence, $\kModel{M},w'\models S'(a_{m+1})$ by $\mathit{Conn}_3$. Then, $\kModel{M},w'\models S''(a_{m})$ by $\mathit{Diag}_S$. Applying $\mathit{Agree}_\lhd$, we obtain $\kModel{M},w'\models S''(a_{m'})$. Then, assuming $w''\in R(w')$, we have to conclude that $\kModel{M},w''\models S''(a_{m'})$, which contradicts \eqref{eq:10:sublem:lem:2:main}. Thus, $w''\not\in R(w')$, and hence, 
\begin{equation}
\label{eq:11:sublem:lem:2:main} 
w'\in R(w'').
\end{equation}
} 
Clearly, $w\not\in R(w'')$: it is sufficient to observe that $\kModel{M},w\not\models Q(a_{k+1})$ by \eqref{eq:8:sublem:lem:2:main}, while $\kModel{M},w''\models Q(a_{k+1})$ by \eqref{eq:10:sublem:lem:2:main}.
Hence, 
\begin{equation}
\label{eq:12:sublem:lem:2:main} 
w''\in R(w).
\end{equation}
By $\mathit{Diag}_N$, $\mathit{Diag}_Q$, $\mathit{Diag}_S$, $\mathit{Diag}_G$, and $\mathit{Conn}_3$, it follows from~\eqref{eq:10:sublem:lem:2:main} that
\begin{flalign}
&\kModel{M},w''\models S''(a_{m'-1}) \wedge S'(a_{m'}) \wedge S(a_{m'+1}) \wedge G(a_{m'+2}); 
\label{eq:13:sublem:lem:2:main} 
\\
&\kModel{M},w''\not\models S''(a_{m'}) \vee S'(a_{m'+1}) \vee S(a_{m'+2}) \vee G(a_{m'+3}).
\label{eq:14:sublem:lem:2:main}
\end{flalign}
By \eqref{eq:5:sublem:lem:2:main} and \eqref{eq:12:sublem:lem:2:main}, we readily obtain that
\begin{equation}
\label{eq:15:sublem:lem:2:main} 
\kModel{M},w''\models S'(a_{m-1}) \wedge S(a_{m}) \wedge G(a_{m+1}).
\end{equation}
Then, by $\mathit{Agree}_\lhd$ and \eqref{eq:13:sublem:lem:2:main}--\eqref{eq:15:sublem:lem:2:main}, we can conclude that $m'\geqslant m-1$. Since, by assumption, $m' < m+1$, it follows that
$$
\begin{array}{lcl}
m'=m & \mbox{or} & m'=m-1.
\end{array}
$$
Let us show that each of the cases is impossible. Observe that \eqref{eq:8:sublem:lem:2:main} implies, by $\mathit{Conn}_3$, that
\begin{equation}
\label{eq:16:sublem:lem:2:main} 
\kModel{M},w' \not\models Q(a_{k+1})\wedge S'(a_{m+1})\to Q'(a_{k+1})\vee S''(a_{m+1}).
\end{equation}
If $m'=m$, then, by \eqref{eq:10:sublem:lem:2:main}, \eqref{eq:13:sublem:lem:2:main}, and \eqref{eq:14:sublem:lem:2:main},
$$
\begin{array}{l}
\kModel{M},w''\not\models Q(a_{k+1})\wedge S(a_{m+1})\to Q'(a_{k+1})\vee S'(a_{m+1}),
\end{array}
$$
that, together with \eqref{eq:16:sublem:lem:2:main}, contradicts to $\mathit{Agree}_{S}$. 
If $m'=m-1$, then, again by \eqref{eq:10:sublem:lem:2:main}, \eqref{eq:13:sublem:lem:2:main}, and \eqref{eq:14:sublem:lem:2:main},
$$
\begin{array}{l}
\kModel{M},w''\not\models Q(a_{k+1})\wedge G(a_{m+1})\to Q'(a_{k+1})\vee S(a_{m+1}),
\end{array}
$$
that, together with \eqref{eq:16:sublem:lem:2:main}, contradicts to $\mathit{Agree}_{G}$. Thus, we have proved $\Rght'(k+1)=m+1$, i.e., $\Rght'(k+1)=\Abv(k)+1$. Hence, by \eqref{eq:coonection:right-above},
$$
\begin{array}{lcl}
\Rght'(k+1) & = & \Rght(k+1),
\end{array}
$$
i.e., \eqref{eq:1i:sublem:lem:2:main} is proved.
By $\mathit{Conn}_2$, \eqref{eq:8:sublem:lem:2:main}, $\mathit{Diag}_S$, and \eqref{eq:pair:right'}, we obtain $\Abv'(k+1) = \Rght(k+1)+1$. Hence, by \eqref{eq:coonection:right-above-a},
$$
\begin{array}{lcl}
\Abv'(k+1) & = & \Abv(k+1),
\end{array}
$$
i.e., \eqref{eq:2i:sublem:lem:2:main} is proved.
To prove \eqref{eq:3i:sublem:lem:2:main}, recall that $\Abv(k)=m$, and hence, by the induction hypothesis, $\Wall'(r)=\Wall(r)$ with $0\leqslant r < m$; thus, since $\Abv'(k+1) = \Rght(k+1)+1 = m+2$, we have to prove that $\Wall'(m)=\Wall(m)$ and $\Wall'(m+1)=\Wall(m+1)$. By \eqref{eq:8:sublem:lem:2:main}, we readily obtain $\Wall'(m)$ and $\neg\Wall'(m+1)$. From $\Wall(k)$ we obtain $i_k=0$, and then from $m=\Abv(k)$ we obtain $i_m=0$; thus, $\Wall(m)$. By \eqref{eq:pair:1}, $i_{m+1}\ne 0$; thus, $\neg\Wall(m+1)$. Hence,
$$
\begin{array}{lcl}
\Wall'(r)~=~\Wall(r), & & \mbox{where $0\leqslant r < \Abv(k+1)$,}
\end{array}
$$
i.e., \eqref{eq:3i:sublem:lem:2:main} is proved.

Case $\neg\Wall(k)$:
This case is similar and even simpler; therefore, we give a shorter proof, omitting some details.
By the induction hypothesis, $\neg\Wall'(k)$, which, by~\eqref{eq:pair:wall'}, gives us $\kModel{M},w_0\not\models \wall(a_k)$. Hence, by $\mathit{EM}_W$, $\kModel{M},w\models \neg\wall(a_k)$. From \eqref{eq:4:sublem:lem:2:main}--\eqref{eq:6:sublem:lem:2:main} we obtain
\begin{equation}
\label{eq:7':sublem:lem:2:main} 
\begin{array}{l}
\kModel{M},w\models
\exists y\,(y\lhd a_{k+1}\wedge \wall(y))\wedge \nxt(a_{k+1})\wedge \abv(a_m); 
\smallskip\\
\kModel{M},w\not\models Q(a_{k+1})\vee S'(a_{m}).
\end{array}
\end{equation}
By $\mathit{Move}_2$, \eqref{eq:7':sublem:lem:2:main} implies
$$
\kModel{M},w\not\models \neg\wall(a_m)\wedge \rght(a_m)\wedge Q(a_{k+1}) \to Q'(a_{k+1}) \vee S''(a_m).
$$
Hence, there exists $w'\in R(w)$ such that
\begin{equation}
\label{eq:8':sublem:lem:2:main} 
\begin{array}{l}
\kModel{M},w'\models
\neg\wall(a_m)\wedge \rght(a_{m})\wedge Q(a_{k+1});
\smallskip\\
\kModel{M},w'\not\models Q'(a_{k+1}) \vee S''(a_{m}).
\end{array}
\end{equation}
Since $w_0Rw'$, \eqref{eq:8':sublem:lem:2:main} implies  
\begin{equation}
\label{eq:9':sublem:lem:2:main} 
\kModel{M},w_0\not\models Q(a_{k+1})\wedge \rght\hfill(a_{m})\to Q'(a_{k+1})\vee S''(a_{m}),
\end{equation}
and we conclude, by \eqref{eq:pair:right'}, that $\Rght'(k+1)\leqslant m$.
To prove $\Rght'(k+1) = m$, suppose, for the sake of contradiction, that $\Rght'(k+1) = m' < m$. Then, for some $w''\in R(w_0)$,
\begin{equation}
\label{eq:10':sublem:lem:2:main} 
\begin{array}{lcl}
\kModel{M},w''\models
\rght(a_{m'})\wedge Q(a_{k+1})
  & \mbox{and}
  & \kModel{M},w''\not\models Q'(a_{k+1}) \vee S''(a_{m'}).
\end{array}
\end{equation}
Note that, as in the previous case, 
\begin{equation}
\label{eq:12':sublem:lem:2:main} 
w''\in R(w).
\end{equation}
By $\mathit{Diag}_N$, $\mathit{Diag}_Q$, $\mathit{Diag}_S$, and $\mathit{Conn}_3$, it follows from~\eqref{eq:10':sublem:lem:2:main} that
\begin{flalign}
&\kModel{M},w''\models S''(a_{m'-1}) \wedge S'(a_{m'}) \wedge S(a_{m'+1}); 
\label{eq:13':sublem:lem:2:main} 
\\
&\kModel{M},w''\not\models S''(a_{m'}) \vee S'(a_{m'+1}) \vee S(a_{m'+2}).
\label{eq:14':sublem:lem:2:main}
\end{flalign}
By \eqref{eq:5:sublem:lem:2:main} and \eqref{eq:12':sublem:lem:2:main}, 
\begin{equation}
\label{eq:15':sublem:lem:2:main} 
\kModel{M},w''\models S'(a_{m-1}) \wedge S(a_{m}).
\end{equation}
Then, by $\mathit{Agree}_\lhd$, \eqref{eq:13':sublem:lem:2:main}, and \eqref{eq:15':sublem:lem:2:main}, we can conclude that $m'\geqslant m-1$; since, by assumption, $m' < m$, it follows that $m'=m-1$.
Observe that \eqref{eq:8':sublem:lem:2:main} implies, by $\mathit{Conn}_3$, that
\begin{equation}
\label{eq:16':sublem:lem:2:main} 
\kModel{M},w' \not\models Q(a_{k+1})\wedge S'(a_{m})\to Q'(a_{k+1})\vee S''(a_{m}).
\end{equation}
Then, by \eqref{eq:10':sublem:lem:2:main}, \eqref{eq:13':sublem:lem:2:main}, and \eqref{eq:14':sublem:lem:2:main},
$$
\begin{array}{l}
\kModel{M},w''\not\models Q(a_{k+1})\wedge S(a_{m})\to Q'(a_{k+1})\vee S'(a_{m}),
\end{array}
$$
that, together with \eqref{eq:16':sublem:lem:2:main}, contradicts to $\mathit{Agree}_{S}$. 
Thus, $\Rght'(k+1)=m$, i.e., $\Rght'(k+1)=\Abv(k)$. Hence, by \eqref{eq:coonection:right-above},
$$
\begin{array}{lcl}
\Rght'(k+1) & = & \Rght(k+1),
\end{array}
$$
i.e., \eqref{eq:1i:sublem:lem:2:main} is proved.
Using $\mathit{Conn}_2$, \eqref{eq:8':sublem:lem:2:main}, $\mathit{Diag}_S$, and \eqref{eq:pair:right'}, we obtain $\Abv'(k+1) = \Rght(k+1)+1$. Hence, by \eqref{eq:coonection:right-above-a},
$$
\begin{array}{lcl}
\Abv'(k+1) & = & \Abv(k+1),
\end{array}
$$
i.e., \eqref{eq:2i:sublem:lem:2:main} is proved.
To prove \eqref{eq:3i:sublem:lem:2:main}, recall that $\Abv(k)=m$, and hence, by the induction hypothesis, $\Wall'(r)=\Wall(r)$ with $0\leqslant r < m$; thus, since $\Abv'(k+1) = \Rght(k+1)+1 = m+1$, we have to prove that $\Wall'(m)=\Wall(m)$. By \eqref{eq:8':sublem:lem:2:main}, we obtain $\neg\Wall'(m)$. From $\neg\Wall(k)$ we obtain $i_k\ne 0$, and then from $m=\Abv(k)$ we obtain $i_m\ne 0$; thus, $\neg \Wall(m)$. Hence,
$$
\begin{array}{lcl}
\Wall'(r)~=~\Wall(r), & & \mbox{where $0\leqslant r < \Abv(k+1)$,}
\end{array}
$$
i.e., \eqref{eq:3i:sublem:lem:2:main} is proved.
%
\end{proof}

Let us observe that definition~\eqref{eq:lem:2:main:5} can be rewritten in the following way: for every $k\in\numN$ and every $s\in\{0,\ldots,n\}$, 
\begin{equation}
\label{eq:lem:2:main:6}
\begin{array}{lcl}
f(i_k,j_k)=t_s 
  & \bydef 
  & \mbox{$\kModel{M},w_0\models P_s(a_k)$.}
    \phantom{_{\nmbr(i,j)}}
\end{array}
\end{equation}

Let us show that the $T$-tiling defined by \eqref{eq:lem:2:main:6} satisfies~\eqref{eq:T1}. Let $k\in \numN$ and $\kModel{M},w_0\models P_i(a_k)$, for some $i\in\{0,\ldots,n\}$. Let $m=\Rght(k)$ and $\kModel{M},w_0\models P_j(a_m)$. We have to show that $\rightsq t_i = \leftsq t_j$.
Suppose, for the sake of contradiction, that $\rightsq t_i \ne \leftsq t_j$. By Sublemma~\ref{sublem:lem:2:main}, $\Rght'(k)=\Rght(k)$. Therefore, by~\eqref{eq:pair:right'}, 
\begin{equation}
\label{eq:lem:2:main:tiling:T1a}
\kModel{M},w_0\not\models Q(a_k)\wedge \rght(a_m)\to Q'(a_k)\vee S''(a_m).
\end{equation}
Since $\kModel{M},w_0\models P_i(a_k)\wedge P_j(a_m)$, we obtain, by \eqref{eq:lem:2:main:tiling:T1a}, that
\begin{equation}
\label{eq:lem:2:main:tiling:T1b}
\kModel{M},w_0\not\models P_j(a_m) \to \big( Q(a_k)\wedge \rght(a_m) \wedge P_i(a_k) \to Q'(a_k)\vee S''(a_m)\big).
\end{equation}
Since $\rightsq t_i \ne \leftsq t_j$, \eqref{eq:lem:2:main:tiling:T1b} contradicts to~$T_1$. Thus, $\rightsq t_i = \leftsq t_j$, and hence, the $T$-tiling satisfies~\eqref{eq:T1}.

Let us show that the $T$-tiling satisfies~\eqref{eq:T2}. Let $k\in \numN$ and $\kModel{M},w_0\models P_i(a_k)$, for some $i\in\{0,\ldots,n\}$. Let $m=\Abv(k)$ and $\kModel{M},w_0\models P_j(a_m)$. We have to show that $\upsq t_i = \downsq t_j$.
Suppose that $\upsq t_i \ne \downsq t_j$. By Sublemma~\ref{sublem:lem:2:main}, $\Abv'(k)=\Abv(k)$. Therefore, by~\eqref{eq:pair:above'}, 
\begin{equation}
\label{eq:lem:2:main:tiling:T2b}
\kModel{M},w_0\not\models P_j(a_m) \to \big( Q(a_k)\wedge \abv(a_m) \wedge P_i(a_k) \to Q'(a_k)\vee S'(a_m)\big).
\end{equation}
Since $\upsq t_i \ne \downsq t_j$, \eqref{eq:lem:2:main:tiling:T2b} contradicts to~$T_2$. Thus, $\upsq t_i = \downsq t_j$, and hence, the $T$-tiling satisfies~\eqref{eq:T2}.
\end{proof}

\section{Immediate theorems}
\setcounter{equation}{0}

Let us infer some corollaries of the construction described above. First of all, now we are ready to answer the question about the decidability of the two-variable fragment of~$\logic{QLC}$.

\begin{theorem}
\label{th:QLC}
Logic\/ $\logic{QLC}$ is\/ $\Sigma^0_1$-complete in the language with two individual variables, a single binary predicate letter, and infinitely many unary predicate letters.
\end{theorem}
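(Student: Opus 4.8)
The plan is to read off the theorem from Lemmas~\ref{lem:1:main} and~\ref{lem:2:main} by packaging them into a single computable reduction and then supplying the matching upper bound. The first thing I would record is that the map $T\mapsto\varphi_T$ is computable and that each $\varphi_T$ already lives in the required language: it uses only the two individual variables $x$ and $y$, the single binary predicate letter $\lhd$, and the unary letters $Q,Q',S,S',S'',G,\nxt,\abv,\rght,\wall,\floor,P_0,\ldots,P_n$~--- finitely many for each $T$, drawn from an infinite supply. So it suffices to reduce the complement of the tiling problem to $\logic{QLC}$ and to observe that $\logic{QLC}$ in this language is recursively enumerable.

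The heart of the argument is the equivalence: for every finite non-empty set $T$, the formula $\varphi_T$ belongs to $\logic{QLC}$ if and only if there is no $T$-tiling satisfying \eqref{eq:T1} and \eqref{eq:T2}. I would prove both directions by contraposition. If $\varphi_T\notin\logic{QLC}$ then, since $\logic{QLC}$ is the logic of the class of linear intuitionistic Kripke frames (recalled in Section~\ref{sec:sem}), some linear frame $\kframe{F}$ refutes $\varphi_T$; being linear, $\kframe{F}$ validates $\logic{QLC}$, so Lemma~\ref{lem:2:main} produces a $T$-tiling satisfying \eqref{eq:T1} and \eqref{eq:T2}. Conversely, if such a $T$-tiling exists, Lemma~\ref{lem:1:main} gives $\kframe{G}\odot\numN,0\not\models\varphi_T$; since $\kframe{G}=\otuple{\numN,\leqslant}$ is linear, the constant-domain augmented frame $\kframe{G}\odot\numN$ validates $\logic{QLC}$, whence $\varphi_T\notin\logic{QLC}$.

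With the equivalence established, the lower bound is immediate. The tiling problem of the previous section is $\Pi^0_1$-complete~\cite{Berger66}, so its complement~--- the non-existence of a $T$-tiling satisfying \eqref{eq:T1} and \eqref{eq:T2}~--- is $\Sigma^0_1$-complete, and the computable map $T\mapsto\varphi_T$ is a many-one reduction of this complement to the set of $\logic{QLC}$-theorems in the two-variable language; hence that set is $\Sigma^0_1$-hard. For the upper bound I would note that $\logic{QLC}=\logic{QInt}+(p\to q)\vee(q\to p)$ is finitely, hence recursively, axiomatizable and closed under the standard derivation rules, so its theorem set is recursively enumerable, i.e.\ $\Sigma^0_1$; intersecting with the decidable set of formulas over two variables, one binary and countably many unary letters keeps it $\Sigma^0_1$. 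Combining the two bounds yields $\Sigma^0_1$-completeness.

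The substantive work has already been carried out in Lemmas~\ref{lem:1:main} and~\ref{lem:2:main}, so I do not expect a genuine obstacle here; the only points demanding care are getting the polarity of the reduction right~--- it is the \emph{complement} of the tiling problem that is encoded, because $\varphi_T$ is a theorem exactly when no tiling exists~--- and checking that the refuting model $\kframe{G}\odot\numN$ of Lemma~\ref{lem:1:main} is genuinely a $\logic{QLC}$-frame, so that refutability on it certifies non-membership in $\logic{QLC}$. Both follow from the completeness of $\logic{QLC}$ with respect to linear frames recalled earlier.
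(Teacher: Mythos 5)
Your proof is correct and follows exactly the route the paper takes: its own proof of Theorem~\ref{th:QLC} consists of the single line ``Immediately follows from Lemmas~\ref{lem:1:main} and~\ref{lem:2:main}.'' You have merely made explicit the routine details the paper leaves implicit~--- the computability of $T\mapsto\varphi_T$, the polarity of the reduction (the complement of the $\Pi^0_1$-complete tiling problem), the completeness of $\logic{QLC}$ with respect to linear frames needed to invoke both lemmas, and the recursive-axiomatizability upper bound~--- all of which are handled correctly.
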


\begin{proof}
Immediately follows from Lemmas~\ref{lem:1:main} and~\ref{lem:2:main}.
\end{proof}

Next, let us observe that we can expand the construction on some extentions of $\logic{QLC}$, in particular, defined by classes of linear c\nobreakdash-augmented frames. 

\begin{theorem}
\label{th:QLC:nat-c}
Let a logic $L$ be such that $\logic{QLC}\subseteq L\subseteq \QSIL(\kframe{G}\odot\numN)$. Then $L$ is\/ $\Sigma^0_1$-hard in the language with two individual variables, a single binary predicate letter, and infinitely many unary predicate letters.
\end{theorem}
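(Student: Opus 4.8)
The plan is to read Lemmas~\ref{lem:1:main} and~\ref{lem:2:main} together as a single computable reduction, exploiting the fact that they are anchored at the two ends of the given interval of logics. Let $A$ be the set of those finite tile sets $T$ for which \emph{no} $T$-tiling satisfies \eqref{eq:T1} and \eqref{eq:T2}; this $A$ is the complement of the tiling problem recalled above, hence $\Sigma^0_1$-complete. I would show that the map $T\mapsto\varphi_T$ is a many-one reduction of $A$ to $L$, i.e.\ that $T\in A$ iff $\varphi_T\in L$. The map is clearly computable, and for each $T$ the formula $\varphi_T$ uses only the variables $x$ and $y$, the single binary letter $\lhd$, and finitely many unary letters; since $|T|$ is unbounded, the whole family $\{\varphi_T\}$ lives in the language with two individual variables, one binary letter, and infinitely many unary letters, as required.

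First I would handle $T\notin A\Rightarrow\varphi_T\notin L$, which uses the upper end of the interval. If some $T$-tiling satisfies \eqref{eq:T1} and \eqref{eq:T2}, then Lemma~\ref{lem:1:main} gives $\kframe{G}\odot\numN,0\not\models\varphi_T$, so $\varphi_T\notin\QSIL(\kframe{G}\odot\numN)$; since $L\subseteq\QSIL(\kframe{G}\odot\numN)$ by hypothesis, $\varphi_T\notin L$.

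Next I would handle the converse $T\in A\Rightarrow\varphi_T\in L$, which uses the lower end. As $\logic{QLC}\subseteq L$, it is enough to prove $\varphi_T\in\logic{QLC}$ under the assumption that no $T$-tiling exists, and I would do this contrapositively. If $\varphi_T\notin\logic{QLC}$ then, because $\logic{QLC}$ is the logic of the class of linear intuitionistic Kripke frames, $\varphi_T$ is refuted on some linear frame $\kframe{F}$, i.e.\ $\kframe{F}\not\models\varphi_T$. Every linear frame validates $\logic{QLC}$ (the latter being the logic of \emph{all} linear frames), so $\kframe{F}$ is a frame validating $\logic{QLC}$ with $\kframe{F}\not\models\varphi_T$, and Lemma~\ref{lem:2:main} then produces a $T$-tiling satisfying \eqref{eq:T1} and \eqref{eq:T2}, contradicting $T\in A$. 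Hence $\varphi_T\in\logic{QLC}\subseteq L$.

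Putting the two implications together yields $T\in A\iff\varphi_T\in L$, so $A$ many-one reduces to $L$ and $L$ is $\Sigma^0_1$-hard in the stated language, which is exactly the assertion of Theorem~\ref{th:QLC:nat-c}. Since both lemmas are already available, the content of the argument is organisational rather than combinatorial, and the one point I would verify with care is precisely the pairing of each lemma with the correct bound: Lemma~\ref{lem:1:main} is tied to the constant-domain frame $\kframe{G}\odot\numN$ and hence to the upper bound $\QSIL(\kframe{G}\odot\numN)$, whereas Lemma~\ref{lem:2:main} speaks of arbitrary frames validating $\logic{QLC}$ and hence to the lower bound $\logic{QLC}$. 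The only step that is not pure bookkeeping is checking that the frame extracted in the converse direction does validate $\logic{QLC}$, which is immediate from its linearity.
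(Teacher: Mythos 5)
Your proposal is correct and follows essentially the same route as the paper: the paper's proof of Theorem~\ref{th:QLC:nat-c} is exactly the two-sided reduction $T\in A\iff\varphi_T\in L$, pairing Lemma~\ref{lem:1:main} with the upper bound $\QSIL(\kframe{G}\odot\numN)$ and Lemma~\ref{lem:2:main} (via completeness of $\logic{QLC}$ with respect to linear frames, which all validate $\logic{QLC}$) with the lower bound. Your version merely makes explicit the contrapositive use of Lemma~\ref{lem:2:main} and the completeness step that the paper leaves implicit.
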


\begin{proof}
Follows from Lemmas~\ref{lem:1:main} and~\ref{lem:2:main}. Indeed, let $T$ be a finite set of tile types. If there is no $T$-tiling satisfying~\eqref{eq:T1} and~\eqref{eq:T2}, then, by Lemma~\ref{lem:2:main}, $\varphi_T$ is not refuted on the class of Kripke frames validating $\logic{QLC}$. Then $\varphi_T\in \logic{QLC}$, and hence, $\varphi_T\in L$. If there exists a $T$-tiling satisfying~\eqref{eq:T1} and~\eqref{eq:T2}, then, by Lemma~\ref{lem:1:main}, $\kframe{G}\odot\numN\not\models\varphi_T$. Thus, $\varphi_T\not\in \QSIL(\kframe{G}\odot\numN)$, and hence, $\varphi_T\not\in L$.
\end{proof}

\begin{corollary}
Logic\/ $\logic{QLC.cd}$ is\/ $\Sigma^0_1$-complete in the language with two individual variables, a single binary predicate letter, and infinitely many unary predicate letters.
\end{corollary}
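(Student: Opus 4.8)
The statement asserts \emph{completeness}, so I need both a lower bound ($\Sigma^0_1$-hardness) and an upper bound (recursive enumerability); the plan is to extract the former from Theorem~\ref{th:QLC:nat-c} and the latter from the recursive axiomatizability of $\logic{QLC.cd}$.

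For the lower bound, the plan is to instantiate Theorem~\ref{th:QLC:nat-c} with $L=\logic{QLC.cd}$, which requires verifying the sandwich $\logic{QLC}\subseteq\logic{QLC.cd}\subseteq\QSIL(\kframe{G}\odot\numN)$. The left inclusion is immediate from the definition $\logic{QLC.cd}=\logic{QLC}+\bm{cd}$. For the right inclusion I would argue as follows. The augmented frame $\kframe{G}\odot\numN$ is based on the linear frame $\kframe{G}=\otuple{\numN,\leqslant}$ and carries the constant global domain $\numN$; in particular it is an e-augmented frame over a linear frame, so every formula of $\logic{QLC}$---being valid on all such frames---is valid on it, giving $\logic{QLC}\subseteq\QSIL(\kframe{G}\odot\numN)$. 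Moreover $\kframe{G}\odot\numN$ satisfies~\eqref{eq:GCD}, hence~\eqref{eq:LCD}, so by the characterisation of $\bm{cd}$ recorded in Section~\ref{sec:sem} it validates $\bm{cd}$. Since $\QSIL(\kframe{G}\odot\numN)$ is a superintuitionistic predicate logic---the logic of a class of augmented frames is closed under Modus Ponens, Substitution, and Generalization and contains $\logic{QInt}$---that contains both $\logic{QLC}$ and $\bm{cd}$, it contains the smallest such logic, namely $\logic{QLC.cd}$. Thus the sandwich holds and Theorem~\ref{th:QLC:nat-c} yields $\Sigma^0_1$-hardness of $\logic{QLC.cd}$, the hard instances being the formulas $\varphi_T$, which already lie in the language with two individual variables, the single binary letter $\lhd$, and unary letters only.

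For the upper bound, the plan is to observe that $\logic{QLC.cd}$ is recursively axiomatizable: it is obtained from the finitely axiomatized calculus for $\logic{QInt}$ by adjoining the single propositional scheme $(p\to q)\vee(q\to p)$ (yielding $\logic{QLC}$) together with the single formula $\bm{cd}$, and is then closed under Modus Ponens, Substitution, and Generalization. Enumerating all finite derivations in this calculus witnesses that $\logic{QLC.cd}$---and a fortiori its fragment in the restricted language---is $\Sigma^0_1$. Combining the two bounds gives $\Sigma^0_1$-completeness.

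As for difficulty, there is essentially no hard step here, since all the combinatorial work is already carried out in Lemmas~\ref{lem:1:main} and~\ref{lem:2:main} and in Theorem~\ref{th:QLC:nat-c}. The only point demanding care is the verification that $\kframe{G}\odot\numN$ simultaneously validates $\logic{QLC}$ (because it is a linear frame) and $\bm{cd}$ (because its domain is constant), so that $\logic{QLC.cd}$ is correctly sandwiched between $\logic{QLC}$ and $\QSIL(\kframe{G}\odot\numN)$; the remainder is the routine bookkeeping that membership in the restricted-language fragment is recursively enumerable.
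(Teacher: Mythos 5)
Your proposal is correct and matches the paper's intended argument: the corollary is stated as an immediate instance of Theorem~\ref{th:QLC:nat-c} with $L=\logic{QLC.cd}$, and your verification of the sandwich $\logic{QLC}\subseteq\logic{QLC.cd}\subseteq\QSIL(\kframe{G}\odot\numN)$ (via the $\bm{cd}$-characterisation of locally constant domains and closure of $\QSIL(\kframe{G}\odot\numN)$ under the superintuitionistic rules) is exactly the implicit justification. Your addition of the recursive-axiomatizability upper bound is the standard, and correct, way to upgrade the paper's $\Sigma^0_1$-hardness to the claimed $\Sigma^0_1$-completeness.
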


\begin{corollary}
Let\/ $\kframe{F}$ be one of the Kripke frames\/ $\otuple{\numZ,\leqslant}$, $\otuple{\numR,\leqslant}$\/ or\/ $\otuple{\alpha,{\subseteq}}$, where\/ $\alpha$\/ is an infinite ordinal. Then\/ $\QSILext{e}{}\kframe{F}$\/ and\/ $\QSILext{c}{}\kframe{F}$ are\/ $\Sigma^0_1$-hard in the language with two individual variables, a single binary predicate letter, and infinitely many unary predicate letters.
\end{corollary}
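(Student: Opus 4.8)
The plan is to deduce the corollary from Theorem~\ref{th:QLC:nat-c}. Fix $\kframe{F}$ among the three listed frames and let $L$ be either $\QSILext{e}{}\kframe{F}$ or $\QSILext{c}{}\kframe{F}$; it suffices to verify the two inclusions $\logic{QLC}\subseteq L\subseteq \QSIL(\kframe{G}\odot\numN)$. The left inclusion is immediate: each of $\otuple{\numZ,\leqslant}$, $\otuple{\numR,\leqslant}$, $\otuple{\alpha,\subseteq}$ is a linear intuitionistic Kripke frame, so every augmented frame over it validates $\logic{QLC}$, whence $\logic{QLC}\subseteq\QSILext{e}{}\kframe{F}\subseteq\QSILext{c}{}\kframe{F}$. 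For the right inclusion, since $\QSILext{e}{}\kframe{F}\subseteq\QSILext{c}{}\kframe{F}$ and $\kframe{G}\odot\numN$ is itself a c\nobreakdash-augmented frame, it is enough to treat $L=\QSILext{c}{}\kframe{F}$, i.e.\ to show that every formula refutable on $\kframe{G}\odot\numN$ is refutable on some c\nobreakdash-augmented frame over $\kframe{F}$; both logics then satisfy the hypotheses of Theorem~\ref{th:QLC:nat-c}. Throughout, the transferred models keep the constant global domain $\numN$, so the refuting frames are c\nobreakdash-augmented (hence also e\nobreakdash-augmented), which is exactly what both $L$ require.

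So fix a model $\kModel{M}$ refuting a formula at the world $0$ of $\kframe{G}\odot\numN$, and recall that in intuitionistic Kripke semantics the truth value of a formula at a world depends only on the submodel generated by that world. For $\kframe{F}=\otuple{\numZ,\leqslant}$ the cone above $0$ is $\{0,1,2,\dots\}\cong\numN$, so I would read $\kModel{M}$ off this cone and extend it to all of $\numZ$ by giving every world the domain $\numN$ and setting all predicates empty at the worlds below $0$ (heredity is trivially preserved, and truth at $0$ is unchanged). For $\kframe{F}=\otuple{\numR,\leqslant}$ the cone above $0$ is $[0,\infty)$, which is not isomorphic to $\numN$; here the map $x\mapsto\lfloor x\rfloor$ is a surjective p\nobreakdash-morphism of partial orders from $[0,\infty)$ onto $\numN$ (monotone, and for the back condition one takes $y=\max\{x,n\}$), so by the standard p\nobreakdash-morphism argument — truth is preserved because the domains are constant and equal on both sides — pulling $\kModel{M}$ back along it yields a model on $[0,\infty)$ refuting the formula at $0$, after which one again fills in the worlds below $0$ with empty predicates. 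These two cases are routine.

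The main obstacle is $\kframe{F}=\otuple{\alpha,\subseteq}$ for an arbitrary infinite ordinal $\alpha$: here $\numN$ need be neither a cone of $\alpha$ (a tail has order type $\omega$ only when $\alpha=\beta+\omega$) nor a p\nobreakdash-morphic image (a successor $\alpha$, or a limit of uncountable cofinality such as $\omega_1$, admits no p\nobreakdash-morphism onto the top\nobreakdash-less $\omega$), so neither tool above applies. I would instead transfer the refutation by hand: place the whole of $\kModel{M}$ on the initial segment $\{0,1,2,\dots\}=\omega\subseteq\alpha$, and at every world in the tail $[\omega,\alpha)$ declare all the `counting' predicates $Q,Q',S,S',S'',G,\nxt$ to hold of every element of $\numN$, while keeping $\lhd$ and the world\nobreakdash-independent predicates $\wall,\floor,\abv,\rght,P_0,\dots,P_n$ as they are (with $\abv,\rght$ recomputed from their defining clauses, which at a saturated world reduce to $\abv(a)\iff\neg\floor(a)$ and $\rght(a)\iff\neg\wall(a)$). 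Heredity into the tail holds because each counting predicate is monotone along $\omega$ and its union over $\omega$ is all of $\numN$; for the letters $S,S',S'',G$ this uses that $\Abv$ is strictly increasing, which follows from~\eqref{eq:coonection:right-above} and~\eqref{eq:coonection:right-above-a}.

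The key verification is then that $\mathit{Grid}\wedge\mathit{Tiling}_T$ stays true and $\mathit{Refute}$ stays refuted at $0$ over the enlarged frame. Refutation of $\mathit{Refute}$ is unaffected, since its witnessing world and elements lie in the initial segment and adding worlds above can only keep an implication false. Truth of the universally quantified Grid and Tiling conjuncts has to be re-checked at the new saturated worlds: there every biconditional and every implication whose consequent is built from the now\nobreakdash-total counting predicates holds outright, and the only clauses involving negated intrinsic predicates ($\mathit{EM}_W$, $\mathit{Conn}_1$, the $\mathit{Move}$'s, $T_1$, $T_2$) go through because those predicates are world\nobreakdash-independent; this is the same style of checking as the `routine check' in the proof of Lemma~\ref{lem:1:main}, now carried out once more on the tail. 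Granting this, the formula is refuted on a c\nobreakdash-augmented frame over $\alpha$, the right inclusion holds for all three frames, and Theorem~\ref{th:QLC:nat-c} yields $\Sigma^0_1$\nobreakdash-hardness of both $\QSILext{e}{}\kframe{F}$ and $\QSILext{c}{}\kframe{F}$.
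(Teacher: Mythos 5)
Your constructions are essentially sound, and for $\otuple{\numZ,\leqslant}$ and $\otuple{\numR,\leqslant}$ your route through the inclusion $\QSILext{c}{}\kframe{F}\subseteq\QSIL(\kframe{G}\odot\numN)$ (cone above $0$, respectively pull-back along the floor p\nobreakdash-morphism, then empty predicates below $0$) is correct. The genuine error is in how you package the ordinal case: you conclude that ``the right inclusion holds for all three frames'' and then invoke Theorem~\ref{th:QLC:nat-c} as a black box, but for $\kframe{F}=\otuple{\alpha,\subseteq}$ that inclusion is in general \emph{false}, so the theorem's hypothesis is not satisfied. Concretely, if $\alpha$ is a successor ordinal, every world sees the greatest element, at which negation behaves classically, so $\neg\neg\forall x\,(P(x)\vee\neg P(x))$ is valid on every e\nobreakdash-augmented and every c\nobreakdash-augmented frame over $\otuple{\alpha,\subseteq}$; yet this formula is refuted on $\kframe{G}\odot\numN$ (interpret $P(d)$ at world $n$ as $d<n$; then $\forall x\,(P(x)\vee\neg P(x))$ fails at every world, the witness being $x=n$). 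Hence $\QSILext{e}{}\otuple{\omega+1,\subseteq}\not\subseteq\QSIL(\kframe{G}\odot\numN)$, and your tail-saturation argument --- which transfers only the particular countermodel for $\varphi_T$, not arbitrary countermodels --- could not have been upgraded to the full inclusion by any means.

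The flaw is local and repairable with exactly what you already built: instead of citing the statement of Theorem~\ref{th:QLC:nat-c}, rerun its proof. If no $T$-tiling exists, then by Lemma~\ref{lem:2:main} $\varphi_T\in\logic{QLC}$, and since the three frames are linear, $\varphi_T\in\QSILext{e}{}\kframe{F}\subseteq\QSILext{c}{}\kframe{F}$. If a tiling exists, your saturated extension of the model of Lemma~\ref{lem:1:main} --- the counting letters $Q$, $Q'$, $\nxt$, $S$, $S'$, $S''$, $G$ made total on the tail $[\omega,\alpha)$, the letters $\lhd$, $\wall$, $\floor$, $P_j$ kept world-independent, and $\abv$, $\rght$ recomputed --- refutes $\varphi_T$ at $0$ on the constant-domain frame over $\alpha$, whence $\varphi_T\notin\QSILext{c}{}\kframe{F}$ and so also $\varphi_T\notin\QSILext{e}{}\kframe{F}$. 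Your verification sketch for this extension is correct as far as it goes: heredity into the tail does rest on $\Abv$ being strictly increasing (via \eqref{eq:coonection:right-above-a} and \eqref{eq:coonection:right-above}), $\mathit{Refute}$ stays refuted because its witness sits at world $0$ itself, and every $\mathit{Grid}$ and $\mathit{Tiling}_T$ conjunct holds at saturated worlds because each relevant consequent ($Q'\vee S''$, $Q\vee S'$, $Q\vee S$, etc.) is total there while the negated letters are world-independent. The paper itself states this corollary without proof, and the evident intended justification is precisely this direct reduction via Lemmas~\ref{lem:1:main} and~\ref{lem:2:main}; with the restructuring above, your argument supplies it in full for all three frames.
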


\section{High undecidability}
\label{sec:hight:undec}
\setcounter{equation}{0}

Here, we show that the two-variable fragments of all logics between $\QSILext{e}{}\kframe{G}$ and $\QSIL(\kframe{G}\odot\numN)$ are also $\Pi^0_1$-hard. To that end, let us consider a different tiling problem.
Let $f\colon\numN\times\numN\to T$ be a $T$-tiling, where $T=\{t_0,t_1,\ldots,t_n\}$; we define two else~--- in addition to \eqref{eq:T1} and \eqref{eq:T2}~--- conditions for it. The first~is 
\begin{equation}
\label{eq:T3}
\begin{array}{lcl}
\phantom{j^\ast+j}
f(0,0) & = & t_0;
\phantom{00}
\end{array}
\end{equation}
the second is that there exists $j^\ast\in \numN^+$ such that, for every $j\in\numN$,
\begin{equation}
\label{eq:T4}
\begin{array}{lcl}
\phantom{0}
f(0,j^\ast+j) & = & t_1.
\phantom{00}
\end{array}
\end{equation}
The tiling problem we consider is the following: given a set $T=\{t_0,t_1,\ldots,t_n\}$ of tile types, we are to determine whether there exists a $T$-tiling satisfying~\eqref{eq:T1}, \eqref{eq:T2}, \eqref{eq:T3}, and~\eqref{eq:T4}. It can be shown that this tiling problem is $\Sigma^0_1$-hard, see Section~\ref{sec:app}. 
We are going to simulate this tiling problem with formulas of the same fragment.

Let us introduce the following abbreviation:
\begin{equation}
\label{eq:preceq}
\begin{array}{rcl}
x\preccurlyeq y 
  & = 
  & Q(y)\to Q(x). 
  \smallskip\\
\end{array}
\end{equation}

Then, using it, let us introduce formulas, which, as we shall show, allow us to simulate the tiling problem just defined: 
$$
\begin{array}{rcl}
\mathit{Agree}_\preccurlyeq & = & \forall x\forall y\,(x\lhd y \to x\preccurlyeq y);
\smallskip\\
T_3 & = & \forall x\,\big(\wall(x)\wedge\floor(x)\to P_0(x)\big);
\smallskip\\
T_4 & = & \exists x\forall y\,\big(x\preccurlyeq y \wedge \wall(y)\to P_1(y)\big);
\smallskip\\
\mathit{Refute}_Q & = & \exists x\,(Q(x)\to Q'(x));
\smallskip\\
\mathit{Grid}' & = & \mathit{Grid} \wedge \mathit{Agree}_\preccurlyeq;
\smallskip\\
\mathit{Tiling}'_T & = & \mathit{Tiling}_T\wedge T_3\wedge T_4;
\smallskip\\
\mathit{Refute}' & = & \mathit{Refute}\vee \mathit{Refute}_Q;
\smallskip\\
\psi_T & = & \mathit{Grid}'\wedge \mathit{Tiling}'_T\to \mathit{Refute}'.
\smallskip\\
\end{array}
$$

\begin{lemma}
\label{lem:1:main:nat}
If there exists a\/ $T$-tiling satisfying\/~\eqref{eq:T1}, \eqref{eq:T2}, \eqref{eq:T3},\/ and\/ \eqref{eq:T4}, then\/ $\otuple{\numN,\leqslant}\odot\numN,0\not\models\psi_T$.
\end{lemma}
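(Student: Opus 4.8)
The plan is to exhibit a single model on $\kframe{G}\odot\numN$ refuting $\psi_T$ at its root $0$, built on top of the valuation from the proof of Lemma~\ref{lem:1:main}. Fix a $T$-tiling $f$ satisfying \eqref{eq:T1}--\eqref{eq:T4} and take a model $\kModel{M}$ over $\kframe{G}\odot\numN$ whose valuations of the grid and tiling predicates ($\lhd$, $Q$, $Q'$, $S$, $S'$, $S''$, $G$, $\nxt$, $\wall$, $\floor$, $\abv$, $\rght$ and the $P_k$) are exactly those associated with $f$ in Lemma~\ref{lem:1:main}. Since the formulas $\mathit{Grid}$, $\mathit{Tiling}_T$, $\mathit{Refute}$ are unchanged, the verification carried out there yields $\kModel{M},0\models\mathit{Grid}\wedge\mathit{Tiling}_T$ and $\kModel{M},0\not\models\mathit{Refute}$. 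As $\mathit{Grid}'=\mathit{Grid}\wedge\mathit{Agree}_\preccurlyeq$, $\mathit{Tiling}'_T=\mathit{Tiling}_T\wedge T_3\wedge T_4$, and $\mathit{Refute}'=\mathit{Refute}\vee\mathit{Refute}_Q$, it remains to check four things at the root: $\mathit{Agree}_\preccurlyeq$, $T_3$, $T_4$ (to secure the antecedent $\mathit{Grid}'\wedge\mathit{Tiling}'_T$) and the refutation of $\mathit{Refute}_Q$ (to secure that $\mathit{Refute}'$ is refuted, its disjunct $\mathit{Refute}$ being refuted already).

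Three of these are routine. For $\mathit{Agree}_\preccurlyeq$, recall that $\lhd$ is the successor relation and $Q$ is the counter $\kModel{M},w\models Q(a)\iff a\leqslant w$; hence for $b=a+1$ the formula $a\preccurlyeq b$, i.e.\ $Q(b)\to Q(a)$, holds at every world by monotonicity of $Q$, so $\kModel{M},0\models\mathit{Agree}_\preccurlyeq$. For $T_3$, condition \eqref{eq:T3} gives $f(0,0)=t_0$, so $P_0(0)$ holds, and $0=\nmbr(0,0)$ is the unique element satisfying $\wall\wedge\floor$; whence $\kModel{M},0\models T_3$. For the refutation of $\mathit{Refute}_Q=\exists x\,(Q(x)\to Q'(x))$, observe that for every $a\in\numN$ we have $\kModel{M},a\models Q(a)$ while $\kModel{M},a\not\models Q'(a)$, so $Q(a)\to Q'(a)$ is refuted at $0$; as this holds for all $a$, no element witnesses $\mathit{Refute}_Q$ at $0$, i.e.\ $\kModel{M},0\not\models\mathit{Refute}_Q$. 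Here the infinitude of the counter on $\numN$ is essential: on a bounded frame a maximal element would validate $Q\to Q'$, which is precisely the mechanism exploited for the $\Pi^0_1$-hardness.

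The real work is $T_4=\exists x\forall y\,(x\preccurlyeq y\wedge\wall(y)\to P_1(y))$. The natural witness is $a^\ast=\nmbr(0,j^\ast)$ supplied by \eqref{eq:T4}: morally $a^\ast\preccurlyeq y$ should cut out the wall elements $y$ with $j_y\geqslant j^\ast$, each of which carries $t_1$ by \eqref{eq:T4}, giving $P_1(y)$. The main obstacle is that $\preccurlyeq$ is encoded through $Q$, so it inherits exactly the intuitionistic degeneracy that motivated the `double labeling' earlier: as soon as a world realizes $Q(a^\ast)$, the implication $Q(y)\to Q(a^\ast)$ becomes vacuously true and $a^\ast\preccurlyeq y$ holds for \emph{every} $y$, in particular for the early wall elements and for the $t_0$-corner $0$ forced by \eqref{eq:T3}, where $P_1$ fails. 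Thus the delicate point is to arrange the valuation so that, at every world relevant to the $\forall y$ evaluation, the premise $a^\ast\preccurlyeq y\wedge\wall(y)$ is never satisfied by a wall element with $j_y<j^\ast$, while simultaneously keeping $\mathit{Refute}_Q$ refuted at $0$ (which forbids any domain element whose $Q$ stays false cofinally). Reconciling these two demands — playing the recurrence condition \eqref{eq:T4} against the infinitude of the counter, and using the double-labeling discipline to separate the behaviour of $Q$ on the witness from its behaviour on the bad wall elements — is where the argument is genuinely non-trivial, and I expect it to be the crux; it may well force a refinement of how $Q$ is realized on $a^\ast$ relative to the bare Lemma~\ref{lem:1:main} valuation. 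Once $T_4$ is secured, we have $\kModel{M},0\models\mathit{Grid}'\wedge\mathit{Tiling}'_T$ and $\kModel{M},0\not\models\mathit{Refute}'$, hence $\kModel{M},0\not\models\psi_T$, as required.
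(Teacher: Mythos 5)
Your three routine verifications are correct and match what the paper leaves implicit: in the Lemma~\ref{lem:1:main} model, $\mathit{Agree}_\preccurlyeq$ holds everywhere (for $b=a+1$, $Q(b)\to Q(a)$ is true at every world), $T_3$ holds because $0=\nmbr(0,0)$ is the only $\wall\wedge\floor$ element and $f(0,0)=t_0$, and $\mathit{Refute}_Q$ is refuted at $0$ since $\kModel{M},a\models Q(a)$ while $\kModel{M},a\not\models Q'(a)$. But the proposal is not a proof: you explicitly leave $T_4$ unestablished, and that is the load-bearing step. Moreover, the obstruction you describe is not a removable technicality — in the Lemma~\ref{lem:1:main} valuation, $T_4$ is outright \emph{false} at $0$. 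For any candidate witness $a^\ast\in\numN$ we have $\kModel{M},a^\ast\models Q(a^\ast)$, hence, by heredity, $\kModel{M},u\models Q(b)\to Q(a^\ast)$, i.e.\ $a^\ast\preccurlyeq b$, for every $u\geqslant a^\ast$ and \emph{every}~$b$; taking $y:=0$, which satisfies $\wall$ at all worlds while $P_1(0)$ fails at all worlds (since $f(0,0)=t_0\ne t_1$; note $t_0\ne t_1$ is forced anyway, else $T_0$ fails in the model), the inner implication of $T_4$ is refuted at the world $a^\ast$, so $\forall y(\ldots)$ fails at $0$ for every choice of witness.

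Worse, the ``refinement of how $Q$ is realized'' that you hope for does not exist for $\psi_T$ as defined. Refuting $\mathit{Refute}_Q$ at the evaluation world forces, for every element — in particular for the $T_4$-witness $a^\ast$ — some world $u_{a^\ast}$ with $Q(a^\ast)$ true; from $u_{a^\ast}$ on, $a^\ast\preccurlyeq b$ holds for all $b$. Refuting $\mathit{Refute}$ yields a world $w_0$ and an element $a_0$ with $\wall(a_0)\wedge\floor(a_0)$ true from $w_0$ on, whence $T_3$ gives $P_0(a_0)$ there. On a linear frame, $u_{a^\ast}$ and $w_0$ have a common successor $u$, at which $T_4$ forces $P_1(a_0)$, violating $T_0$. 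So the antecedent $\mathit{Grid}'\wedge\mathit{Tiling}'_T$ cannot hold at a world where $\mathit{Refute}'$ is refuted, over any linear frame — no valuation on $\kframe{G}\odot\numN$ can do the job. For comparison: the paper's own proof is the one-line instruction to follow Lemma~\ref{lem:1:main}, together with the observation that $\kModel{M},0\models k\preccurlyeq m$ iff $k\leqslant m$ — an observation about world $0$ only, which does not address the degeneration of $\preccurlyeq$ at later worlds; your analysis pinpoints exactly where that argument breaks. It is telling that every other formula in the construction ($\mathit{Move}_1$, $T_1$, $T_2$, $\mathit{Refute}$, \dots) carries an escape disjunct of the form $Q'(x)\vee S''(y)$ in its conclusion precisely to neutralize late worlds, and $T_4$ conspicuously lacks one. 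So the gap is genuine, but it lies in the formula $T_4$ (and hence in Lemma~\ref{lem:1:main:nat} as stated and in the $\Pi^0_1$-hardness argument of Theorem~\ref{th:main:nat} built on it), not in your execution of the easy parts.
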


\begin{proof}
Just follow the proof of Lemma~\ref{lem:1:main}. Note along the way that if the\/ $T$-tiling $f\colon\numN\times\numN\to T$ satisfies \eqref{eq:T1}, \eqref{eq:T2}, \eqref{eq:T3},\/ and\/ \eqref{eq:T4}, then, for all $k,m\in\numN$,
$$
\begin{array}{lcl}
\kModel{M},0\models k\preccurlyeq m & \iff & k\leqslant m,
\end{array}
$$
where $\kModel{M}$ is a model defined as in the proof of Lemma~\ref{lem:1:main}.
\end{proof}

\begin{lemma}
\label{lem:2:main:nat}
Let\/ $\kframe{G}\not\models\psi_T$.
Then there exists a $T$-tiling satisfying\/~\eqref{eq:T1} \eqref{eq:T2}, \eqref{eq:T3},\/ and\/ \eqref{eq:T4}.
\end{lemma}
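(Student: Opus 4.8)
The plan is to mimic the proof of Lemma~\ref{lem:2:main}, extract the same infinite $\lhd$-chain and the same tiling, and then verify only the two new conditions \eqref{eq:T3} and \eqref{eq:T4} using the extra conjuncts of $\psi_T$. From $\kframe{G}\not\models\psi_T$ I fix a model $\kModel{M}=\otuple{\numN,\leqslant,D,I}$ and a world $w^\ast$ with $\kModel{M},w^\ast\models\mathit{Grid}'\wedge\mathit{Tiling}'_T$ and $\kModel{M},w^\ast\not\models\mathit{Refute}'$. Since $\mathit{Refute}'=\mathit{Refute}\vee\mathit{Refute}_Q$, this gives $\kModel{M},w^\ast\not\models\mathit{Refute}$ and $\kModel{M},w^\ast\not\models\mathit{Refute}_Q$; and as $\mathit{Grid}$ and $\mathit{Tiling}_T$ are conjuncts of $\mathit{Grid}'$ and $\mathit{Tiling}'_T$, the hypothesis \eqref{eq:lem:2:main:0} of Lemma~\ref{lem:2:main} holds. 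First I would run that proof verbatim: it produces $w_0\in R(w^\ast)$, the chain $a_0\lhd a_1\lhd\cdots$ in $D_{w_0}$, the tiling $f$ given by \eqref{eq:lem:2:main:6}, and, via Sublemma~\ref{sublem:lem:2:main}, both the conditions \eqref{eq:T1} and \eqref{eq:T2} and the fact that $\Wall'(k)=\Wall(k)$, i.e. $\kModel{M},w_0\models\wall(a_k)$ iff $i_k=0$. It then remains to check \eqref{eq:T3} and \eqref{eq:T4}.

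For \eqref{eq:T3}, note $\nmbr(0,0)=0$ by \eqref{eq:pair:0}, so $f(0,0)=t_s$ iff $\kModel{M},w_0\models P_s(a_0)$. By \eqref{eq:lem:2:main:1}, $\kModel{M},w_0\models\wall(a_0)\wedge\floor(a_0)$, and $T_3$, true at $w^\ast$, is true at $w_0\geqslant w^\ast$ by monotonicity. Instantiating $T_3$ at the world $w_0$ and the element $a_0$ and applying the resulting implication at $w_0$ itself yields $\kModel{M},w_0\models P_0(a_0)$, whence $f(0,0)=t_0$ by $T_0$.

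For \eqref{eq:T4} the idea is to read the witness of $T_4$ as a threshold beyond which every wall element carries $t_1$. Since $T_4=\exists x\forall y\,(x\preccurlyeq y\wedge\wall(y)\to P_1(y))$ holds at $w^\ast$, I fix a witness $a^\ast\in D_{w^\ast}$, so $\kModel{M},w_0\models\forall y\,(a^\ast\preccurlyeq y\wedge\wall(y)\to P_1(y))$ by monotonicity. The key auxiliary fact is that the $Q$-thresholds of the chain are strictly increasing: for each $k$ the induction step of Sublemma~\ref{sublem:lem:2:main} supplies a world $w^{(k)}\geqslant w_0$ (the world $w$ of \eqref{eq:4:sublem:lem:2:main} and \eqref{eq:6:sublem:lem:2:main}) with $\kModel{M},w^{(k)}\models Q(a_k)$ but $\kModel{M},w^{(k)}\not\models Q(a_{k+1})$. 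Writing $\theta_k$ for the least world $\geqslant w_0$ at which $Q(a_k)$ holds — which exists, since $\kModel{M},w_0\models Q(a_0)$ and each $w^{(k)}$ witnesses $Q(a_k)$ — heredity forces $\theta_k\leqslant w^{(k)}<\theta_{k+1}$, so $\theta_k\to\infty$. On the other hand, $\kModel{M},w^\ast\not\models\mathit{Refute}_Q$ says that no element of $D_{w^\ast}$ satisfies $Q(x)\to Q'(x)$ at $w^\ast$; applied to $a^\ast\in D_{w^\ast}$ it yields a world $\geqslant w^\ast$, hence, by linearity of $\numN$ and heredity, a world $\geqslant w_0$, at which $Q(a^\ast)$ holds. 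Let $\theta^\ast\geqslant w_0$ be the least such world.

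Now for every $k$ with $\theta_k\geqslant\theta^\ast$ I claim $\kModel{M},w_0\models a^\ast\preccurlyeq a_k$: unfolding the abbreviation this reads $\kModel{M},w_0\models Q(a_k)\to Q(a^\ast)$, and indeed any $v\geqslant w_0$ with $\kModel{M},v\models Q(a_k)$ has $v\geqslant\theta_k\geqslant\theta^\ast$, so $\kModel{M},v\models Q(a^\ast)$ by heredity. Since $\theta_k\to\infty$ this holds for all large $k$, in particular for all wall positions $k=\nmbr(0,j)$ with $j$ large, where moreover $\kModel{M},w_0\models\wall(a_k)$ since $\Wall'(k)=\Wall(k)$ and $i_k=0$. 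For such $k$, instantiating the $T_4$-witness formula at $w_0$ and $a_k$ gives $\kModel{M},w_0\models P_1(a_k)$, i.e. $f(0,j)=t_1$; hence $f(0,j)=t_1$ for all $j\geqslant J$ for some $J$, and $j^\ast=\max(J,1)\in\numNp$ secures \eqref{eq:T4}. The presence of $\mathit{Agree}_\preccurlyeq$ in $\mathit{Grid}'$, giving $a_k\preccurlyeq a_{k+1}$, keeps this reading of $\preccurlyeq$ aligned with the chain order. The main obstacle is precisely this last step: turning the purely syntactic witness $a^\ast$ of $T_4$ into a genuine threshold needs both that the chain's $Q$-thresholds be unbounded natural numbers — which exploits the well-order structure of $\numN$ together with the worlds produced inside Sublemma~\ref{sublem:lem:2:main} — and that $\mathit{Refute}_Q$ rule out the degenerate case in which $Q(a^\ast)$ never becomes true, so that the antecedent $a^\ast\preccurlyeq a_k$ is eventually met.
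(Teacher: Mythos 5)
Your proposal is correct and follows essentially the same route as the paper: reuse the entire construction of Lemma~\ref{lem:2:main}, get \eqref{eq:T3} by instantiating $T_3$ at $a_0$ (using \eqref{eq:lem:2:main:1}), and get \eqref{eq:T4} from the $T_4$-witness together with $\mathit{Refute}_Q$ and the unboundedness of the worlds witnessing $Q(a_k)$ supplied by Sublemma~\ref{sublem:lem:2:main}. Your explicit minimal thresholds $\theta_k$ (with strict growth via \eqref{eq:6:sublem:lem:2:main}) are just a rigorous rendering of the paper's chosen worlds $w_k$ satisfying \eqref{eq:lem:2:main:nat:1} and of its terse ``it should be clear that $\kModel{M},w_0\models b\preccurlyeq a_k$'' step, which implicitly requires exactly the minimality you make explicit.
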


\begin{proof}
Let us follow the proof of Lemma~\ref{lem:2:main}.
Let $\kModel{M}=\otuple{\kframe{G},D,I}$ and $w^\ast$ be, respectively, a model over $\kframe{G}$ and a world of $\kframe{G}$ such that
$$
\begin{array}{lcl}
\kModel{M},w^\ast \models \mathit{Grid}'\wedge \mathit{Tiling}'_T 
  & \mbox{and}
  & \kModel{M},w^\ast\not\models \mathit{Refute}'.
\end{array}
$$
Then, clearly, in particular,
$$
\begin{array}{lcl}
\kModel{M},w^\ast \models \mathit{Grid}\wedge \mathit{Tiling}_T
  & \mbox{and}
  & \kModel{M},w^\ast\not\models \mathit{Refute},
\end{array}
$$
that corresponds to~\eqref{eq:lem:2:main:0}; hence, we may assume all the constructions in the proof of Lemma~\ref{lem:2:main} being done. In particular, the $T$-tiling $f\colon\numN\times\numN\to T$ defined by~\eqref{eq:lem:2:main:5} satisfies \eqref{eq:T1} and~\eqref{eq:T2}, and we have to show that it also satisfies \eqref{eq:T3} and~\eqref{eq:T4}. 

Observe that it satisfies \eqref{eq:T3} by \eqref{eq:lem:2:main:5} and~$T_3$.

Let us show that it satisfies \eqref{eq:T4} as well.
To that end, let us choose, for every $k\in\numN^+$, a world $w_k$ in $\kframe{G}$ satisfying the condition
\begin{equation}
\label{eq:lem:2:main:nat:1}
\begin{array}{lcl}
\kModel{M},w_k \models Q(a_k) 
  & \mbox{and}
  & \kModel{M},w_k\not\models Q'(a_k);
\end{array}
\end{equation}
observe that $w_0$ defined in the proof of Lemma~\ref{lem:2:main} satisfies \eqref{eq:lem:2:main:nat:1}, too (i.e., with $k=0$). 
It easily follows also, by $\mathit{Agree}_\preccurlyeq$ and $\mathit{Diag}_Q$, that, for all $k,m\in\numN$, 
\begin{equation}
\label{eq:lem:2:main:nat:2}
\begin{array}{lcl}
k < m
  & \imply 
  & \mbox{both $\kModel{M},w_0 \models a_k \preccurlyeq a_m$ and $w_k < w_m$.} 
\end{array}
\end{equation}
By $T_4$, there exists $b\in D_{w_0}$ such that, for every $a\in D_{w_0}$, 
\begin{equation}
\label{eq:lem:2:main:nat:3}
\begin{array}{lcl}
\kModel{M},w_0 \models b\preccurlyeq a \wedge \wall(a)
  & \imply
  & \kModel{M},w_0 \models P_1(a).
\end{array}
\end{equation}
Since $\kModel{M},w^\ast\not\models \mathit{Refute}_Q$, there exists $w\in\numN$ such that 
$$
\begin{array}{lcl}
\kModel{M},w \models Q(b) 
  & \mbox{and}
  & \kModel{M},w\not\models Q'(b).
\end{array}
$$
By \eqref{eq:lem:2:main:nat:2}, the set $\{w_k : k\in\numN\}$ is infinite, therefore, there exists $k\in\numN$ such that $w_{k}\in R(w)$. It should be clear that 
\begin{equation}
\label{eq:lem:2:main:nat:5}
\kModel{M},w_0 \models b\preccurlyeq a_k. 
\end{equation}
Then, for every $m\in \numN^+$ with $k \leqslant m$, it follows by \eqref{eq:lem:2:main:nat:2} that $\kModel{M},w_0 \models a_k \preccurlyeq a_m$. 
Observe that $\preccurlyeq$ defines a linear preorder on $D_{w_0}$; hence, \eqref{eq:lem:2:main:nat:5} implies $\kModel{M},w_0 \models b \preccurlyeq a_m$.
If $\Wall(m)$, then, by \eqref{eq:lem:2:main:nat:3}, $\kModel{M},w_0 \models P_1(a_m)$, i.e., \eqref{eq:T4} is satisfied with $j^\ast=k$.  
\end{proof}

As an immediate corollary, we obtain the following statement.

\begin{theorem}
\label{th:main:nat}
Let\/ $\QSILext{e}{}\kframe{G}\subseteq L \subseteq \QSIL(\kframe{G}\odot\numN)$. Then $L$ is both\/ $\Sigma^0_1$-hard and\/ $\Pi^0_1$-hard in the language with two individual variables, a single binary predicate letter, and infinitely many unary predicate letters.
\end{theorem}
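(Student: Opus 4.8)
The plan is to split the statement into its two hardness assertions and obtain each by a computable many--one reduction from a tiling problem, reusing the machinery already built. For the $\Sigma^0_1$-hardness I would do no new work. Since $\kframe{G}=\otuple{\numN,\leqslant}$ is linear, every e\nobreakdash-augmented frame over $\kframe{G}$ is a linear intuitionistic augmented frame, whence $\logic{QLC}\subseteq\QSILext{e}{}\kframe{G}$; and since $\kframe{G}\odot\numN$ is itself one e\nobreakdash-augmented frame over $\kframe{G}$, validity on the whole class $\aug{e}{}\kframe{G}$ forces validity on it, so $\QSILext{e}{}\kframe{G}\subseteq\QSIL(\kframe{G}\odot\numN)$. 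Hence the hypothesis $\QSILext{e}{}\kframe{G}\subseteq L\subseteq\QSIL(\kframe{G}\odot\numN)$ tightens to $\logic{QLC}\subseteq L\subseteq\QSIL(\kframe{G}\odot\numN)$, and Theorem~\ref{th:QLC:nat-c} applies unchanged, giving $\Sigma^0_1$-hardness through $T\mapsto\varphi_T$.

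The new content is the $\Pi^0_1$-hardness, which I would establish through the computable map $T\mapsto\psi_T$ together with Lemmas~\ref{lem:1:main:nat} and~\ref{lem:2:main:nat}. The first thing I would record is that, by the definition of truth on a Kripke frame, ``$\kframe{G}\models\psi_T$'' is literally the statement $\psi_T\in\QSILext{e}{}\kframe{G}$, since the models based on $\kframe{G}$ range over all e\nobreakdash-augmented frames over $\kframe{G}$ under every admissible interpretation. With this reading I would prove the equivalence
\[
\psi_T\in L \quad\Longleftrightarrow\quad \text{there is no $T$-tiling satisfying \eqref{eq:T1}--\eqref{eq:T4}.}
\]
For the direction ($\Leftarrow$): if no such tiling exists, the contrapositive of Lemma~\ref{lem:2:main:nat} yields $\kframe{G}\models\psi_T$, i.e.\ $\psi_T\in\QSILext{e}{}\kframe{G}\subseteq L$. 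For ($\Rightarrow$) I would argue contrapositively: if such a tiling exists, then Lemma~\ref{lem:1:main:nat} gives $\kframe{G}\odot\numN,0\not\models\psi_T$, so $\psi_T\notin\QSIL(\kframe{G}\odot\numN)$, and as $L\subseteq\QSIL(\kframe{G}\odot\numN)$ we get $\psi_T\notin L$.

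Finally I would close with the arithmetical bookkeeping. The tiling problem with conditions \eqref{eq:T1}--\eqref{eq:T4} is $\Sigma^0_1$-hard (Section~\ref{sec:app}), so its complement --- ``no $T$-tiling satisfying \eqref{eq:T1}--\eqref{eq:T4} exists'' --- is $\Pi^0_1$-hard; the displayed equivalence then exhibits $T\mapsto\psi_T$ as a computable reduction of this $\Pi^0_1$-hard set to $L$, so $L$ is $\Pi^0_1$-hard, and the two hardness claims together yield the theorem. I do not expect a genuine obstacle, as all the difficulty has been absorbed into the two lemmas; the one point needing care is the inclusion chain $\logic{QLC}\subseteq\QSILext{e}{}\kframe{G}\subseteq\QSIL(\kframe{G}\odot\numN)$ and the complementary roles of the two lemmas --- $\psi_T$ must be \emph{valid throughout} $\aug{e}{}\kframe{G}$ in the no-tiling case yet already \emph{refuted on} the single constant-domain frame $\kframe{G}\odot\numN$ in the tiling case. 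It is exactly this gap between the lower logic $\QSILext{e}{}\kframe{G}$ and the upper logic $\QSIL(\kframe{G}\odot\numN)$ that makes one and the same reduction work simultaneously for every intermediate $L$.
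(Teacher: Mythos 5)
Your proof is correct and takes essentially the same route as the paper, whose entire proof of this theorem is the one-line citation of Theorem~\ref{th:QLC:nat-c} (for $\Sigma^0_1$-hardness) together with Lemmas~\ref{lem:1:main:nat} and~\ref{lem:2:main:nat} (for $\Pi^0_1$-hardness via $T\mapsto\psi_T$) --- exactly your decomposition. The bookkeeping you supply, namely the chain $\logic{QLC}\subseteq\QSILext{e}{}\kframe{G}\subseteq\QSIL(\kframe{G}\odot\numN)$, the identification of $\kframe{G}\models\psi_T$ with $\psi_T\in\QSILext{e}{}\kframe{G}$, and the appeal to the $\Sigma^0_1$-hardness of the tiling problem with conditions \eqref{eq:T1}--\eqref{eq:T4} from Section~\ref{sec:app}, correctly fills in what the paper leaves implicit.
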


\begin{proof}
Immediately follows from Theorem~\ref{th:QLC:nat-c}, Lemma~\ref{lem:1:main:nat}, and Lemma~\ref{lem:2:main:nat}.
\end{proof}

\Rem{
Clearly, $\preccurlyeq$ defines a reflexive and transitive relation, i.e., a preorder.

Notice that $\forall x\forall y\,((Q(x)\to Q(y)) \vee (Q(y)\to Q(x)))\in \logic{QLC}$; hence, in domains of worlds of augmented $\logic{QLC}$-frames, $\preccurlyeq$ corresponds to (non-strict) linear preorders.

============

First of all, let us observe that $\preccurlyeq^{I,w}$ is a linear preorder on $D_w$, for every $w\in W$. Indeed, to see that $\preccurlyeq^{I,w}$ is reflexive, just observe that $\kModel{M},w\models Q(a)\to Q(a)$, for every $a\in D_w$. Next, $\kModel{M},w\models a\preccurlyeq b \wedge b\preccurlyeq c \to a\preccurlyeq c$, for all $a,b,c\in D_w$, since $(\varphi\to \varphi')\wedge (\varphi'\to \varphi'') \to (\varphi\to \varphi'')\in\logic{QInt}$, for all formulas $\varphi$, $\varphi'$, and~$\varphi''$; thus, $\preccurlyeq^{I,w}$ is transitive. 
Finally, $\kframe{F}\models\logic{QLC}$ implies that $\kModel{M},w\models (Q(a)\to Q(b))\vee (Q(b)\to Q(a))$, for all $a,b\in D_w$; by the definition of~$\preccurlyeq$, it means that $\kModel{M},w\models a\preccurlyeq b \vee b\preccurlyeq a$. 

============

Then, by $\mathit{Inclusion}^\preccurlyeq_\lhd$ and \eqref{eq:lem:2:main:3}, for all $k,m\in\numN$,
\begin{equation}
\label{eq:lem:2:main:4}
\begin{array}{lcl}
k\leqslant m & \imply & \kModel{M},w_0\models a_k\preccurlyeq a_m,
\end{array}
\end{equation}
that can be proved by induction on~$m$. 

============
}

\section{Positive fragments}
\setcounter{equation}{0}

A formula $\varphi$ is \defnotion{positive} if $\varphi$ does not contain occurrences of~$\bot$.
For a superintuitionistic logic~$L$, let $L^+$ denote the \defnotion{positive fragment} of~$L$, i.e., the subset of~$L$ consisting of the positive formulas.

Our next step is to show that Theorem~\ref{th:QLC:nat-c} remains true for the positive fragments of the logics. 
To that end, let us eliminate $\bot$ in~$\varphi_T$ and~$\psi_T$. Let $\varphi^+_T$ and~$\psi^+_T$ be the formulas obtained from $\varphi_T$ and~$\psi_T$, respectively, by replacing every occurrence of $\bot$ in $\varphi_T$ and~$\psi_T$ with $\forall x\,Q'(x)$.

\begin{lemma}
\label{lem:1:main:positive}
If there exists a\/ $T$-tiling satisfying\/~\eqref{eq:T1}\/ and\/ \eqref{eq:T2}, then\/ $\kframe{G}\odot\numN,0\not\models\varphi^+_T$.
\end{lemma}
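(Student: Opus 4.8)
The plan is to reuse verbatim the model $\kModel{M} = \otuple{\kframe{G}\odot\numN, I}$ constructed in the proof of Lemma~\ref{lem:1:main}, and to observe that in this specific model the sentence $\forall x\, Q'(x)$ plays exactly the role of $\bot$. Concretely, first I would check that $\kModel{M}, w \not\models \forall x\, Q'(x)$ for every world $w\in\numN$. Indeed, by the definition of $I$ we have $\kModel{M}, v\models Q'(a) \iff a\leqslant v-1$ for all $v,a\in\numN$; taking the witness $v=w$ and the individual $a=w\in D_w$, the inequality $w\leqslant w-1$ fails, so $\kModel{M}, w\not\models Q'(a)$ for this $a$, and since $w\in R(w)$ this already refutes $\forall x\, Q'(x)$ at $w$. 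As $\forall x\, Q'(x)$ is closed, this truth value does not depend on any assignment, and it coincides everywhere with the truth value of $\bot$, which is likewise refuted at every world.

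The second step is a routine substitution argument. Let $\chi = \forall x\, Q'(x)$, and for any formula $\theta$ write $\theta^+$ for the result of replacing every occurrence of $\bot$ in $\theta$ by $\chi$ (so that $\varphi^+_T$ is exactly $(\varphi_T)^+$ in this notation). By induction on the construction of $\theta$ I would show that $\kModel{M}, w\models^g\theta \iff \kModel{M}, w\models^g\theta^+$ for every world $w$ and assignment $g$. The atomic cases other than $\bot$ are trivial, as they are unchanged; for $\theta=\bot$ both $\bot$ and $\chi$ are false at every world, so the equivalence holds by the first step; and the cases for $\wedge$, $\vee$, $\to$, $\forall$, and $\exists$ follow immediately from the induction hypothesis, since the truth conditions are compositional in the immediate subformulas.

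Applying this with $\theta = \varphi_T$ and $w=0$ gives $\kModel{M}, 0\models\varphi^+_T \iff \kModel{M}, 0\models\varphi_T$, and since $\kModel{M}, 0\not\models\varphi_T$ by Lemma~\ref{lem:1:main}, we conclude $\kModel{M}, 0\not\models\varphi^+_T$, hence $\kframe{G}\odot\numN, 0\not\models\varphi^+_T$. The only point requiring a little care is the interaction between the substitution and the reuse of the bound variable $x$: when $\bot$ occurs inside a negation such as $\neg\wall(x) = \wall(x)\to\bot$, the replacement produces $\wall(x)\to\forall x\, Q'(x)$, in which the inner $\forall x$ rebinds $x$. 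This causes no difficulty, however, precisely because $\chi$ is a sentence whose (constant, false) truth value is independent of the assignment; thus no genuine variable capture can affect truth values, and the induction goes through unchanged. I expect this bookkeeping, rather than any substantive mathematical obstacle, to be the only thing worth spelling out — the whole content of the lemma is the single observation that $\forall x\, Q'(x)$ is false throughout the intended model because $Q'$ is interpreted as a proper initial segment of the infinite domain $\numN$ at every world.
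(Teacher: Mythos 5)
Your proof is correct and takes essentially the same route as the paper's: the paper likewise reuses the model $\kModel{M}$ from the proof of Lemma~\ref{lem:1:main} and rests on the single observation that $\kModel{M}\models \forall x\,Q'(x) \leftrightarrow \bot$, from which $\kModel{M},0\not\models\varphi^+_T$ follows at once. The substitution induction and the remark on the harmlessness of rebinding $x$, which the paper leaves implicit, are correctly spelled out in your write-up but add detail rather than a different argument.
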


\begin{proof}
Let $\kModel{M}$ be the model constructed in the proof of Lemma~\ref{lem:1:main}. Note that $\kModel{M}$ is based on $\kframe{G}\odot\numN$ and observe that $\kModel{M}\models \forall x\,Q'(x) \leftrightarrow \bot$. Therefore, $\kModel{M},0\not\models\varphi^+_T$, and hence, $\kframe{G}\odot\numN,0\not\models\varphi^+_T$.
\end{proof}

\begin{lemma}
\label{lem:2:main:positive}
Let\/ $\kframe{F}=\otuple{W,R}$ be a Kripke frame validating\/ $\logic{QLC}$ such that $\kframe{F}\not\models\varphi^+_T$.
Then there exists a $T$-tiling satisfying conditions\/~\eqref{eq:T1}~and\/~\eqref{eq:T2}.
\end{lemma}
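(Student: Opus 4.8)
The plan is to rerun the proof of Lemma~\ref{lem:2:main} almost verbatim, treating the $\bot$-surrogate $\delta:=\forall x\, Q'(x)$ exactly as $\bot$ was treated there. Recall that $\varphi^+_T$ is obtained from $\varphi_T$ by replacing each $\bot$ by $\delta$; since $\mathit{Refute}$ contains no occurrence of $\bot$ we have $\mathit{Refute}^+=\mathit{Refute}$, and the only formulas that change are those carrying a negation, namely $\mathit{EM}_W$, $\mathit{Conn}_1$, $\mathit{Move}_1$, $\mathit{Move}_2$ (through $\neg\wall$, $\neg\abv$, $\neg\rght$) and $T_0$ (through $\neg P_j$). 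First I would pass to a world $w^\ast$ of a model $\kModel{M}$ on $\kframe{F}$ with $\kModel{M},w^\ast\models\mathit{Grid}^+\wedge\mathit{Tiling}^+_T$ and $\kModel{M},w^\ast\not\models\mathit{Refute}$, and extract the anchor $w_0$, $a_0$, $a_1$ together with the chain $a_0\lhd a_1\lhd a_2\lhd\cdots$ exactly as in \eqref{eq:lem:2:main:1}--\eqref{eq:lem:2:main:3}; this step is unaffected, since $\mathit{Serial}_\lhd$ and $\mathit{Refute}$ are unchanged.

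The whole argument then rests on two elementary facts about $\delta$ in any model validating $\logic{QLC}$, which I would isolate at the outset. (i)~Since $\delta=\forall x\, Q'(x)$, whenever $\kModel{M},v\models\delta$ we have $\kModel{M},v\models Q'(a)$ for every $a\in D_v$; thus $\delta$ entails each atom $Q'(a_k)$. (ii)~Every witness world $v$ produced in the construction --- the anchor $w_0$, and the worlds $w$, $w'$, $w''$ appearing in \eqref{eq:4:sublem:lem:2:main}, \eqref{eq:8:sublem:lem:2:main}, \eqref{eq:10:sublem:lem:2:main} and their primed counterparts --- arises precisely by refuting an implication whose consequent carries a disjunct $Q'(a_j)$ for some $j\in\{0,k,k+1\}$; hence $\kModel{M},v\not\models Q'(a_j)$, and so $\kModel{M},v\not\models\delta$ at every such $v$.

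Given (i) and (ii), each use of a negation transfers as follows. Where Lemma~\ref{lem:2:main} argues that an implication $Q(a_k)\wedge\psi(a_m)\to Q'(a_k)\vee\chi(a_m)$ holds vacuously because $\neg\psi(a_m)$ was derived from $\mathit{Conn}_1$ (say $\neg\abv(a_0)$ from $\floor(a_0)$, or $\neg\rght(a_0)$ from $\wall(a_0)$), I would instead use that $\mathit{Conn}_1^+$ supplies $\psi(a_m)\to\delta$, so that at any accessible world meeting the antecedent one gets $\delta$, hence by (i) the consequent disjunct $Q'(a_k)$ --- the implication again holds everywhere. Dually, where the original derives a negated atom $\neg\wall(a_j)$ at a witness world (from $\mathit{EM}_W$, or inside the premises of $\mathit{Move}_1$, $\mathit{Move}_2$) and then uses it, I would note that the positive forms supply the disjunct $\wall(a_j)\to\delta$ --- the first disjunct $\wall(a_j)$ being excluded at $w_0$ by the relevant induction hypothesis $\neg\Wall'(k)$ together with persistence of $\wall$ --- and combine it with (ii), $\kModel{M},v\not\models\delta$, to recover $\kModel{M},v\not\models\wall(a_j)$, exactly the fact used to pass to \eqref{eq:7':sublem:lem:2:main}. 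The same reading turns $T_0^+$ into genuine uniqueness of the tile index at $w_0$ (where $\delta$ is false), so that $f$ remains well defined by \eqref{eq:lem:2:main:5}. With these substitutions Sublemma~\ref{sublem:lem:2:main} and the verifications of \eqref{eq:T1} and \eqref{eq:T2} go through unchanged.

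The main obstacle, and the only place demanding real care, is fact (ii): one must check, case by case through the induction of Sublemma~\ref{sublem:lem:2:main} (both $\Wall(k)$ and $\neg\Wall(k)$), that at every world where a negation is applied some $Q'(a_j)$ is indeed refuted, so that $\delta$ is genuinely false there and the surrogate behaves like $\bot$. This works here only because the construction was arranged so that every relevant witness comes from refuting a consequent of the form $\ldots\vee Q'(a_j)$, and because $\wall$ and $Q'$ are persistent, which lets one transport the statements ``$\delta$ is false'' and ``$\wall(a_k)$ is false'' between $w_0$ and the worlds above~it.
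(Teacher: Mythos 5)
Your proposal is correct and takes essentially the same route as the paper: the paper's proof likewise reruns Lemma~\ref{lem:2:main} step by step, replacing every use of a fact of the form $\kModel{M},w\models\psi\to\bot$ with $\kModel{M},w\models\psi\to\forall x\,Q'(x)$, justified by the single observation that at each relevant witness world $\kModel{M},w\not\models Q'(a_k)$ for some $k\in\numN$, so the surrogate is false there. Your facts (i) and (ii), and the case-by-case check through Sublemma~\ref{sublem:lem:2:main}, are just an explicit spelling-out of that one-line justification.
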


\begin{proof}
Just follow, step by step, the proof of Lemma~\ref{lem:2:main}. Observe that every use of the fact of the form $\kModel{M},w\models \neg\psi$ (i.e., $\kModel{M},w\models \psi\to\bot$) in the proof can be replaced with the use of $\kModel{M},w\models \psi\to\forall x\,Q'(x)$, since $\kModel{M},w\not\models Q'(a_k)$, for some $k\in\numN$, also.
\end{proof}

These observations provide us with the following theorem stating the undecidability for the positive fragments of the logics, which is a refinement of Theorem~\ref{th:QLC:nat-c}.

\begin{theorem}
\label{th:QLC:nat-c:positive}
Let a logic $L$ be such that $\logic{QLC}\subseteq L\subseteq \QSIL(\kframe{G}\odot\numN)$. Then $L^+$ is\/ $\Sigma^0_1$-hard in the language with two individual variables, a single binary predicate letter, and infinitely many unary predicate letters.
\end{theorem}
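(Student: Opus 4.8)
The plan is to repeat, essentially verbatim, the reduction used for Theorem~\ref{th:QLC:nat-c}, but with the positive formula $\varphi^+_T$ in place of $\varphi_T$ and with the positive versions of the two main lemmas. First I would note that, by construction, $\varphi^+_T$ contains no occurrence of $\bot$ (every $\bot$ was replaced by $\forall x\,Q'(x)$), so it is a positive formula; consequently $\varphi^+_T\in L$ if and only if $\varphi^+_T\in L^+$, since $L^+$ is precisely the set of positive members of~$L$. The map $T\mapsto\varphi^+_T$ is plainly computable, so it suffices to establish the equivalence: there is \emph{no} $T$-tiling satisfying \eqref{eq:T1} and \eqref{eq:T2} if and only if $\varphi^+_T\in L^+$.

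For the forward direction I would argue contrapositively through Lemma~\ref{lem:2:main:positive}: if no $T$-tiling satisfying \eqref{eq:T1} and \eqref{eq:T2} exists, then, by the contrapositive of that lemma, no Kripke frame validating $\logic{QLC}$ refutes $\varphi^+_T$, i.e.\ $\varphi^+_T$ is valid on every such frame. Since the logic of the class of all frames validating $\logic{QLC}$ is exactly $\logic{QLC}$ (the same Kripke-completeness step already used in the proof of Theorem~\ref{th:QLC:nat-c}), this gives $\varphi^+_T\in\logic{QLC}\subseteq L$, whence $\varphi^+_T\in L^+$. For the converse, if a $T$-tiling satisfying \eqref{eq:T1} and \eqref{eq:T2} exists, then Lemma~\ref{lem:1:main:positive} yields $\kframe{G}\odot\numN,0\not\models\varphi^+_T$, so $\varphi^+_T\notin\QSIL(\kframe{G}\odot\numN)$; since $L\subseteq\QSIL(\kframe{G}\odot\numN)$ we get $\varphi^+_T\notin L$, and hence $\varphi^+_T\notin L^+$.

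Finally I would invoke the $\Pi^0_1$-completeness of the tiling problem recalled earlier: the set of finite $T$ for which \emph{no} tiling satisfying \eqref{eq:T1} and \eqref{eq:T2} exists is $\Sigma^0_1$-complete, and the computable map $T\mapsto\varphi^+_T$ reduces it to membership in $L^+$ by the equivalence just established. This gives the claimed $\Sigma^0_1$-hardness of $L^+$ in the stated language (which is unchanged, since the replacement $\bot\mapsto\forall x\,Q'(x)$ introduces no new predicate letters and no new variables beyond $x$).

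At the level of the theorem there is really no obstacle: all the difficulty has been pushed into the two positive lemmas, on which I rely directly. The only genuinely delicate point — already discharged inside Lemma~\ref{lem:2:main:positive} — is that the surrogate $\forall x\,Q'(x)$ faithfully plays the role of $\bot$ in the refutation-to-tiling direction, where each step that formerly read $\kModel{M},w\models\neg\psi$ must be reread as $\kModel{M},w\models\psi\to\forall x\,Q'(x)$ and still go through because $Q'(a_k)$ fails at the relevant world for some~$k$. I would therefore keep the proof short, deferring to Theorem~\ref{th:QLC:nat-c} for the overall structure and to Lemmas~\ref{lem:1:main:positive} and~\ref{lem:2:main:positive} for the model-theoretic content.
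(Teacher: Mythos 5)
Your proposal is correct and follows essentially the same route as the paper, whose proof of Theorem~\ref{th:QLC:nat-c:positive} simply mirrors that of Theorem~\ref{th:QLC:nat-c} while invoking Lemmas~\ref{lem:1:main:positive} and~\ref{lem:2:main:positive} in place of Lemmas~\ref{lem:1:main} and~\ref{lem:2:main}. The only difference is that you spell out details the paper leaves implicit (positivity of $\varphi^+_T$, hence $\varphi^+_T\in L \Leftrightarrow \varphi^+_T\in L^+$, and the reduction from the complement of the $\Pi^0_1$-complete tiling problem), all of which is accurate.
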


\begin{proof}
Similar to the proof of Theorem~\ref{th:QLC:nat-c:positive}.
Follows from Lemmas~\ref{lem:1:main:positive} and~\ref{lem:2:main:positive}.
\end{proof}

As a corollary of Theorem~\ref{th:QLC:nat-c:positive} we immediately obtain the following statement.

\begin{corollary}
\label{cor:QLC:positive}
The positive fragments of both\/ $\logic{QLC}$\/ and\/ $\logic{QLC.cd}$\/ are\/ $\Sigma^0_1$-complete, even in the language with two individual variables, a single binary predicate letter, and infinitely many unary predicate letters.
\end{corollary}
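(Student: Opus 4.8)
The plan is to combine the $\Sigma^0_1$-hardness already at hand with the standard observation that both logics, and hence their positive fragments, are recursively enumerable; $\Sigma^0_1$-completeness is precisely the conjunction of these two facts.

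For the hardness half, I would invoke Theorem~\ref{th:QLC:nat-c:positive} twice, once with $L=\logic{QLC}$ and once with $L=\logic{QLC.cd}$. In each case the lower inclusion $\logic{QLC}\subseteq L$ is immediate: trivially for $\logic{QLC}$ itself, and because $\logic{QLC.cd}=\logic{QLC}+\bm{cd}$ is obtained merely by adjoining an axiom. The upper inclusion $L\subseteq\QSIL(\kframe{G}\odot\numN)$ amounts to checking that the frame $\kframe{G}\odot\numN=\otuple{\numN,\leqslant}\odot\numN$ validates $L$. Now $\otuple{\numN,\leqslant}$ is a linear intuitionistic Kripke frame, so it validates $\logic{QLC}$, the logic of all linear frames; and since $\kframe{G}\odot\numN$ carries a constant global domain it satisfies~\eqref{eq:LCD}, so by the characterization of $\bm{cd}$ recalled in Section~\ref{sec:sem} it validates $\bm{cd}$ as well. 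Hence $\logic{QLC.cd}\subseteq\QSIL(\kframe{G}\odot\numN)$ too, and both logics fall inside the interval to which Theorem~\ref{th:QLC:nat-c:positive} applies. That theorem then delivers $\Sigma^0_1$-hardness of $\logic{QLC}^+$ and of $\logic{QLC.cd}^+$ in the language with two individual variables, a single binary predicate letter, and infinitely many unary predicate letters.

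For membership in $\Sigma^0_1$, I would recall that $\logic{QLC}$ is finitely axiomatizable over $\logic{QInt}$ and closed under Modus Ponens, Substitution, and Generalization, hence recursively enumerable; adjoining the single axiom $\bm{cd}$ keeps $\logic{QLC.cd}$ recursively enumerable. Passing to the positive fragment preserves this: $L^+$ is the intersection of the r.e.\ set $L$ with the recursive set of $\bot$-free $\lang{L}$-formulas, and such an intersection is again r.e. Thus $\logic{QLC}^+$ and $\logic{QLC.cd}^+$ both lie in $\Sigma^0_1$, which together with the hardness established above yields $\Sigma^0_1$-completeness.

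I do not expect a genuine obstacle here, since all the substantial work is carried by Theorem~\ref{th:QLC:nat-c:positive}. The only points warranting care are the two frame-validity checks — that $\kframe{G}\odot\numN$ validates $\logic{QLC}$ and $\bm{cd}$, placing both logics in the required interval — and the elementary closure remark that intersecting a recursively enumerable logic with the recursive class of positive formulas remains recursively enumerable. Neither involves the tiling machinery; they merely package the preceding hardness result together with a routine upper-bound argument.
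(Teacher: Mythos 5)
Your proposal is correct and follows essentially the same route as the paper, which derives the corollary immediately from Theorem~\ref{th:QLC:nat-c:positive} by placing both $\logic{QLC}$ and $\logic{QLC.cd}$ in the interval $\logic{QLC}\subseteq L\subseteq \QSIL(\kframe{G}\odot\numN)$ (the latter inclusion holding since $\kframe{G}\odot\numN$ is based on a linear frame and, having constant domains, validates $\bm{cd}$), with the $\Sigma^0_1$ upper bound tacit via recursive axiomatizability. You merely spell out the two frame-validity checks and the routine observation that intersecting an r.e.\ logic with the recursive set of positive formulas stays r.e., which the paper leaves implicit.
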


Finally, let us state a corollary for the positive fragments of some other special extensions of $\logic{QLC}$ mentioned above.

\begin{corollary}
Let\/ $\kframe{F}$ be one of the Kripke frames\/ $\otuple{\numZ,\leqslant}$, $\otuple{\numR,\leqslant}$\/ or\/ $\otuple{\alpha,{\subseteq}}$, where\/ $\alpha$\/ is an infinite ordinal. Then\/ $(\QSILext{e}{}\kframe{F})^+$\/ and\/ $(\QSILext{c}{}\kframe{F})^+$ are\/ $\Sigma^0_1$-hard in the language with two individual variables, a single binary predicate letter, and infinitely many unary predicate letters.
\end{corollary}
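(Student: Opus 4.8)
The plan is to rerun, for each frame $\kframe{F}$ in the list and for each of the two logics $L\in\{\QSILext{e}{}\kframe{F},\QSILext{c}{}\kframe{F}\}$, the reduction $T\mapsto\varphi^+_T$ that underlies Theorem~\ref{th:QLC:nat-c:positive}, i.e.\ the one furnished by Lemmas~\ref{lem:1:main:positive} and~\ref{lem:2:main:positive}, verifying its two endpoints directly rather than invoking the theorem as a black box. For the ``no tiling'' direction, each listed $\kframe{F}$ is a linear order and hence validates $\logic{QLC}$, so Lemma~\ref{lem:2:main:positive} applies to $\kframe{F}$; contrapositively, the absence of a $T$-tiling satisfying~\eqref{eq:T1} and~\eqref{eq:T2} gives $\kframe{F}\models\varphi^+_T$, that is $\varphi^+_T\in\QSILext{e}{}\kframe{F}\subseteq\QSILext{c}{}\kframe{F}$ (the inclusion being the general fact $\QSILext{e}{}\scls{C}\subseteq\QSILext{c}{}\scls{C}$), and since $\varphi^+_T$ is positive it lands in $L^+$. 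For the ``tiling exists'' direction, Lemma~\ref{lem:1:main:positive} yields $\kframe{G}\odot\numN,0\not\models\varphi^+_T$, and it remains to transfer this single refutation to a c-augmented (hence also e-augmented) frame over~$\kframe{F}$, forcing $\varphi^+_T\notin L$. Since non-existence of an admissible $T$-tiling is $\Sigma^0_1$-complete, this map reduces it to membership in $L^+$, giving $\Sigma^0_1$-hardness.

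The transfer step treats the three frames separately and is the crux. For $\kframe{F}=\otuple{\numZ,\leqslant}$ the cone $\{n:n\geqslant 0\}$ is a generated subframe isomorphic to $\kframe{G}$, so I would copy the refuting model of Lemma~\ref{lem:1:main:positive} onto it and extend to all of $\numZ$ by interpreting every predicate letter as empty at the negative worlds; heredity is immediate, and since the cone is upward closed the refutation at $0$ is untouched. For $\kframe{F}=\otuple{\numR,\leqslant}$ the cone $\{x:x\geqslant 0\}$ carries the floor map $x\mapsto\lfloor x\rfloor$ as a surjective p-morphism onto $\kframe{G}$; pulling the refuting model back along it (retaining the constant domain $\numN$ and setting $I(x,P)=I(\lfloor x\rfloor,P)$) preserves the truth of every formula, after which one extends by empty interpretations to the remaining worlds. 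Both constructions work for arbitrary formulas, so positivity is not needed here.

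The genuinely delicate case is $\kframe{F}=\otuple{\alpha,\subseteq}$ for an infinite ordinal $\alpha$, and here positivity is essential: the full-logic inclusion $\QSILext{c}{}\otuple{\alpha,\subseteq}\subseteq\QSIL(\kframe{G}\odot\numN)$ fails in general (for instance $\neg\neg\forall x\,(P(x)\vee\neg P(x))$ is valid over $\otuple{\omega+1,\subseteq}$ yet refuted on $\kframe{G}\odot\numN$), so no frame inclusion is available to feed into Theorem~\ref{th:QLC:nat-c:positive}. Instead I would identify the initial segment $\omega\subseteq\alpha$ with $\kframe{G}$, place the refuting model there, and interpret every predicate letter as the full relation on $\numN$ at every world of the upward-closed tail $[\omega,\alpha)$. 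Heredity holds because the tail carries the full interpretation, and since $[\omega,\alpha)$ is an up-set on which all atoms hold, a routine induction shows that every positive formula is true throughout it; hence the extra worlds never refute an implication or a universal quantifier, and the truth value of each positive subformula at the finite worlds—so the refutation of $\varphi^+_T$ at $0$—agrees with that in the model of Lemma~\ref{lem:1:main:positive}. This produces the required refutation on the globally-constant-domain frame $\otuple{\alpha,\subseteq}\odot\numN$ and completes the ``tiling exists'' direction for all three frames. The main obstacle is precisely this ordinal case, where the sandwiching argument breaks down at the full-logic level and the positivity of $\varphi^+_T$ must be used to neutralize the worlds above $\omega$.
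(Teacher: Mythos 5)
Your proposal is correct, and it is in fact more complete than what the paper records: the paper states this corollary without proof, leaving the reader to extract it from Theorem~\ref{th:QLC:nat-c:positive} together with Lemmas~\ref{lem:1:main:positive} and~\ref{lem:2:main:positive}, exactly the reduction $T\mapsto\varphi^+_T$ you rerun. Your ``no tiling'' direction matches the intended argument verbatim (each listed frame is a linear order, hence validates $\logic{QLC}$, so the contrapositive of Lemma~\ref{lem:2:main:positive} gives $\kframe{F}\models\varphi^+_T$, and positivity puts $\varphi^+_T$ into $L^+$). Where you add genuine content is the transfer step for the ``tiling exists'' direction. For $\otuple{\numZ,\leqslant}$ and $\otuple{\numR,\leqslant}$ your constructions (copying the refuting model of Lemma~\ref{lem:1:main:positive} onto the upward-closed cone of $0$ and extending by empty interpretations below, respectively pulling back along the floor p-morphism $x\mapsto\lfloor x\rfloor$ with constant domain $\numN$) are the standard moves and, as you note, work for arbitrary formulas; they amount to verifying the sandwich $\QSILext{c}{}\kframe{F}\subseteq\QSIL(\kframe{G}\odot\numN)$ that lets one quote Theorem~\ref{th:QLC:nat-c:positive} directly. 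Your diagnosis of the ordinal case is the sharpest part: for successor ordinals such as $\omega+1$ the full-logic inclusion indeed fails (your witness $\neg\neg\forall x\,(P(x)\vee\neg P(x))$ is valid on any linear frame with a top world but refuted on $\kframe{G}\odot\numN$, e.g.\ via $I(n,P)=\{m : m<n\}$), so no sandwiching argument is available, and your fix --- saturating the up-set $[\omega,\alpha)$ with full interpretations and running the induction showing that every positive formula holds throughout the tail, whence truth of positive subformulas at the finite worlds coincides with the model of Lemma~\ref{lem:1:main:positive} --- is exactly the use of positivity that makes the corollary go through for $\otuple{\alpha,\subseteq}\odot\numN$. (For limit ordinals a p-morphism onto $\otuple{\numN,\leqslant}$ does exist, so the failure you exhibit is specifically the top-element phenomenon; your tail-saturation handles both cases uniformly, which is cleaner.) In short: same reduction and same two key lemmas as the paper, plus a correct and necessary elaboration of the transfer step that the paper leaves implicit.
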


Of course, similar statements hold for the logics between $\QSILext{e}{}\kframe{G}$ and $\QSIL(\kframe{G}\odot\numN)$ but concerning $\Pi^0_1$-hardness.

\begin{lemma}
\label{lem:1:main:nat:positive}
If there exists a\/ $T$-tiling satisfying\/~\eqref{eq:T1}, \eqref{eq:T2}, \eqref{eq:T3}\/ and\/ \eqref{eq:T4}, then\/ $\kframe{G}\odot\numN,0\not\models\psi^+_T$.
\end{lemma}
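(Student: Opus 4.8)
The plan is to follow, essentially verbatim, the proof of Lemma~\ref{lem:1:main:positive}, only feeding in the construction of Lemma~\ref{lem:1:main:nat} in place of that of Lemma~\ref{lem:1:main}. Since by hypothesis there is a $T$-tiling satisfying \eqref{eq:T1}--\eqref{eq:T4}, the proof of Lemma~\ref{lem:1:main:nat} produces a model $\kModel{M}$ based on $\kframe{G}\odot\numN$ with $\kModel{M},0\not\models\psi_T$. I would reuse this same $\kModel{M}$, whose interpretation of the labelling letters is exactly the one fixed in the proof of Lemma~\ref{lem:1:main}.

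The one thing to establish is the global equivalence $\kModel{M}\models\forall x\,Q'(x)\leftrightarrow\bot$, i.e.\ that $\forall x\,Q'(x)$ is refuted at every world of $\kModel{M}$. This is immediate from the clause $\kModel{M},w\models Q'(a)\iff a\leqslant w-1$: the domain is the constant set $\numN$, so for each world $w$ the element $a=w$ witnesses $\kModel{M},w\not\models Q'(w)$, whence $\forall x\,Q'(x)$ fails at $w$. As $\bot$ is likewise false everywhere, the implication $\forall x\,Q'(x)\to\bot$ holds vacuously at every world while $\bot\to\forall x\,Q'(x)$ holds by ex falso, so the biconditional $\forall x\,Q'(x)\leftrightarrow\bot$ is valid throughout $\kModel{M}$.

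Granting this, I would invoke the intuitionistic replacement-of-equivalents principle: since $\psi^+_T$ arises from $\psi_T$ by replacing each occurrence of $\bot$ with $\forall x\,Q'(x)$, and the two are equivalent at every world of $\kModel{M}$, the formulas $\psi_T$ and $\psi^+_T$ take the same truth value at each world of $\kModel{M}$. In particular $\kModel{M},0\not\models\psi^+_T$, and since $\kModel{M}$ is based on $\kframe{G}\odot\numN$ this gives $\kframe{G}\odot\numN,0\not\models\psi^+_T$. The only point requiring genuine (though entirely routine) care is that the equivalence must hold at \emph{all} worlds, not merely at $0$, because the substituted occurrences of $\bot$ sit at varying subformula depths and are evaluated at various worlds reachable from $0$; the global validity just established is exactly what licenses the induction on formula structure behind the replacement principle, so I would cite it rather than spell it out.
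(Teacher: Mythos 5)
Your proposal is correct and takes essentially the same route as the paper, whose entire proof reads ``Similar to the proof of Lemma~\ref{lem:1:main:positive}'': reuse the model $\kModel{M}$ over $\kframe{G}\odot\numN$ from Lemma~\ref{lem:1:main:nat}, observe that $\kModel{M}\models\forall x\,Q'(x)\leftrightarrow\bot$ at every world, and conclude $\kModel{M},0\not\models\psi^+_T$ by replacement of equivalents. Your explicit checks~--- that $a=w$ witnesses the failure of $\forall x\,Q'(x)$ at each world $w$ (so the equivalence is global, as the replacement induction requires), and that the substituted sentence $\forall x\,Q'(x)$ is closed, avoiding variable capture~--- are exactly the routine details the paper leaves implicit.
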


\begin{proof}
Similar to the proof of Lemma~\ref{lem:1:main:positive}.
\end{proof}

\begin{lemma}
\label{lem:2:main:nat:positive}
Let\/ $\kframe{G}\not\models\psi^+_T$.
Then there exists a $T$-tiling satisfying\/~\eqref{eq:T1} \eqref{eq:T2}, \eqref{eq:T3},\/ and\/ \eqref{eq:T4}.
\end{lemma}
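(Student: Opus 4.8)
The plan is to combine the two refinements already at hand: the positive-translation device of Lemma~\ref{lem:2:main:positive} and the extraction of conditions~\eqref{eq:T3} and~\eqref{eq:T4} carried out in Lemma~\ref{lem:2:main:nat}. First I would unpack the hypothesis. Writing $(\cdot)^+$ for the replacement of $\bot$ by $\forall x\,Q'(x)$ and noting that $\mathit{Refute}'=\mathit{Refute}\vee\mathit{Refute}_Q$, $\mathit{Agree}_\preccurlyeq$, $T_3$, and $T_4$ contain no occurrence of $\bot$, the positive formula reads $\psi^+_T=\mathit{Grid}^+\wedge\mathit{Agree}_\preccurlyeq\wedge\mathit{Tiling}^+_T\wedge T_3\wedge T_4\to\mathit{Refute}'$. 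From $\kframe{G}\not\models\psi^+_T$ I obtain a model $\kModel{M}$ over $\kframe{G}$ and a world $w^\ast$ at which the antecedent holds while $\mathit{Refute}'$ fails; in particular $\kModel{M},w^\ast\models\mathit{Grid}^+\wedge\mathit{Tiling}^+_T$ together with $\kModel{M},w^\ast\not\models\mathit{Refute}$ and $\kModel{M},w^\ast\not\models\mathit{Refute}_Q$.

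Since $\kframe{G}$ is linear and hence validates $\logic{QLC}$, the first two of these data place me exactly in the situation treated in the proof of Lemma~\ref{lem:2:main:positive}. I would therefore run that proof verbatim to obtain the world $w_0$, the $\lhd$-chain $a_0,a_1,a_2,\dots$, and the $T$-tiling $f$ defined by~\eqref{eq:lem:2:main:6}, already known to satisfy~\eqref{eq:T1} and~\eqref{eq:T2}. The only point requiring care there is that this proof replaces each appeal to $\kModel{M},w\models\neg\psi$ by $\kModel{M},w\models\psi\to\forall x\,Q'(x)$; this is legitimate because at every world $w$ occurring in the argument there is an element carrying $Q$ but not $Q'$ (the current ``counter position''), so $\forall x\,Q'(x)$ fails at $w$ and the implication behaves exactly like the negation. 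Condition~\eqref{eq:T3} is then immediate: the cell $(0,0)$ corresponds to $a_0$, which satisfies $\wall(a_0)\wedge\floor(a_0)$, so $T_3$ forces $\kModel{M},w_0\models P_0(a_0)$, whence $f(0,0)=t_0$ by~\eqref{eq:lem:2:main:6}.

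It remains to secure~\eqref{eq:T4}, and here I would simply repeat the corresponding part of the proof of Lemma~\ref{lem:2:main:nat}: using $\kModel{M},w^\ast\not\models\mathit{Refute}_Q$, the axiom $T_4$, and $\mathit{Agree}_\preccurlyeq$ together with $\mathit{Diag}_Q$, I choose the witnesses $w_k$ satisfying~\eqref{eq:lem:2:main:nat:1}, the element $b$ satisfying~\eqref{eq:lem:2:main:nat:3}, and a world $w$ with $\kModel{M},w\models Q(b)$ and $\kModel{M},w\not\models Q'(b)$; the infinitude of $\{w_k\}$ then yields a threshold $k$ beyond which $P_1$ holds along the wall, giving~\eqref{eq:T4} with $j^\ast=k$. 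Crucially, none of $\mathit{Agree}_\preccurlyeq$, $T_3$, $T_4$, $\mathit{Refute}_Q$ involves $\bot$, and every ``$\not\models$'' used in this part is metatheoretic rather than an object-level negation, so no further positive translation is needed and the argument transfers word for word. The main thing I would double-check is the consistency of the two bookkeeping schemes: the auxiliary worlds $w_k$ and $w$ introduced for~\eqref{eq:T4} must still carry the ``$Q$ without $Q'$'' property so that $\forall x\,Q'(x)$ fails at them and the positive reading inherited from the base construction stays sound; this holds by construction via~\eqref{eq:lem:2:main:nat:1} and the choice of $w$, so the two arguments fit together without clash and the proof closes.
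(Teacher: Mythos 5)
Your proposal is correct and matches the paper's intended argument: the paper proves Lemma~\ref{lem:2:main:nat:positive} simply by declaring it ``similar to the proof of Lemma~\ref{lem:2:main:positive}'', i.e.\ exactly your combination of the positive translation (replacing $\bot$ by $\forall x\,Q'(x)$, sound because some element satisfies $Q$ but not $Q'$ at the relevant worlds) with the $T_3$/$T_4$ extraction from Lemma~\ref{lem:2:main:nat}. Your additional checks---that $\mathit{Agree}_\preccurlyeq$, $T_3$, $T_4$, and $\mathit{Refute}'$ are already $\bot$-free and that the auxiliary worlds $w_k$ and $w$ retain the ``$Q$ without $Q'$'' property---are precisely the details the paper leaves implicit.
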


\begin{proof}
Similar to the proof of Lemma~\ref{lem:2:main:positive}.
\end{proof}

\begin{theorem}
\label{th:main:nat:pos}
Let\/ $\QSILext{e}{}\kframe{G}\subseteq L \subseteq \QSIL(\kframe{G}\odot\numN)$. Then $L^+$ is both\/ $\Sigma^0_1$-hard and\/ $\Pi^0_1$-hard in the language with two individual variables, a single binary predicate letter, and infinitely many unary predicate letters.
\end{theorem}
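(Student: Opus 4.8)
The plan is to mirror the proof of Theorem~\ref{th:main:nat}, replacing the formulas $\varphi_T$ and $\psi_T$ by their $\bot$-free counterparts $\varphi^+_T$ and $\psi^+_T$ throughout, so that the two hardness claims rest on the positive-fragment reductions already established. The two bounds come from two separate tiling problems: $\Sigma^0_1$-hardness from the first problem (conditions~\eqref{eq:T1}--\eqref{eq:T2}, which is $\Pi^0_1$-complete), and $\Pi^0_1$-hardness from the richer problem (conditions~\eqref{eq:T1}--\eqref{eq:T4}, which is $\Sigma^0_1$-hard).

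For $\Sigma^0_1$-hardness I would simply invoke Theorem~\ref{th:QLC:nat-c:positive}. Since $\kframe{G}=\otuple{\numN,\leqslant}$ is linear it validates $\logic{QLC}$, whence $\logic{QLC}\subseteq\QSILext{e}{}\kframe{G}$; combined with the hypothesis $\QSILext{e}{}\kframe{G}\subseteq L\subseteq\QSIL(\kframe{G}\odot\numN)$ this gives $\logic{QLC}\subseteq L\subseteq\QSIL(\kframe{G}\odot\numN)$, which is exactly the hypothesis of Theorem~\ref{th:QLC:nat-c:positive}. That theorem then yields $\Sigma^0_1$-hardness of $L^+$ directly.

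For $\Pi^0_1$-hardness I would reduce the complement of the second tiling problem to membership in $L^+$ via the map $T\mapsto\psi^+_T$. The key is the biconditional: there is no $T$-tiling satisfying~\eqref{eq:T1}--\eqref{eq:T4} if and only if $\psi^+_T\in L^+$. The forward direction is the contrapositive of Lemma~\ref{lem:2:main:nat:positive}: if no such tiling exists then $\kframe{G}\models\psi^+_T$, i.e. $\psi^+_T\in\QSILext{e}{}\kframe{G}\subseteq L$, and since $\psi^+_T$ is positive this means $\psi^+_T\in L^+$. The backward direction uses Lemma~\ref{lem:1:main:nat:positive}: if a tiling exists then $\kframe{G}\odot\numN,0\not\models\psi^+_T$, so $\psi^+_T\notin\QSIL(\kframe{G}\odot\numN)\supseteq L$, hence $\psi^+_T\notin L^+$. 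As the second tiling problem is $\Sigma^0_1$-hard (Section~\ref{sec:app}), its complement is $\Pi^0_1$-hard, and the computable map $T\mapsto\psi^+_T$ is the required reduction.

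The substantive work has already been carried out in the two positive lemmas, so the only point demanding care at this stage is the passage between $L$ and $L^+$. This is legitimate precisely because $\psi^+_T$ (and $\varphi^+_T$) are by construction free of $\bot$: membership of a positive formula in $L$ coincides with its membership in $L^+$, and the inclusion $\QSILext{e}{}\kframe{G}\subseteq L$ restricts to $(\QSILext{e}{}\kframe{G})^+\subseteq L^+$. The genuine obstacle, if there is one, lies upstream in the positive lemmas themselves---verifying that replacing $\bot$ by $\forall x\,Q'(x)$ preserves both directions of the reduction, which hinges on $\forall x\,Q'(x)$ behaving like $\bot$ in the constructed model and on the fact that $\kModel{M},w\not\models Q'(a_k)$ for some $k$ in the abstract direction. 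Granting those lemmas, the present theorem follows immediately.
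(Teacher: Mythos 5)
Your proposal is correct and follows exactly the paper's route: the paper proves this theorem by citing precisely Theorem~\ref{th:QLC:nat-c:positive} (for $\Sigma^0_1$-hardness) together with Lemmas~\ref{lem:1:main:nat:positive} and~\ref{lem:2:main:nat:positive} (for $\Pi^0_1$-hardness via the reduction $T\mapsto\psi^+_T$), and your write-up merely spells out the details the paper leaves as ``immediately follows.'' Your observations about $\logic{QLC}\subseteq\QSILext{e}{}\kframe{G}$ and about positive formulas belonging to $L$ iff to $L^+$ are exactly the implicit glue in the paper's one-line proof.
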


\begin{proof}
Immediately follows from Theorem~\ref{th:QLC:nat-c:positive}, Lemma~\ref{lem:1:main:nat:positive}, and Lemma~\ref{lem:2:main:nat:positive}.
\end{proof}

\section{Conclusion}
\setcounter{equation}{0}

The undecidability of $\logic{QLC}$ is proved, and, clearly, the methods used can be applied to other logics. In particular, they can be used in various modal predicate logics, because the modal language is more flexible than intuitionistic. Some results in this direction have already been obtained, and perhaps they will be published soon.
Also, the methods seem working in the predicate counterparts of the basic and formal propositional logics introduced by A.\,Visser~\cite{Visser81}.

Let us turn to the language used. Two questions naturally arise: Do we need a binary predicate letter in the construction? Do we need infinitely many unary predicate letters in it?

It is known that in many cases the atomic predicate formulas can be simulated by the monadic~--- modal or superintuitionistic~--- formulas~\cite{Kripke62,HC96,MR:2001:LI,MR:2002:LI,KKZ05,RSh19SL,RShJLC20a,RShJLC21b}. So, $P(x,y)$ can be simulated by $\Diamond(Q_1(x)\wedge Q_2(y))$ in the modal language~\cite{Kripke62,HC96} and by $(Q_1(x)\wedge Q_2(y)\to p)\vee q$ in the intuitionistic~\cite{RShJLC21b,RShsubmitted2}.\footnote{In the intuitionistic case, the method can be applied to the positive formulas only; however, $\bot$ can be eliminated in `natural' situations~\cite{RShsubmitted2}.} Notice that there are limitations that do not allow the use of such simulations in some cases: in Kripke frames, we need worlds seeing `sufficiently many' worlds (in the modal case) or seeing `sufficiently large' antichains of worlds (in the intuitionistic case). In view of the last requirement, such simulation is not applicable to~$\logic{QLC}$ and its extensions. 

However, the binary predicate letter we used corresponds in models, as we have seen, to a quite special relation which can be understood as `taking the next element' in a linearly ordered set: so, in the proof of Lemma~\ref{lem:1:main} we define $k\lhd m$ as $m=k+1$ and in the proof of Lemma~\ref{lem:2:main} the truth of $a_k \lhd a_m$ means again that $m=k+1$. There is no difficulty to define a linear order that agrees with such `next' relation: just take $\preccurlyeq$ defined by~\eqref{eq:preceq} and use $\mathit{Agree}_\preccurlyeq$.
%
%
Clearly, in a model based on a frame validating~$\logic{QLC}$, relations satisfying these conditions are linear preorders. But then we run into a problem: is it possible to define something like $x\lhd y$ using the binary relation corresponding to $Q(y)\to Q(x)$? Of course, this is not hard to do if there is a third individual variable. Moreover, sometimes this problem can be easily solved in the modal case by means of the monadic language with two individual variables~\cite{MR:2024:IGPL}. At the same time, there are results showing us that the monadic fragments of modal predicate logics can be decidable~\cite{MR:2017:LI,RShsubmitted,RShsubmitted2}; they are obtained for logics complete with respect to quite simple semantics, and the semantics for~$\logic{QLC}$ does not seem to be too complicated. As a summary, the answer to the first question is unclear to the author.

The situation with the second question is quite similar. On the one hand, to simulate all unary predicate letters in an intuitionistic formula by formulas with a single one~\cite{RSh19SL,RShJLC21b}, the construction~\cite{Rybakov06,Rybakov08} used requires frames containing `quite large' antichains. On the other hand, there are constructions for modal logics that allow us to simulate all unary predicate letters in a modal formula by formulas with a single unary letter, and, which is important, to deal with linear Kripke frames only~\cite{RSh20AiML,RShJLC21c,MR:2024:IGPL}. So, the answer to the second question is unclear to the author, too.

Nevertheless, despite the non-obviousness, these questions do not seem too hopeless, and perhaps they will be the subject of future research by the author.

Another question concerns the logic of $\omega^\ast$ viewed as a Kripke frame; and the same holds for logics of linear Noetherian orders as well. It seems that we can argue similarly to Section~\ref{sec:hight:undec}. Indeed, we can define a linear preorder $\preccurlyeq$~by
$$
\begin{array}{lcl}
x\preccurlyeq y & \bydef & Q(x)\to Q(y),
\end{array}
$$
and then agree it with $\lhd$ as before. Actually, it is not hard to prove $\Pi^0_1$-hardness of the logics but with three individual variables in the language; also, it is known that in the modal case, two variables are sufficient even for $\Pi^1_1$-hardness~\cite{MR:2024:IGPL}. The algorithmic complexity of the superintuitionistic logics of Noetherian orders in the language with two individual variables is, again, the subject of future research.

\section{Appendix}
\label{sec:app}
\setcounter{equation}{0}

Here, we consider the following tiling problem: given a set $T=\{t_0,t_1,\ldots,t_n\}$ of tile types, we are to determine whether there exists a $T$-tiling satisfying~\eqref{eq:T1}, \eqref{eq:T2}, \eqref{eq:T3}, and~\eqref{eq:T4}. We shall show that this problem is $\Sigma^0_1$-hard. 

The idea is to encode Turing machines with tile types so that the $k$th row in a tiling corresponds to the $k$th configuration of the computation when a machine starts with the blank tape; when encoding, we add to the program of the machine a fictitious instruction impose the computation to loop in the halting state. Since we may assume Turing machines to stop at the leftmost cell of the tape only, we then obtain that the leftmost tile of the tiling starts repeating from row to row. The rest follows from $\Sigma^0_1$-completeness of the corresponding halting problem for Turing machines.

In the rest of the section, we just describe the construction in detail.

Let us briefly define the modification of Turing machines~\cite{LP98,Sipser12} we shall use. 
A \defnotion{Turing machine} is a tuple $M = \langle \Sigma, Q, q_0, q_1, \delta \rangle$, where $\Sigma$ is a finite alphabet such that $\Box, \# \in \Sigma$ ($\Box$ is the \defnotion{blank symbol} and $\#$ is the \defnotion{end tape marker symbol}), $Q$ is a finite set of \defnotion{states}; $q_0 \in Q$ is the \defnotion{initial state}; $q_1 \in Q$ is the \defnotion{halting state}; and $\delta$ is a \defnotion{program}, i.e., a function $\delta\colon Q\times\Sigma\to Q\times\Sigma\times\{L,S,R\}$ satisfying the following conditions: if $\delta\colon{q}{s}\mapsto{q'}{s'}{\Delta}$, then 
\begin{itemize}
\item $s=\#$ if, and only if, $s'=\#$; 
\item $\Delta\ne L$ whenever $s=\#$; 
\item $q'=q_1$, $s'=s$, and $\Delta=S$ whenever $q=q_1$.
\end{itemize}
The last condition is, in fact, the only modification we need.

\pagebreak[3]

A \defnotion{configuration} of a machine $M = \langle \Sigma, Q, q_0, q_1, \delta \rangle$ is an $\omega$-word $vqv'$, where $q \in Q$ and $vv'=a_0a_1a_2\ldots{}$ is an $\omega$-word over $\Sigma$ satisfying the following conditions: 
\begin{itemize}
\item there exists $k\in\numNp$ such that $a_i = \Box$, for every $i\geqslant k$;
\item $a_i = \#$ if, and only if, $i=0$.
\end{itemize}

A Turing machine $M$ can be thought of as a computing device equipped with a tape divided into an infinite sequence of cells $c_0,c_1,c_2,\ldots{}$, each containing a symbol from $\Sigma$, with one cell being scanned by a movable head. Then, a configuration $vqv'$ of $M$ represents a computation instant at which the tape contains the symbols of the word $vv'$, $M$ is in state $q$, and the head is scanning the cell containing the first symbol of~$v'$.  An \defnotion{instruction} $\delta\colon{q}{s}\mapsto{q'}{s'}{\Delta}$ is applicable to this configuration just in case $M$ is in state $q$ and is scanning a cell containing~$s$.  As a result of applying this instruction, $M$ enters state $q'$, replaces $s$ with $s'$ in the cell, and either moves one cell to the left or to the right or stays put, depending on whether $\Delta$ is $L$, $R$ or~$S$, respectively. Given a word $x$ over $\Sigma\setminus\{\Box,\#\}$ as an \defnotion{input}, $M$ consecutively executes the instructions of~$\delta$ starting from the configuration $q_0\# x \Box\Box\Box\ldots{}$; if $M$ reaches a configuration whose state component $q$ is $q_1$, then $M$ \defnotion{halts} on $x$.
Without a loss of generality, we may assume that the cell being scanned when $M$ halts is $c_0$ (which contains~$\#$). Notice that then the instruction $\delta\colon q_1\#\mapsto {q_1\#}S$ can be applied providing us with the same configuration. This means that even if $M$ halts, we may consider the infinite computation of $M$, in which $M$ loops the same halting configuration.

\Rem{
\begin{figure}
\centering
\begin{tikzpicture}[scale=2.1]
\drawtileflattm{(0,0)}{(1,0)}{(1,1)}{(0,1)}{$\leftsq t$}{$\downsq t$}{$\rightsq t$}{$\upsq t$}{$t$}
\end{tikzpicture}
\caption{Tile with marks}
\label{fig:1}
\end{figure}
}
\begin{figure}
\centering
\begin{tikzpicture}[scale=2.1]
\drawtileflattm{(0,0)}{(1,0)}{(1,1)}{(0,1)}{{$\otimes$}}{{$\otimes$}}{{${\ast}{\ast}$}}{{$q_0\#$}}{$t_0$}
\end{tikzpicture}
\hspace{0.75em}
\begin{tikzpicture}[scale=2.1]
\drawtileflattm{(0,0)}{(1,0)}{(1,1)}{(0,1)}{${\ast}{\ast}$}{$\otimes$}{${\ast}{\ast}$}{$\Box$}{$t_\Box^{\ast\ast}$}
\end{tikzpicture}
\hspace{0.75em}
\begin{tikzpicture}[scale=2.1]
\drawtileflattm{(0,0)}{(1,0)}{(1,1)}{(0,1)}{{$\ast$}}{{$s$}}{{$\ast$}}{{$s$}}{$t_s^\ast$}
\end{tikzpicture}
\hspace{0.75em}
\begin{tikzpicture}[scale=2.1]
\drawtileflattm{(0,0)}{(1,0)}{(1,1)}{(0,1)}{{$\otimes$}}{{$\#$}}{{$\ast$}}{{$\#$}}{$t_{\#}^\ast$}
\end{tikzpicture}
\caption{Tile types $t_0$, $t_\Box^{\ast\ast}$, $t_s^\ast$, and $t_{\#}^\ast$}
\label{fig:3}
\end{figure}

Let us encode a Turing machine~$M$ with a set of tile types.
First of all, let us define tile types $t_0$ and $\leftsq  t_\Box^{\ast\ast}$, which are the same for all Turing machines:
\begin{itemize}
\item $\leftsq t_0 = \otimes$, $\rightsq t_0 = {\ast\ast}$, $\upsq t_0 = q_0\#$, $\downsq t_0 = \otimes$;
\item 
$\leftsq  t_\Box^{\ast\ast} = \rightsq t_\Box^{\ast\ast} = {\ast}{\ast}$, 
$\upsq    t_\Box^{\ast\ast} = \Box$, 
$\downsq  t_\Box^{\ast\ast} = \otimes$,
\end{itemize}
where $\otimes$ and $\ast$ are new symbols. Next, for every $s\in\Sigma\setminus\{\#\}$, define $t_s^\ast$ and $t_{\#}^\ast$ by
\begin{itemize}
\item $\leftsq t_s^\ast = \rightsq t_s^\ast = \ast$, $\upsq t_s^\ast = \downsq t_s^\ast = s$;
\item $\leftsq t_{\#}^\ast = \otimes$, $\rightsq t_{\#}^\ast = \ast$, $\upsq t_{\#}^\ast = \downsq t_{\#}^\ast = \#$,
\end{itemize}
see Figure~\ref{fig:3}. 
For every $q\in Q_0$ and $s\in\Sigma_0$, we define tile types depending on the instruction $\delta\colon qs\mapsto q's'\Delta$. 
%
If $\delta_0\colon qs\mapsto q's'S$, then define $t_{qs}$ by
\begin{itemize}
\item $\leftsq t_{qs} = \rightsq t_{qs} = \ast$, $\upsq t_{qs} = q's'$, $\downsq t_{qs} = qs$, where $s\ne\#$;
\item $\leftsq t_{q\#} = \otimes$, $\rightsq t_{q\#} = \ast$, $\upsq t_{q\#} = q'\#$, $\downsq t_{q\#} = q\#$,
\end{itemize}
see Figure~\ref{fig:4}. 
\begin{figure}
\centering
\begin{tikzpicture}[scale=2.1]
\drawtileflattm{(0,0)}{(1,0)}{(1,1)}{(0,1)}{{$\ast$}}{{$qs$}}{{$\ast$}}{{$q's'$}}{$t_{qs}$}
\end{tikzpicture}
\hspace{1em}
\begin{tikzpicture}[scale=2.1]
\drawtileflattm{(0,0)}{(1,0)}{(1,1)}{(0,1)}{{$\otimes$}}{{$q\#$}}{{$\ast$}}{{$q'\#$}}{$t_{q\#}$}
\end{tikzpicture}
\caption{Tile types for the instructions $\delta_0\colon qs\mapsto q's'S$ and $\delta_0\colon q\#\mapsto q'\#S$}
\label{fig:4}
\end{figure}
%
If $\delta_0\colon qs\mapsto q's'R$, then define $t_{qs}$ and $t_{qs}^{a}$, for every $a\in\Sigma_0\setminus\{\#\}$, by
\begin{itemize}
\item 
$\leftsq t_{qs} = \ast$,
$\rightsq t_{qs} = qs$, 
$\upsq t_{qs} = s'$, 
$\downsq t_{qs} = qs$, where $s\ne\#$;
\item 
$\leftsq t_{q\#} = \otimes$,
$\rightsq t_{q\#} = q\#$, 
$\upsq t_{q\#} = \#$, 
$\downsq t_{q\#} = q\#$;
\item 
$\leftsq t_{qs}^{a} = qs$,
$\rightsq t_{qs}^{a} = \ast$, 
$\upsq t_{qs}^{a} = q'a$, 
$\downsq t_{qs}^{a} = a$,
\end{itemize}
see Figure~\ref{fig:5}. 
\begin{figure}
\centering
\begin{tikzpicture}[scale=2.1]
\drawtileflattm{(0,0)}{(1,0)}{(1,1)}{(0,1)}{{$\ast$}}{{$qs$}}{{$qs$}}{{$s'$}}{$t_{qs}$}
\drawtileflattm{(1,0)}{(2,0)}{(2,1)}{(1,1)}{{$qs$}}{{$a$}}{{$\ast$}}{{$q'a$}}{$t_{qs}^{a}$}
\end{tikzpicture}
\hspace{1em}
\begin{tikzpicture}[scale=2.1]
\drawtileflattm{(0,0)}{(1,0)}{(1,1)}{(0,1)}{{$\otimes$}}{{$q\#$}}{{$q\#$}}{{$\#$}}{$t_{q\#}$}
\drawtileflattm{(1,0)}{(2,0)}{(2,1)}{(1,1)}{{$q\#$}}{{$a$}}{{$\ast$}}{{$q'a$}}{$t_{q\#}^{a}$}
\end{tikzpicture}
\caption{Tile types for the instructions $\delta_0\colon qs\mapsto q's'R$ and $\delta_0\colon q\#\mapsto q'\#R$}
\label{fig:5}
\end{figure}
%
If $\delta_0\colon qs\mapsto q's'L$, then define $t_{qs}$ and $t_{qs}^{a}$, for every $a\in\Sigma_0$, by
\begin{itemize}
\item 
$\leftsq t_{qs} = qs$,
$\rightsq t_{qs} = \ast$, 
$\upsq t_{qs} = s'$, 
$\downsq t_{qs} = qs$;
\item 
$\leftsq t_{qs}^{a} = \ast$,
$\rightsq t_{qs}^{a} = qs$, 
$\upsq t_{qs}^{a} = q'a$, 
$\downsq t_{qs}^{a} = a$, where $a\ne\#$;
\item 
$\leftsq t_{qs}^{\#} = \otimes$,
$\rightsq t_{qs}^{\#} = qs$, 
$\upsq t_{qs}^{\#} = q'\#$, 
$\downsq t_{qs}^{\#} = \#$,
\end{itemize}
see Figure~\ref{fig:6}. 

\pagebreak[3]

\begin{figure}
\centering
\begin{tikzpicture}[scale=2.1]
\drawtileflattm{(0,0)}{(1,0)}{(1,1)}{(0,1)}{{$\ast$}}{{$a$}}{{$qs$}}{{$q'a$}}{$t_{qs}^{a}$}
\drawtileflattm{(1,0)}{(2,0)}{(2,1)}{(1,1)}{{$qs$}}{{$qs$}}{{$\ast$}}{{$s'$}}{$t_{qs}$}
\end{tikzpicture}
\hspace{1em}
\begin{tikzpicture}[scale=2.1]
\drawtileflattm{(0,0)}{(1,0)}{(1,1)}{(0,1)}{{$\otimes$}}{{$\#$}}{{$qs$}}{{$q'\#$}}{$t_{qs}^{\#}$}
\drawtileflattm{(1,0)}{(2,0)}{(2,1)}{(1,1)}{{$qs$}}{{$qs$}}{{$\ast$}}{{$s'$}}{$t_{qs}$}
\end{tikzpicture}
\caption{Tile types for the instruction $\delta_0\colon qs\mapsto q's'L$}
\label{fig:6}
\end{figure}
\Rem{
\begin{figure}
\centering
\begin{tikzpicture}[scale=2.1]
\drawtileflattm{(0,0)}{(1,0)}{(1,1)}{(0,1)}{${\ast}{\ast}$}{$\otimes$}{${\ast}{\ast}$}{$\Box$}{$t_\Box^{\ast\ast}$}
\end{tikzpicture}
\hspace{1em}
\begin{tikzpicture}[scale=2.1]
\drawtileflattm{(0,0)}{(1,0)}{(1,1)}{(0,1)}{$\numeral{k}$}{$\otimes$}{$\numeral{k\,{+}\,1}$}{$|$}{$t_k^\otimes$}
\end{tikzpicture}
\hspace{1em}
\begin{tikzpicture}[scale=2.1]
\drawtileflattm{(0,0)}{(1,0)}{(1,1)}{(0,1)}{$\numeral{k}$}{$\otimes$}{${\ast}{\ast}$}{$\Box$}{$t_k^{\ast\ast}$}
\end{tikzpicture}
\caption{Tile types $t_\Box^{\ast\ast}$, $t_k^\otimes$, and $t_k^{\ast\ast}$}
\label{fig:7}
\end{figure}%
Let $t_1 = t_{q_1\#}$; it is the same for all Turing machines. Finally, let $T_M$ be the set of all tile types constructed by~$M$.

Now, we can simulate the initial configuration of~$M$ with the blank tape. Indeed, to simulate the configuration $C = q_0\#\Box\Box\Box\ldots{}$, take the row of tiles whose types are
$$
t_0^{\phantom{i}}, 
t_\Box^{\ast\ast}, t_\Box^{\ast\ast}, t_\Box^{\ast\ast}, \ldots{},
$$
and then
$$
\begin{array}{lcl}
C & = & 
\upsq t_0^{\phantom{i}}\ 
\upsq t_\Box^{\ast\ast}\ \upsq t_\Box^{\ast\ast}\ \upsq t_\Box^{\ast\ast}\ \ldots{},
\end{array}
$$
see Figure~\ref{fig:8}. 

\begin{figure}
\centering
\begin{tikzpicture}[scale=2.1]
\drawtileflattm{(0,0)}{(1,0)}{(1,1)}{(0,1)}{${\otimes}$}{${\otimes}$}{$\numeral{0}$}{$q_0\#$}{$t_0$}
\drawtileflattm{(1,0)}{(2,0)}{(2,1)}{(1,1)}{${\numeral{0}}$}{${\otimes}$}{$\numeral{1}$}{$|$}{$t_0^\otimes$}
\drawtileflattm{(3-.2,0)}{(4-.2,0)}{(4-.2,1)}{(3-.2,1)}{${\numeral{k\,{-}\,1}}$}{${\otimes}$}{$\numeral{k}$}{$|$}{$t_{k-1}^\otimes$}
\drawtileflattm{(4-.2,0)}{(5-.2,0)}{(5-.2,1)}{(4-.2,1)}{${\numeral{k}}$}{${\otimes}$}{${\ast}{\ast}$}{$\Box$}{$t_{k}^{\ast\ast}$}
\drawtileflattm{(5-.2,0)}{(6-.2,0)}{(6-.2,1)}{(5-.2,1)}{${\ast}{\ast}$}{${\otimes}$}{${\ast}{\ast}$}{$\Box$}{$t_{\Box}^{\ast\ast}$}
\node [] at (2.42,0.5) {$\cdots$};
\node [] at (6.2,0.5) {$\cdots$};
\end{tikzpicture}
\caption{Simulation of the initial configuration with input $\numeral{k}$}
\label{fig:8}
\end{figure}

We shall use the new tile types together with the tile types of ${T}_{M_0}$ to simulate the computations of $M_0$ with the inputs we choose. To this end, let
$$
\begin{array}{lcl}
T_n & = & {T}_{M_0}\cup\{t_{k}^\otimes : k<n\}\cup\{t_{n}^{\ast\ast}\}\cup\{t_{\Box}^{\ast\ast}\}.
\end{array}
$$
}

\begin{lemma}
\label{lem:TM}
There exists a unique $T_M$-tiling $f\colon\numN\times\numN\to T_M$ satisfying\/ \eqref{eq:T1},\/ \eqref{eq:T2}, and\/ \eqref{eq:T3}. Moreover, if $C_0,C_1,C_2,\ldots{}$ is the computation of $M$ on the blank tape, then
$$
\begin{array}{lcl}
C_k & = & \upsq f(0,k)\ \upsq f(1,k)\ \upsq f(2,k)\ \ldots,
\end{array}
$$
for every $k\in\numN$.
\end{lemma}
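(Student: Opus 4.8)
The plan is to prove existence, uniqueness, and the configuration correspondence simultaneously, by induction on the row index~$j$, showing that row~$j$ of any admissible tiling is completely forced and that its top edges spell precisely the configuration~$C_j$. Existence then comes for free, since the inductive description manufactures an explicit tiling; and since the looping modification of $\delta$ makes the computation $C_0,C_1,C_2,\ldots$ infinite, $C_k$ is defined for every $k\in\numN$.

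For the base case $j=0$, condition~\eqref{eq:T3} fixes $f(0,0)=t_0$. Since $\rightsq t_0 = {\ast}{\ast}$ and $t_\Box^{\ast\ast}$ is the only tile type with $\leftsq = {\ast}{\ast}$, horizontal matching~\eqref{eq:T1} forces $f(1,0)=t_\Box^{\ast\ast}$, and as $\rightsq t_\Box^{\ast\ast} = {\ast}{\ast}$ this propagates along the whole row, so $f(i,0)=t_\Box^{\ast\ast}$ for every $i\geqslant 1$. Reading the top edges gives $\upsq f(0,0)\,\upsq f(1,0)\,\ldots = q_0\#\,\Box\,\Box\ldots$, which is exactly the initial configuration~$C_0$ of $M$ on the blank tape.

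For the inductive step, suppose row~$j$ is forced and its top edges spell $C_j = a_0 a_1 a_2\ldots$, where $a_0\in\{\#,q\#\}$ and exactly one position~$p$ carries the head, say $a_p=qs$. By vertical matching~\eqref{eq:T2}, the bottom edges of row $j+1$ spell the same word $C_j$. I would first note that the head location is pinned: the only tile types with a state–symbol pair on the bottom edge are the primary instruction tiles $t_{qs}$ (equivalently $t_{q\#}$ at the boundary), so position~$p$ must carry such a tile, while every other position carries a tile whose bottom edge is a bare symbol, i.e.\ a pass-through $t_s^\ast$, the boundary tile $t_\#^\ast$, or a secondary instruction tile $t_{qs}^a$. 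Because $\delta$ is a function, the instruction $\delta\colon qs\mapsto q's'\Delta$—and hence the tile $t_{qs}$ together with its horizontal ``head-passing'' signal—is uniquely determined; a three-way case analysis on $\Delta$ then shows that exactly one secondary tile $t_{qs}^a$ is admitted—placed at $p+1$ if $\Delta=R$, at $p-1$ if $\Delta=L$, and none at all if $\Delta=S$—with its letter $a$ read off the bottom edge, while all remaining positions are forced to be pass-throughs, since a secondary tile elsewhere would require a $qs$-labelled horizontal edge with no neighbouring head tile to match it. The machine constraint $\Delta\ne L$ when $s=\#$ guarantees that a left move never occurs at the boundary, keeping the $\otimes$-marked left column consistent, and reading off the resulting top edges yields exactly $C_{j+1}$ by the very definition of the machine step.

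The routine but delicate part is this case analysis in the inductive step: one must check, in each of the three move directions and their $\#$-boundary sub-cases, that the horizontal head-passing labels $qs$ match on precisely one internal edge and that the $\otimes$ marker only ever sits in the left column—which holds because no tile type has $\rightsq = \otimes$, so $\leftsq = \otimes$ can only occur at $i=0$. I expect this bookkeeping, verifying~\eqref{eq:T1} at every junction while ruling out spurious secondary tiles, to be the main obstacle; the ``moreover'' claim on the configurations is then immediate from the induction.
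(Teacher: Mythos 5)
Your proposal is correct and matches the paper's intended argument exactly: the paper's entire proof is the single line ``Induction on~$k$,'' and your row-by-row induction---base row forced by \eqref{eq:T3} together with the uniqueness of the tile type with left colour ${\ast}{\ast}$, inductive step pinning the head position via the unique tile with a state--symbol pair on its bottom edge and then the three-way case analysis on $\Delta$ with the $\otimes$-boundary check---is precisely the omitted bookkeeping. You have simply supplied the details the paper leaves to the reader, so there is nothing to correct.
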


\pagebreak[2]

\begin{proof}
Induction on~$k$.
\end{proof}

\pagebreak[3]



To prove the next proposition, we have to define~$t_1$; let $t_1 = t_{q_1\#}$.

\begin{proposition}
Let $M$ be a Turing machine. Then, $M$ halts on the blank tape if, and only if, there exists a $T_M$-tiling satisfying \eqref{eq:T1}, \eqref{eq:T2}, \eqref{eq:T3}, and\/~\eqref{eq:T4}.
\end{proposition}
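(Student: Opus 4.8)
The plan is to lean on Lemma~\ref{lem:TM} and reduce the statement to a single combinatorial observation: in the unique tiling that encodes the computation, the leftmost tile of row $k$ is $t_1$ exactly when the $k$th configuration is a halting one. First I would record the decisive property of $t_1=t_{q_1\#}$. Since the only instruction applicable to $q_1\#$ is the fictitious looping instruction $\delta\colon q_1\#\mapsto q_1\# S$, the tile $t_{q_1\#}$ (coming from the $S$-case of the encoding) satisfies $\leftsq t_1 = \otimes$ together with $\upsq t_1 = \downsq t_1 = q_1\#$. Combining this with the encoding supplied by Lemma~\ref{lem:TM}, namely $C_k = \upsq f(0,k)\,\upsq f(1,k)\,\upsq f(2,k)\ldots$, I would argue that $f(0,k) = t_1$ if and only if the configuration $C_k$ begins with $q_1\#$, i.e.\ $M$ is in state $q_1$ scanning the leftmost cell $c_0$ --- which, by the normalisation that the scanned cell upon halting is $c_0$, is precisely the condition that $C_k$ is a halting configuration. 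One direction uses that $\otimes$ marks the left boundary, so $t_{q_1\#}$ is the only tile admissible at column $0$ whose top colour is $q_1\#$; the other direction is immediate from the colours of $t_1$.

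Granting this equivalence, the two implications are short. For the forward direction, suppose $M$ halts on the blank tape and let $C_{k_0}$ be the first halting configuration along the computation $C_0,C_1,\ldots$. Because the machine loops in its halting configuration (via the fictitious instruction), $C_k$ is a halting configuration for every $k\geqslant k_0$, whence $f(0,k)=t_1$ for all such $k$. Taking $j^\ast=\max\{k_0,1\}$, which lies in $\numNp$, gives $f(0,j^\ast+j)=t_1$ for every $j\in\numN$, so $f$ satisfies~\eqref{eq:T4}; as $f$ already satisfies \eqref{eq:T1}, \eqref{eq:T2}, and \eqref{eq:T3} by Lemma~\ref{lem:TM}, it is the desired tiling. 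For the converse, suppose some $T_M$-tiling satisfies \eqref{eq:T1}, \eqref{eq:T2}, \eqref{eq:T3}, and~\eqref{eq:T4}. By the uniqueness clause of Lemma~\ref{lem:TM} this tiling is $f$, and \eqref{eq:T4} supplies a $j^\ast\in\numNp$ with $f(0,j^\ast)=t_1$; by the equivalence above, $C_{j^\ast}$ is a halting configuration, so $M$ halts on the blank tape.

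The main obstacle I expect is the bookkeeping behind the equivalence between $f(0,k)=t_1$ and $C_k$ being halting, in particular confirming that no tile other than $t_{q_1\#}$ can occupy column $0$ with top colour $q_1\#$; this rests on the role of $\otimes$ as the left-edge marker and on the shapes of the instruction-tiles, and it is exactly where the modification forcing $\delta\colon q_1\#\mapsto q_1\# S$ is used to make the halting configuration persist. A minor technical point is the requirement $j^\ast\in\numNp$ rather than merely $j^\ast\in\numN$, which is harmless: once a halting configuration is reached it repeats forever, so any sufficiently large positive $j^\ast$ works.
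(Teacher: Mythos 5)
Your overall strategy coincides with the paper's: both proofs invoke the uniqueness clause of Lemma~\ref{lem:TM} to reduce everything to the unique computation-encoding tiling $f$, and both split on whether $M$ halts, using the fictitious instruction $\delta\colon q_1\#\mapsto q_1\#S$ to make the halting row repeat so that \eqref{eq:T4} holds, and using ``$M$ never enters the halting configuration'' for the converse. However, one step you single out as the crux is false as stated: $t_1=t_{q_1\#}$ is \emph{not} the only admissible tile at column $0$ with top colour $q_1\#$. Any instruction by which $M$ \emph{enters} the halting state with the head at $c_0$ produces another such tile: an $S$-instruction $\delta\colon q\#\mapsto q_1\#S$ with $q\ne q_1$ yields $t_{q\#}$ with $\leftsq t_{q\#}=\otimes$, $\upsq t_{q\#}=q_1\#$, $\downsq t_{q\#}=q\#$, and an $L$-instruction into $q_1$ yields $t_{qs}^{\#}$ with $\leftsq t_{qs}^{\#}=\otimes$, $\upsq t_{qs}^{\#}=q_1\#$, $\downsq t_{qs}^{\#}=\#$. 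Since the tile at $(0,k)$ has bottom colour equal to the first colour of $C_{k-1}$ (by vertical matching and Lemma~\ref{lem:TM}), the correct equivalence is: $f(0,k)=t_1$ iff $k\geqslant 1$ and $C_{k-1}$ begins with $q_1\#$. Your biconditional ``$f(0,k)=t_1$ iff $C_k$ is halting'' is therefore off by one row: at the first halting row $k_0$ (with $k_0\geqslant 1$) one has $\upsq f(0,k_0)=q_1\#$ but $f(0,k_0)$ is $t_{q\#}$ or $t_{qs}^{\#}$, not $t_1$.

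The consequence is local and easily repaired, but as written your forward direction fails: with $j^\ast=\max\{k_0,1\}$ and $k_0\geqslant 1$, condition \eqref{eq:T4} is violated at $j=0$ because $f(0,k_0)\ne t_1$. Taking $j^\ast=k_0+1$ fixes it, since from row $k_0+1$ onward the only tile whose bottom colour is $q_1\#$ comes from the unique instruction $\delta(q_1,\#)=(q_1,\#,S)$, namely $t_1$ itself. Your converse direction is unaffected, because it only uses the true half of the equivalence ($f(0,j^\ast)=t_1$ implies $\upsq f(0,j^\ast)=q_1\#$, so $C_{j^\ast}$ is halting), together with the uniqueness from Lemma~\ref{lem:TM} --- exactly as in the paper. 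With this one-row shift your argument becomes a correct, more detailed version of the paper's three-line proof (which glosses over the same bookkeeping with ``we readily obtain'').
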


\begin{proof}
By Lemma~\ref{lem:TM}, there exists a unique $T_M$-tiling $f\colon\numN\times\numN\to T_M$ satisfying\/ \eqref{eq:T1},\/ \eqref{eq:T2}, and\/ \eqref{eq:T3}. Let us consider two possible cases for~$M$.

Assume that $M$ halts on the blank tape on some step~$k$ of the computation. Then we readily obtain that $f(0,k+j)=t_1$, for every $j\in\numN$. Thus, it satisfies \eqref{eq:T4}, as well.

Assume that $M$ does not halt on the blank tape. Then the $T_M$-tiling does not satisfy \eqref{eq:T4}, since $M$ never enters into the final configuration.
\end{proof}

\begin{corollary}
The tiling problem defined by \eqref{eq:T1}, \eqref{eq:T2}, \eqref{eq:T3}, and\/~\eqref{eq:T4}\/ is $\Sigma^0_1$-hard.
\end{corollary}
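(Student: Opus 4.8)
The plan is to read the Corollary as an immediate consequence of the Proposition together with the classical undecidability of the halting problem. First I would recall that the \emph{blank-tape halting problem}---given a Turing machine $M$, decide whether $M$ halts when started on the blank tape---is $\Sigma^0_1$-complete, and in particular $\Sigma^0_1$-hard; this is standard and may be assumed. The strategy is then to exhibit the assignment $M\mapsto T_M$ as a computable many-one reduction of this problem to the tiling problem defined by \eqref{eq:T1}, \eqref{eq:T2}, \eqref{eq:T3}, and~\eqref{eq:T4}.

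Next I would verify that the assignment $M\mapsto T_M$ is effective. Given the finite data $\langle \Sigma, Q, q_0, q_1, \delta\rangle$ of $M$, the set $T_M$ is produced by a purely mechanical finite procedure: one always includes the machine-independent tiles $t_0$ and $t_\Box^{\ast\ast}$, the tiles $t_s^\ast$ and $t_{\#}^\ast$ for each $s\in\Sigma\setminus\{\#\}$, and then, for each instruction $\delta\colon qs\mapsto q's'\Delta$, the tiles dictated by the case $\Delta\in\{S,L,R\}$ as in Figures~\ref{fig:4}--\ref{fig:6}; finally one sets $t_1=t_{q_1\#}$. Since $\Sigma$, $Q$, and $\delta$ are finite and the case analysis is bounded, $T_M$ is a finite set computable uniformly from (a code of)~$M$.

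With these two observations in hand, the map $M\mapsto T_M$ is a computable (many-one) reduction. Indeed, by the Proposition, $M$ halts on the blank tape if, and only if, there exists a $T_M$-tiling satisfying \eqref{eq:T1}, \eqref{eq:T2}, \eqref{eq:T3}, and~\eqref{eq:T4}. Hence a decision procedure for the tiling problem would yield one for the blank-tape halting problem; equivalently, the reduction transports $\Sigma^0_1$-hardness from the halting problem to the tiling problem. I therefore conclude that the tiling problem defined by \eqref{eq:T1}--\eqref{eq:T4} is $\Sigma^0_1$-hard.

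There is no real obstacle at this last step: the entire mathematical content already resides in Lemma~\ref{lem:TM} and the Proposition---namely, the verification that the constructed tiles faithfully encode the computation and that condition~\eqref{eq:T4} captures the machine's reaching (and then looping in) the halting configuration. The only points that require a little care are the orientation and totality of the reduction: that $T_M$ is defined for \emph{every} machine $M$, not merely the halting ones, and that the equivalence in the Proposition is invoked in the halting-to-tiling direction. Both are immediate from the construction, so the Corollary follows directly.
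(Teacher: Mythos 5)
Your proposal is correct and matches the paper's intended argument exactly: the paper gives no separate proof of the Corollary, deriving it (as announced at the start of the Appendix) from the Proposition together with the $\Sigma^0_1$-completeness of the blank-tape halting problem via the computable map $M\mapsto T_M$. Your added care about effectivity and totality of the reduction is sound but routine, and nothing in your argument diverges from the paper's route.
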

 

\pagebreak[3]


\begin{thebibliography}{99}

\bibitem{ARSh:2023:arXiv}
I. Agadzhanian, M. Rybakov, and D. Shkatov.
\newblock Algorithmic complexity of monadic multimodal predicate logics with
  equality over finite {K}ripke frames.
\newblock {\em arXiv:2306.13559}, 2023.

\bibitem{Berger66}
R. Berger.
\newblock {\em The {U}ndecidability of the {D}omino {P}roblem}, volume~66 of
  {\em Memoirs of {A}{M}{S}}.
\newblock {A}{M}{S}, 1966.

\bibitem{BBJ07}
G.S. Boolos, J.P. Burgess, R.C. Jeffrey.
\newblock {\em Computability and Logic}.
\newblock Cambridge University Press, fifth edition, 2007.

\bibitem{BGG97}
E. B\"{o}rger, E. Gr\"adel, Y. Gurevich.
\newblock {\em The Classical Decision Problem}.
\newblock Springer, 1997.

\bibitem{CMRT:2022}
X. Caicedo, G. Metcalfe, R. Rodr\'{i}guez, O. Tuyt.
\newblock One-variable fragments of intermediate logics over linear frames.
\newblock {\em Information and Computation}, 287, 2022.

\bibitem{Church36}
A. Church.
\newblock A note on the ``{E}ntscheidungsproblem''.
\newblock {\em The Journal of Symbolic Logic}, 1:40--41, 1936.

\bibitem{GKWZ}
D. Gabbay, A. Kurucz, F. Wolter, M. Zakharyaschev.
\newblock {\em Many-{D}imensional Modal Logics: Theory and Applications},
  volume 148 of {\em Studies in Logic and the Foundations of Mathematics}.
\newblock Elsevier, 2003.

\bibitem{GShS}
D. Gabbay, V. Shehtman, D. Skvortsov.
\newblock {\em Quantification in Nonclassical Logic, Volume 1}, volume 153 of
  {\em Studies in Logic and the Foundations of Mathematics}.
\newblock Elsevier, 2009.

\bibitem{Gradel99}
E.~Gr\"{a}del.
\newblock On the restraining power of guards.
\newblock {\em Journal of Symbolic Logic}, 64(4):1719--1742, 1999.

\bibitem{GKV97}
E. Gr\"{a}del, P.G. Kolaitis, M.Y. Vardi.
\newblock On the decision problem for two-variable first-order logic.
\newblock {\em Bulletin of Symbolic Logic}, 3(1):53--69, 1997.

\bibitem{HWZ00}
I. Hodkinson, F. Wolter, M. Zakharyaschev.
\newblock Decidable fragments of first-order temporal logics.
\newblock {\em Annals of Pure and Applied Logic}, 106:85--134, 2000.

\bibitem{HWZ01}
I. Hodkinson, F. Wolter, M. Zakharyaschev.
\newblock Monodic fragments of first-order temporal logics: 2000--2001 {A}.{D}.
\newblock In R.~Nieuwenhuis and A.~Voronkov, editors, {\em Logic for
  Programming, Artificial Intelligence, and Reasoning. LPAR 2001.}, volume 2250
  of {\em Lecture Notes in Computer Science}, pages 1--23. Springer, 2001.

\bibitem{HC96}
G.E. Hughes, M.J. Cresswell.
\newblock {\em A New Introduction to Modal Logic}.
\newblock Routledge, 1996.

\bibitem{KKZ05}
R. Kontchakov, A. Kurucz, M. Zakharyaschev.
\newblock Undecidability of first-order intuitionistic and modal logics with
  two variables.
\newblock {\em Bulletin of Symbolic Logic}, 11(3):428--438, 2005.

\bibitem{Kripke62}
S.A. Kripke.
\newblock The undecidability of monadic modal quantification theory.
\newblock {\em Zeitschrift f\"{u}r Matematische Logik und Grundlagen der
  Mathematik}, 8:113--116, 1962.

\bibitem{LP98}
H.R. Lewis, C.H. Papadimitriou.
\newblock {\em Elements of the Theory of Computation}.
\newblock Prentice-Hall, 2nd edition, 1998.

\bibitem{MarxReynolds:1999}
M. Marx, M. Reynolds.
\newblock Undecidability of compass logic.
\newblock {\em Journal of Logic and Computation}, 9(6):897--914, 1999.

\bibitem{MMO65}
S. Maslov, G. Mints, V. Orevkov.
\newblock Unsolvability in the constructive predicate calculus of certain
  classes of formulas containing only monadic predicate variables.
\newblock {\em Soviet Mathematics Doklady}, 6:918--920, 1965.

\bibitem{Mortimer75}
M. Mortimer.
\newblock On languages with two variables.
\newblock {\em Zeitschrift f\"ur Mathematische Logik und Grundlagen der
  Mathematik}, pages 135--140, 1975.

\bibitem{RZ01}
M. Reynolds, M. Zakharyaschev.
\newblock On the products of linear modal logics.
\newblock {\em Journal of Logic and Computation}, 11(6):909--931, 2001.

\bibitem{MR:2001:LI}
M. Rybakov.
\newblock Enumerability of modal predicate logics and ascending chain
  conditions.
\newblock {\em Logical Investigations}, 9:155--267, 2001.
\newblock In Russian.

\bibitem{MR:2002:LI}
M. Rybakov.
\newblock On algorithmic expressibility of a modal language with only one
  one-place predicate.
\newblock {\em Logical Investigations}, 9:179--201, 2002.
\newblock In Russian.

\bibitem{Rybakov06}
M. Rybakov.
\newblock Complexity of intuitionistic and {V}isser's basic and formal logics
  in finitely many variables.
\newblock In Guido Governatori, Ian~M. Hodkinson, and Yde Venema, editors, {\em
  Advances in Modal Logic 6}, pages 393--411. College Publications, 2006.

\bibitem{Rybakov08}
M. Rybakov.
\newblock Complexity of intuitionistic propositional logic and its fragments.
\newblock {\em Journal of Applied Non-Classical Logics}, 18(2--3):267--292,
  2008.

\bibitem{MR:2017:LI}
M. Rybakov.
\newblock Undecidability of modal logics of unary predicate.
\newblock {\em Logical Investigations}, 23(2):60--75, 2017.
\newblock In Russian.

\bibitem{MR:2022:DoklMath}
M. Rybakov.
\newblock Computational complexity of theories of a binary predicate with a
  small number of variables.
\newblock {\em Doklady Mathematics}, 106(3):458--461, 2022.

\bibitem{MR:2023:LI}
M. Rybakov.
\newblock Binary predicate, transitive closure, two-three variables: shall we
  play dominoes?
\newblock {\em Logical Investigations}, 29(1):114--146, 2023.
\newblock In Russian.

\bibitem{MR:2024:IGPL}
M. Rybakov.
\newblock Predicate counterparts of modal logics of provability: High
  undecidability and {K}ripke incompleteness.
\newblock {\em Logic Journal of the IGPL}, 32(3):465--492, 2024.

\bibitem{MR:2024:Piter}
M. Rybakov.
\newblock Undecidability of logic {Q}{L}{C} in the language with two individual
  variables.
\newblock In {\em IV Conference of the Mathematical Centers of Russia.
  Abstracts}, pages 151--152, 2024.
\newblock In Russian.

\bibitem{RSh19SL}
M. Rybakov, D. Shkatov.
\newblock Undecidability of first-order modal and intuitionistic logics with
  two variables and one monadic predicate letter.
\newblock {\em Studia Logica}, 107(4):695--717, 2019.

\bibitem{RShJLC20a}
M. Rybakov, D. Shkatov.
\newblock Algorithmic properties of first-order modal logics of finite {K}ripke
  frames in restricted languages.
\newblock {\em Journal of Logic and Computation}, 30(7):1305--1329, 2020.

\bibitem{RSh20AiML}
M. Rybakov, D. Shkatov.
\newblock Algorithmic properties of first-order modal logics of the natural
  number line in restricted languages.
\newblock In Nicola Olivetti, Rineke Verbrugge, Sara Negri, and Gabriel Sandu,
  editors, {\em Advances in Modal Logic}, volume~13. College Publications,
  2020.

\bibitem{RShJLC21c}
M. Rybakov, D. Shkatov.
\newblock Algorithmic properties of first-order modal logics of linear {K}ripke
  frames in restricted languages.
\newblock {\em Journal of Logic and Computation}, 31(5):853--870, 2021.

\bibitem{RShJLC21b}
M. Rybakov, D. Shkatov.
\newblock Algorithmic properties of first-order superintuitionistic logics of
  finite {K}ripke frames in restricted languages.
\newblock {\em Journal of Logic and Computation}, 31(2):494--522, 2021.

\bibitem{RSh:2023:arXiv:lin:2021}
M. Rybakov, D. Shkatov.
\newblock Algorithmic properties of {Q}{K}4.3 and {Q}{S}4.3.
\newblock {\em arXiv:2307.02800}, 2023.

\bibitem{RShsubmitted}
M. Rybakov, D. Shkatov.
\newblock Algorithmic properties of modal and superintuitionistic logics of
  monadic predicates over finite frames.
\newblock {\em Journal of Logic and Computation}, 35(2):exad078, 2025.

\bibitem{RShsubmitted2}
M. Rybakov, D. Shkatov.
\newblock Variations on the {K}ripke trick.
\newblock {\em Studia Logica}, 113:1--48, 2025.

\bibitem{Sipser12}
M. Sipser.
\newblock {\em Introduction to the Theory of Computation}.
\newblock Cengage Learning, 3rd edition, 2013.

\bibitem{TG87}
A. Tarski, S. Givant.
\newblock {\em A Formalization of Set Theory without Variables}, volume~41 of
  {\em American Mathematical Society Colloquium Publications}.
\newblock American Mathematical Society, 1987.

\bibitem{vanDalen}
D. van Dalen.
\newblock {\em Logic and Structure}.
\newblock Springer, fifth edition, 2013.

\bibitem{Venema:1990}
Y. Venema.
\newblock Expressiveness and completeness of an interval tense logic.
\newblock {\em Notre Dame Journal of Formal Logic}, 31:529--547, 1990.

\bibitem{Visser81}
A. Visser.
\newblock A propositional logic with explicit fixed points.
\newblock {\em Studia Logica}, 40:155--175, 1981.

\bibitem{WZ01}
F. Wolter, M. Zakharyaschev.
\newblock Decidable fragments of first-order modal logics.
\newblock {\em The Journal of Symbolic Logic}, 66:1415--1438, 2001.

\bibitem{WZ02}
F. Wolter, M. Zakharyaschev.
\newblock Axiomatizing the monodic fragment of first-order temporal logic.
\newblock {\em Annals of Pure and Applied Logic}, 118(1--2):133--145, 2002.

\end{thebibliography}

\end{document}